\newcommand{\leqnos}{\tagsleft@true\let\veqno\@@leqno}
\newcommand{\reqnos}{\tagsleft@false\let\veqno\@@eqno}
\numberwithin{equation}{section}	
\newcommand{\ind}{{\sf 1}}
\newcommand{\bP}{\mathbf{P}}
\newcommand{\Pb}{\mathbf{P}}
\newcommand{\bE}{\mathbf{E}}
\newcommand{\Eb}{\mathbf{E}}
\newcommand{\bbP}{\mathbb{P}}
\newcommand{\bbE}{\mathbb{E}}
\newcommand{\Pbb}{\Pb_{\gb}}			
\newcommand{\Ebb}{\Eb_{\gb}}			
\newcommand{\Pbbx}{\Pb_{\gb,x}}			
\newcommand{\Ebbx}{\Eb_{\gb,x}}			
\newcommand{\Pbbzero}{\Pb_{\gb,0}}			
\newcommand{\Ebbzero}{\Eb_{\gb,0}}			
\newcommand{\Pbby}{\Pb_{\gb,y}}			
\newcommand{\Ebby}{\Eb_{\gb,y}}			
\newcommand{\Pbbz}{\Pb_{\gb,z}}			
\newcommand{\R}{\mathbb{R}}
\newcommand{\bbN}{\mathbb{N}}
\newcommand{\N}{\mathbb{N}}
\newcommand{\bbZ}{\mathbb{Z}}
\newcommand{\cA}{{\ensuremath{\mathcal A}}}
\newcommand{\cB}{{\ensuremath{\mathcal B}}}
\newcommand{\cC}{{\ensuremath{\mathcal C}}}
\newcommand{\cD}{{\ensuremath{\mathcal D}}}
\newcommand{\cE}{{\ensuremath{\mathcal E}}}
\newcommand{\cL}{{\ensuremath{\mathcal L}}}
\newcommand{\cM}{{\ensuremath{\mathcal M}}}
\newcommand{\cO}{{\ensuremath{\mathcal O}}}
\newcommand{\cQ}{{\ensuremath{\mathcal Q}}}
\renewcommand{\phi}{\varphi}
\newcommand{\ga}{\alpha}
\newcommand{\gb}{\beta}
\newcommand{\gga}{\gamma} 			
\newcommand{\gGa}{\Gamma}
\newcommand{\gd}{\delta}
\newcommand{\gep}{\varepsilon}		
\newcommand{\eps}{\varepsilon}
\newcommand{\gz}{\zeta}
\newcommand{\gh}{\eta}
\newcommand{\gk}{\kappa}
\newcommand{\gl}{\lambda}
\newcommand{\gL}{\Lambda}
\newcommand{\gr}{\rho}
\newcommand{\gs}{\sigma}
\newcommand{\gO}{\Omega}
\newcommand{\bh}{\textbf{\textit{h}}}
\newcommand{\pt}{\hspace{1pt}}
\newcounter{cst}[section]		
\newcounter{svf}[section]		
\newcommand{\cntc}{{\stepcounter{cst}\arabic{cst}}}		
\newtheorem{theorem}{Theorem}[section]
\newtheorem{proposition}[theorem]{Proposition}
\newtheorem{prop}[theorem]{Proposition}
\newtheorem{corol}[theorem]{Corollary}
\newtheorem{lemma}[theorem]{Lemma}
\newtheorem{lem}[theorem]{Lemma}
\newtheorem{claim}[theorem]{Claim}
\theoremstyle{definition}
\newtheorem{remark}[theorem]{Remark}
\newtheorem{rmq}[theorem]{Remark}
\numberwithin{equation}{section}			
\newcommand{\dd}{d}		
\newcommand{\sumtwo}[2]{\sum_{\substack{#1 \\ #2}}}
\renewcommand{\preceq}{\preccurlyeq}		
\renewcommand{\hat}{\widehat}
\renewcommand{\tilde}{\widetilde}
\newcommand{\ol}{\overline}
\newcommand{\ul}{\underline}
\newcommand{\uN}{{u_N}}
\newcommand{\twg}{\,\tilde\wedge\,}
\newcommand{\wet}{\mathrm{wet}}
\title[Surface transition in the collapsed phase of the IPDSAW along a hard wall]{Surface transition in the collapsed phase of 
a self-interacting walk  adsorbed along a hard wall}
\author{Alexandre Legrand}
\address{Universit\'e de Nantes, Laboratoire Jean Leray, 2, rue de la Houssini\`ere,44322 Nantes cedex 3, France}
\email{alexandre.legrand@univ-nantes.fr}
\author{Nicolas P\'etr\'elis}
\address{Universit\'e de Nantes, Laboratoire Jean Leray, 2, rue de la Houssini\`ere,44322 Nantes cedex 3, France}
\email{nicolas.petrelis@univ-nantes.fr}
\subjclass[]{Primary 60K35; Secondary 82B41}
\keywords{Polymer collapse, wetting, surface transition, large deviations, Brownian meander}
\definecolor{verde}{RGB}{0,146,70}
\begin{document}

\begin{abstract}

The present paper is dedicated to the 2-dimensional Interacting Partially Directed Self Avoiding Walk constrained to remain in the upper-half plane and interacting with the horizontal axis.
The model has been 
introduced in \cite{F90} to investigate the 
behavior of a homopolymer dipped in a poor solvent and adsorbed along a horizontal hard wall. It is known to undergo a \emph{collapse} transition between an \emph{extended} phase, 
inside which typical configurations of the polymer have a large horizontal extension (comparable to their total size), and a \emph{collapsed} phase inside which the polymer looks like a globule.

In the present paper, we establish rigorously that inside the collapsed phase, a \emph{surface} transition occurs between an \emph{adsorbed-collapsed} regime where the bottommost layer of the globule is pinned at the hard wall, and a \emph{desorbed-collapsed} regime where the globule wanders away from the wall. 
To prove the existence of this surface transition and exhibit its associated critical curve,  we display some sharp asymptotics of the partition function for a slightly simplified version of the model.

\end{abstract}

\maketitle

\let\thefootnote\relax\footnote{{\it Acknowledgements.}  The authors thanks the Centre Henri Lebesgue ANR-11-LABX-0020-01 for creating an attractive mathematical environment.

The authors are grateful to Stuart Whittington and Quentin Berger for fruitful discussions.}

\section*{Notation}
Let $\N$ be the set of positive integers, and $\N_0:=\N\cup\{0\}$.
Let $(a_L)_{L\leq 1}$ and $(b_L)_{L\leq 1}$ be two sequences of positive numbers. We will write that 
\begin{equation}\label{defequiv}
a_L\sim_{L\to \infty} b_L \quad \text{if} \quad \lim_{L\to \infty} a_L/b_L=1,
\end{equation}
and also that 
\begin{equation}\label{defcomp}
a_L\asymp b_L \quad \text{if} \quad  c_1\,  b_L\leq a_L\leq c_2\,  b_L\quad  \forall L\geq 1
\end{equation}
with $c_1, c_2$ two positive constants.



\section{Introduction}

In the present paper, we investigate a model for a $1+1$ dimensional polymer dipped in a poor solvent and simultaneously 
adsorbed along a horizontal hard wall. Although the model has attracted a continuous attention in the physics literature starting in the 90's (see e.g. \cite{JJ91}, \cite{F90},  \cite{F91}) 
until more recently (see e.g. \cite{MGSY02}, \cite{RGSY02} or \cite{PMB17}), it had, up to our knowledge, not been considered so far in the mathematical literature.  This model  interpolates between two
families of polymer models that have been entirely solved in the last 20 years, i.e., the wetting of a $1+1$-dimensional random walk adsorbed along a hard wall 
(see e.g \cite{dH09}, \cite{Giac07} and \cite{Giac11}) and the collapse transition of the $2$-dimensional Interacting Partially-Directed Self-avoiding Walk (IPDSAW) (see \cite{CNPT18} for a review).


The coupling parameters of the model are $\beta\in [0,\infty)$ the repulsion intensity between the monomers and the solvent around them ---or, equivalently, the attraction intensity in between monomers--- and $\delta\in [0,\infty)$ the interaction intensity between the monomers and the hard wall.    We will discuss in detail the phase diagram of the model in Section \ref{PHD} below, but let us mention already that
the phase diagram is divided into two main phases:
\begin{itemize}
\item $\cE$: an Extended phase inside which a typical trajectory has a finite vertical width and a macroscopic horizontal extension,
\item $\cC$: a Collapsed phase inside which the vertical width and the horizontal extension of a typical configuration are comparable.
\end{itemize}
It turns out that $\cE$ can be divided into two sub-phases. A critical curve is indeed conjectured to partition
$\cE$ into a Desorbed-Extended phase ($\cD\cE$) inside which the polymer wanders away from the hard wall 
and an Adsorbed-Extended  phase ($\cA\cE$) inside which the polymer is localized along the wall (see e.g. \cite[Figure 2]{F90}). The situation is more subtle
in the Collapsed phase where typical configurations look roughly like a globule (see Fig. \ref{Fig:phase-diag} (B)). The number of contact between this globule and the hard-wall changes drastically inside~$\cC$ along some other critical curve which triggers what physicists call a surface transition, that is a loss of analyticity of the second order term of the exponential development of the partition function, whereas the leading order term (i.e., the free energy) remains linear. 


%
%

The aim of our paper is to investigate the collapsed phase and in particular the surface transition mentioned above. To that aim, we will introduce in Section \ref{sec2} a simplified version of our model called the \emph{one-bead model}. In a few words (see Section \ref{onebt} for more details) every trajectory considered in our model can be decomposed into a family of sub-trajectories called beads. Those beads are typically of finite size in $\cE$ but are much larger inside $\cC$. We can even safely conjecture that inside $\cC$, a typical trajectory is made of a unique macroscopic bead (this is proven e.g. for the 2-dimensional IPDSAW in \cite{LP20}). For this reason, we will restrict the set of allowed paths to those forming only one bead. This restricted version of the model turns out to be more tractable and should share many features with its non-restricted counterpart.


Let us give a short outline of the paper. In Section \ref{pdia} below, we begin with a rigorous definition of the model and then we provide a qualitative description of its phase diagram. With Theorem \ref{Phase-diag} we identify rigorously the Collapsed phase~($\cC$) and the Extended phase~($\cE$).
 Section \ref{sec2} is dedicated to the definition of the single-bead version of the model.  Theorem~\ref{th:partfunbead}, which is the most important result of the paper, is stated in Section \ref{sec3}  and allows us to characterize 
 the surface transition with the help of sharp asymptotic developments of the partition function inside $\cC$. 
 We prove Theorem~\ref{th:partfunbead} (along with Corollary~\ref{corol:partfunbead}), in Section~\ref{sec:prth22}. We delay the proofs of Theorem~\ref{Phase-diag} and Proposition~\ref{Phase-diag:bead} to Section~\ref{sec:prth11} for they are quite standard (apart from the random walk representation introduced in Section~\ref{sec:prth22}). We then collect the proofs of technical estimates in Section~\ref{sec:tech}. Appendix~\ref{app:wetting} provides well-known results on the wetting model, and Appendix~\ref{app:FKG} displays a (conditional) {\rm FKG} inequality on random walks with distribution $\Pbb$ (defined in \eqref{def:Pgb} below).

 \section{Description of the model and phase diagram}\label{pdia}
 

\subsection{The model}
For a polymer of length $L\in \N$, the set of its allowed configurations is denoted by $\gO_L^{+}$ and consists of those trajectories of a $1+1$-dimensional self-avoiding random walk on $\mathbb{Z}^2$ taking unitary steps up, down and to the right and constrained to remain above the horizontal axis $y=0$. An alternative representation of such trajectories can be given by decomposing them according to their number of horizontal/rightward steps, and the length and orientation of the vertical stretches in between, i.e.,

\begin{equation}\label{def:gO+}
\gO_L^{+}:=\bigcup_{N\geq1} \cL_{N,L}^{+}:=\bigcup_{N\geq1}\bigg\{ (\ell_i)_{i=1}^{N}\in\bbZ^{N}\; ; \sum_{i=1}^{N}|\ell_i|=L-N\;, \sum_{i=1}^k\ell_i\geq0, \,\forall\pt k\leq N
\bigg\}.
\end{equation}

Henceforth, we will only use this latter representation and we note that  each vertical stretch is followed by a horizontal step ---in particular we assume that all trajectories end with a horizontal step. 
For every $\ell\in \Omega_L^+$, we  denote by $N_\ell$ its horizontal extension (i.e.,  its number of horizontal step) so that  $\ell\in \cL^+_{N_\ell,L}$.

\begin{figure}[h]
 \centering
 \includegraphics[width=0.7\linewidth]{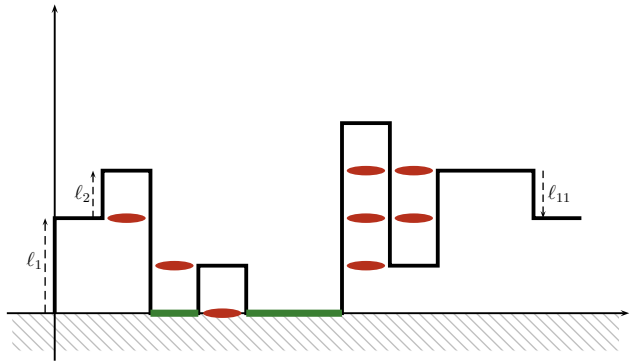}
 \caption{\footnotesize Representation of a polymer of length $L=29$, with horizontal extension $N=11$. Each self-touching (in red) is rewarded with an energy $\gb$, and each contact with the wall (in green) is rewarded with $\delta$.}
 \label{fig:polymer}
\end{figure}

With each configuration we associate a Hamiltonian, which takes into account that monomers are both attracting each other  and 
 adsorbed along the $x$-axis. To be more specific, a given $\ell\in \gO_L^+$  is assigned an energetic reward $\beta\geq 0$ for every {\emph{self-touching}} (i.e., a pair of neighboring sites visited non-consecutively by $\ell$) and 
 an energetic reward $\delta\geq 0$ for every contact with the $x$-axis (see Fig.~\ref{fig:polymer}). Thus, define
\begin{equation}\label{def:Ham}
H(\ell)\;:=\;\gb \sum_{i=0}^{N_\ell} \ell_i\twg \ell_{i+1}\,+\,\gd\sum_{k=1}^{N_\ell}\ind_{\{\sum_{i=1}^k\ell_i=0\}}
\end{equation} 
where the operator $\twg$ is defined for any $x,y\in\bbZ$ by $x\twg y:=\min\{|x|,|y|\} \ind_{\{xy\leq 0\}}$, and where we set $\ell_0=\ell_{N_\ell+1}=0$ for notational convenience. 
At this stage we introduce the polymer measure $\bP_{L}^{\beta,\delta}$, a probability on $\Omega_L^+$ defined as 
\begin{equation}\label{def:polmes}
\bP_{L}^{\beta,\delta}(\ell)=\frac{e^{{H(\ell)}}}{Z_{L,\gb,\gd}^{+}}, \quad \ell \in \Omega_L^+,
\end{equation}
where $Z_{L,\gb,\gd}^{+}$ is a normalisation term called \emph{partition function} of the system. The \emph{free energy} provides the exponential growth rate of $Z_{L,\gb,\gd}^{+}$ in $L$. It is defined as  
\begin{equation}\label{def:freeen}
f(\beta,\delta)=\lim_{L\to\infty} \frac{1}{L} \log Z_{L,\gb,\gd}^{+}\;,
\end{equation}
(we will show that $f$ is well-defined in the proof of Theorem~\ref{Phase-diag}).

\begin{remark}\label{compasm}
The present model may be seen as an advanced version of IPDSAW, that was introduced in \cite{ZL68}.  For the latter model, there is no hard-wall preventing the 
polymer to enter the lower half-plane and also no wetting interaction with the hard-wall. As a consequence, the allowed configurations of IPDSAW are obtained by relaxing the 
constraint $\sum_{i=1}^k\ell_i\geq 0$ in \eqref{def:gO+} and the Hamiltonian by removing the term $\gd\sum_{k=1}^{N}\ind_{\{\sum_{i=1}^k\ell_i=0\}}$  in \eqref{def:Ham}.
\end{remark}


\subsection{Phase diagram}\label{PHD}
Since the coupling parameters $\beta$ and $\delta$ are both non-negative, the phase diagram is drawn on the first quadrant $\cQ:=[0,\infty)^2$. Similarly to what is observed for IPDSAW (see e.g. \cite{NGP13} or \cite{CNPT18}), the phase diagram
 can be divided into a collapsed phase $\cC$ inside which typical trajectories undergo a self-touching saturation and an extended phase~$\cE$ inside which the horizontal extension of a typical trajectory is comparable to its total size. To be more specific, inside $\cC$, we expect that  
a typical trajectory of length $L$ (i.e., $\ell$ sampled from $\bP_L^{\beta,\delta}$) satisfies $H(\ell)=\beta L+o(L)$. For this reason 
its horizontal extension must be small (i.e., $N_\ell=o(L)$) and its vertical stretches should be long with alternating signs. In~$\cE$, in turn, a typical trajectory is expected to be composed of $O(L)$ vertical stretches of finite length (which is also what is expected at $\beta=0$). 

Let us now briefly explain (with three simple observations)  why the free energy $f(\beta,\delta)$ is equal to $\beta$ in $\cC$. First, $f(\beta,\delta)\geq \beta$ for every $(\delta,\beta)\in \cQ$. This inequality derives from restricting the computation of $Z^+_{L,\beta,\delta}$ to a unique trajectory $\tilde \ell \in \cL_{\sqrt{L},L}$ given by 
\begin{equation}\label{deflti}
\tilde \ell_i=(-1)^{i-1} (\sqrt{L}-1)\quad \text{ for every} \quad i\in \{1,\dots,\sqrt{L}\}
\end{equation}
(we assume $\sqrt L\in \N$ for conciseness) so that 
$H(\tilde\ell)=\beta (\sqrt{L}-1)^2+ \delta (\sqrt{L}-1)$. Second, those trajectories in $\Omega_L$ that are performing a self-touching saturation are not many and therefore they do not carry any entropy.  Third, we mentioned above that such saturated trajectories are made of $o(L)$ stretches and a trajectory may touch the hard-wall at most once per vertical stretch (recall Fig.~\ref{fig:polymer}), hence their interactions with the wall cannot be numerous enough to contribute to the free energy. These three points are sufficient to understand why the free energy equals $\beta$ in $\cC$ and thus, it is natural to define the \emph{excess} free energy of the system as
\begin{equation}\label{excessfreeen}
\tilde f(\beta,\delta)=f(\beta,\delta)-\beta,
\end{equation}
which allows us to define the 
 \emph{extended} and the \emph{collapsed}  phases as  
\begin{align}
\notag \cC&:=\{(\beta,\delta)\in \cQ\colon \tilde f(\beta,\delta)=0\},\\\label{def:cCcE}
\cE&:=\cQ\setminus\cC=\{(\beta,\delta)\in \cQ\colon \tilde f(\beta,\delta)>0\}.
\end{align}
Before stating Theorem \ref{Phase-diag} below, 
we need to settle some notations. We define for any $\gb>0$ the following probability distribution on $\bbZ$:
\begin{equation}\label{def:Pgb}
\Pb_\gb(\,\cdot=k)=\frac{e^{-\frac{\gb}{2}|k|}}{c_\gb}\quad,\qquad c_\gb:=\sum_{k\in\bbZ} e^{-\frac{\gb}{2}|k|} = \frac{1+e^{-\gb/2}}{1-e^{-\gb/2}}.
\end{equation}
We consider a one-dimensional random walk $X:=(X_i)_{i\geq 0}$ starting from the origin and such that
 $(X_{i+1}-X_i)_{i\geq 0}$ is an  i.i.d. sequence of random variables with law $\Pb_\gb$. Then, we let $h_\beta(\delta)$ be the free energy of 
 the wetting model that consists of the random walk $X$ constrained to remain non-negative and to finish on the $x$-axis, and pinned at the origin by an
 energetic factor $\delta$, i.e., 
 \begin{equation}\label{wetmod}
 h_\beta(\delta)=\lim_{N\to \infty} \frac{1}{N} \log \Ebbzero\Big[e^{\delta \sum_{i=1}^N 1_{\{X_i=0\}}} 1_{\{X\in B_N^{0,+}\}}\Big], 
 \end{equation} 
where $B^{0,+}_N$ is the set of non-negative trajectories of length $N$ ending at $0$ ---more generally, define for all $y\geq0$,
\begin{equation}\label{def:By+}
B^{y,+}_n\;:=\; \big\{(X_k)_{k=1}^n \in\bbZ^{n}\,;\, X_n=y\,, X_k\geq 0 \;\forall\,1\leq k\leq n\big\}.
\end{equation}
An explicit formula for $h_\gb(\gd)$ is given in Appendix~\ref{app:wetting} (see \eqref{eq:explicit:h}). 
We also define $\gGa_\gb:=c_\gb e^{-\gb}$ which is decreasing in $\gb$, and $\gb_c>0$ the unique solution of the equation $\gGa_\gb=1$.

\begin{theorem}\label{Phase-diag}
The boundary between the collapsed and the extended phase can be characterized explicitly, i.e., 
\begin{align}
\nonumber \cC&=\{(\beta,\delta)\in \cQ\colon \beta\geq \beta_c, \delta \leq \delta_c(\beta)\},
\end{align}
where, for every $\beta\geq \beta_c$, the quantity $\delta_c(\beta)$ is the unique solution in $\gd$ of 
\begin{equation}\label{eq:cc}
\log \Gamma_\beta+h_\beta(\delta)=0,
\end{equation}
which yields the following analytic expression,
\begin{equation}\label{eq:cc2}
\delta_c(\beta) = \log \bigg(  \frac{\sinh(\beta) + \sqrt{\sinh(\beta)^2 +1 - e^{\beta} }}{1-e^{-\beta}}\bigg)\,.
\end{equation}

\end{theorem}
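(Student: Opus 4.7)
\emph{Existence and lower bound.} My first task is to show that the limit $f(\gb,\gd)$ exists and that $f\geq\gb$ (so that $\tilde f\geq 0$ on $\cQ$). The existence is obtained by a standard super-multiplicativity argument: two trajectories of lengths $L_1,L_2$ can be concatenated into a valid trajectory of length $L_1+L_2+O(1)$ whose Hamiltonian is at least the sum of the two, by joining them through a descending stretch reaching the wall; combined with a trivial exponential upper bound $Z_{L,\gb,\gd}^+\leq e^{(\gb+\gd+\log C)L}$ coming from $|\gO_L^+|\leq C^L$ and $H(\ell)\leq(\gb+\gd)L$, this gives the convergence in \eqref{def:freeen}. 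The bound $f\geq \gb$ is then obtained by restricting $Z_{L,\gb,\gd}^+$ to the single trajectory $\tilde\ell$ of \eqref{deflti}, which satisfies $H(\tilde\ell)=\gb(\sqrt L-1)^2+\gd(\sqrt L-1)$.

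\emph{Random walk representation.} The heart of the proof is a random walk representation of $Z_{L,\gb,\gd}^+$ in terms of a walk with increments distributed as $\Pbb$. Applying the identity $x\twg y=\tfrac12(|x|+|y|-|x+y|)$ together with $\ell_0=\ell_{N+1}=0$ and $\sum_{i=1}^N|\ell_i|=L-N$ rewrites the Hamiltonian as
\[
H(\ell)=\gb L-\gb N-\tfrac{\gb}{2}\sum_{i=0}^N|\ell_i+\ell_{i+1}|+\gd\sum_{k=1}^N\ind_{\{S_k=0\}}.
\]
The involution $V_i:=(-1)^i\ell_i$ preserves the boundary values $V_0=V_{N+1}=0$ and satisfies $|\ell_i+\ell_{i+1}|=|V_{i+1}-V_i|$ and $|\ell_i|=|V_i|$; combined with $e^{-\gb|k|/2}=c_\gb\Pbb(\cdot=k)$ and $e^{-\gb N}c_\gb^{N+1}=c_\gb\,\gGa_\gb^N$, the partition function takes the form
\[
Z_{L,\gb,\gd}^+=e^{\gb L}\,c_\gb\sum_{N\geq 1}\gGa_\gb^N\,\Ebbzero\!\Big[e^{\gd\sum_{k=1}^N\ind_{\{\tilde S_k=0\}}}\ind_{\{V_{N+1}=0,\ \tilde S_k\geq 0\,\forall k,\ \sum_i|V_i|=L-N\}}\Big],
\]
where $\tilde S_k:=\sum_{i=1}^k(-1)^iV_i$ is the image of the height process under the involution and the expectation is taken under the law of a $\Pbb$-random walk $V$ issued from $0$. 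The length constraint $\sum_i|V_i|=L-N$ is soft---a typical $\Pbb$-walk of length $N$ has total variation of order $N$---so it contributes only polynomial factors in $L$; the remaining factor is, up to polynomials, a wetting partition function of length $N$ with pinning reward $\gd$ per contact with level $0$, which grows exponentially with rate $h_\gb(\gd)$ (Appendix~\ref{app:wetting}).

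\emph{Identification of the critical curve, explicit formula, and main obstacle.} Each term of the $N$-sum thus behaves, up to subexponential corrections in $L$, as $\bigl(\gGa_\gb\, e^{h_\gb(\gd)}\bigr)^N$. When $\log\gGa_\gb+h_\gb(\gd)\leq 0$---the candidate collapsed region---the series over $N$ is subexponential in $L$, giving $\tilde f\leq 0$. When this quantity is positive, which happens either because $\gb<\gb_c$ (so $\gGa_\gb>1$ and $h_\gb(\gd)\geq 0$) or because $\gb\geq\gb_c$ and $\gd>\gd_c(\gb)$, one selects $N$ of order $L$ where the length constraint is comfortably realised and extracts a contribution $e^{cL}$ with $c>0$, hence $\tilde f>0$. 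The explicit expression \eqref{eq:cc2} then follows by injecting the critical relation $h_\gb(\gd_c)=-\log\gGa_\gb=\gb-\log c_\gb$ into the closed-form formula \eqref{eq:explicit:h} for the wetting free energy, and simplifying using $c_\gb=(1+e^{-\gb/2})/(1-e^{-\gb/2})$; the monotonicity and continuity of $\gd\mapsto h_\gb(\gd)$ guarantee uniqueness of $\gd_c(\gb)$. The principal technical obstacle is that $\tilde S_k=\sum_{i\leq k}(-1)^iV_i$ is \emph{not} a Markovian functional of $V$, so the reduction to a standard wetting problem is not formal. This is handled by decomposing trajectories along their bead structure (each bead being an excursion of $S$ above the wall) and analysing each bead separately---the very motivation for the one-bead simplification of Section~\ref{sec2} and the random walk representation developed in Section~\ref{sec:prth22}.
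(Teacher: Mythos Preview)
Your representation via the single involution $V_i=(-1)^i\ell_i$ is the classical IPDSAW trick, and you correctly identify its fatal defect here: the wall constraint $\sum_{i\leq k}\ell_i\geq 0$ becomes a condition on the alternating partial sums $\tilde S_k=\sum_{i\leq k}(-1)^iV_i$, which is \emph{not} a Markovian functional of the $\Pbb$-walk $V$. You acknowledge this as the ``principal technical obstacle'' but do not actually overcome it; the appeal to a bead decomposition is vague and, in fact, is not how the paper proceeds for this theorem.

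The paper's resolution is a genuinely different representation: rather than one walk, it introduces \emph{two} independent $\Pbb$-walks $S$ and $I$ via the odd and even partial sums $S_k=\sum_{i=0}^{2k-1}\ell_i$, $I_k=\sum_{i=0}^{2k}\ell_i$. The Boltzmann weight $\prod_i e^{-\frac{\gb}{2}|\ell_i+\ell_{i+1}|}$ then factorises exactly into independent $\Pbb$-increments for $S$ and for $I$, and the wall constraint becomes simply $S_k\geq 0$ and $I_k\geq 0$, each Markovian. The length constraint $\sum|\ell_i|=L-N$ becomes a geometric-area constraint $G_N(S,I)=L-N$; the paper handles it not by arguing it is ``soft'' but by passing to the generating function $\sum_L \tilde Z_L^{+,c}e^{-\gga L}$, which turns the constraint into an exponential penalty $e^{-\gga G_N(S,I)}$. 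On the critical curve $\gga=0$ this penalty vanishes, $S$ and $I$ decouple into two independent wetting models, and one reads off $\log\gGa_\gb+h_\gb(\gd)=0$.

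A secondary issue: your claim that the constraint $\sum_i|V_i|=L-N$ is soft because ``a typical $\Pbb$-walk of length $N$ has total variation of order $N$'' confuses two quantities. The total variation $\sum_i|V_i-V_{i-1}|$ is indeed of order $N$, but the constraint is on $\sum_i|V_i|=\sum_i|\ell_i|$, the area under $|V|$, which is of order $N^{3/2}$ for a random walk. So even granting your representation, this constraint would not contribute only polynomial factors without further work.
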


\begin{remark}
Note that our formula for the critical curve in \eqref{eq:cc2} was already conjectured in  \cite[equation 19]{F91} or \cite[equation 21]{IF91} (both expressions 
coincide provided we set $\kappa=e^{\delta_c(\beta)}$ and $\tau=e^{\beta}$). In \cite{F91} and \cite{IF91}, the heuristics supporting this formula 
are based on some additional assumption and on a computation of the grand canonical with the help of transfer matrix. 
\end{remark}

\subsubsection{Discussion} 
Let us further explain the phenomenon behind the existence of a surface transition inside $\cC$ that physicists have conjectured  (see e.g.  \cite[Fig. 2]{MGSY02}).
As mentioned above, a typical trajectory in the collapsed phase looks like a globule delimited by a lower envelope and an upper envelope (see their rigorous definition in \eqref{defenv}).
For $L\in \N$, $\beta>\beta_c$ and $\delta <\delta_c(\beta)$, we will prove in Section \ref{S2} that the lower envelope of a trajectory sampled from $\bP_{L}^{\beta,\delta}$ behaves roughly as a random walk of length $O(\sqrt{L})$, constrained to remain non-negative,
and pinned at the $x$-axis (hard wall) with intensity $\delta$. This leaves us with a wetting model whose critical point $\tilde \delta_c(\beta)$ can be explicitly computed, see \eqref{def:deltatilde} (and it satisfies $\tilde \delta_c(\beta)< \delta_c(\beta)$). Thus, when $\delta\leq\tilde \delta_c(\beta)$ the lower-envelope touches the hard-wall only $o(\sqrt{L})$ times, whereas when $\delta>\tilde \delta_c(\beta)$ it remains localized along the hard wall and touches it $O(\sqrt L)$ times. 
As a consequence, this wetting transition of  the lower envelope is not encoded in the excess free energy $\tilde f(\beta,\delta)$ (which remains equal to $0$ in $\cC$) simply because the number of contact between the polymer (or equivalently its  lower envelope) and the hard-wall is at most  $O(\sqrt L)$. To be more specific,  we will see with  Theorem \ref{th:partfunbead} below that, in the exponential 
growth rate of $Z^+_{L,\beta,\delta}$, $\delta\in(\tilde \gd_c(\gb),\gd_c(\gb))$ only contributes to the second order term. We will further discuss the behavior of a typical lower envelope in~$\cC$ after stating Theorem~\ref{th:partfunbead}.

In order to display a qualitative picture of the phase diagram (see Fig.~\ref{Fig:phase-diag}), let us end this discussion with a few words about the extended 
regime $\cE$, where a typical trajectory of length $L$   is expected to have an horizontal extension of order $L$.  Physicists (see \cite[Fig.2]{F91} or \cite{IF91}) have conjectured that another critical curve $\hat\delta_c: [0,\beta_c]\to \R^+$ divides $\cE$ into a Desorbed-Extended phase denoted by $\cD\cE$ and an  Adsorbed-Extended phase denoted by 
$\cA\cE$ but there is so far no guess for what the value of $\hat \delta_c (\beta)$ could be.

\begin{figure}[h]
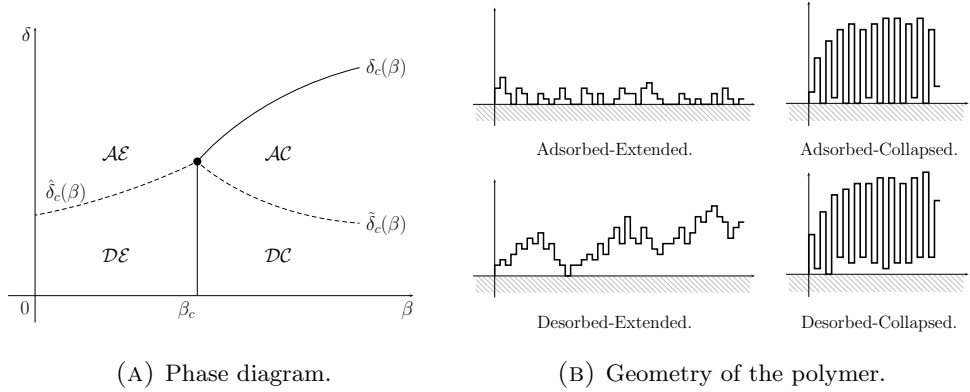

 \centering
  \begin{subfigure}[b]{0.4\linewidth}
    \includegraphics[width=\linewidth]{phasediag.pdf}
    \caption{\footnotesize Phase diagram.}
  \end{subfigure}
  \begin{subfigure}[b]{0.45\linewidth}
    \includegraphics[width=\linewidth]{polymerphasediag.pdf}
    \caption{\footnotesize Geometry of the polymer.}
  \end{subfigure}
  \caption{\small Qualitative picture of the phase diagram and the geometry of the polymer in each phase.
 In this paper, we rigorously determine the critical line $\delta_c(\beta)$ between the extended and collapsed phases, and we characterize the surface transition critical line $\tilde  \delta_c(\beta)$.
No analytic expression has been conjectured for the critical line $\hat \delta_c(\beta)$ yet. 
  }
  \label{Fig:phase-diag}
\end{figure}

\begin{figure}
\end{figure}

\section{Inside the collapsed phase: restriction to the single-bead model. Asymptotics of the partition functions}\label{sec2}

\subsection{Bead decomposition of a trajectory}\label{onebt}

A trajectory of $\gO_L^{+}$ can be decomposed into a collection of sub-trajectories called beads. A bead is a succession of non-zero vertical stretches with alternating signs, which ends when two consecutive stretches have the same orientation, or when a stretch has length zero. To be more specific, we consider $\ell\in \Omega_L^+$ and we recall that $N_\ell$ is its horizontal extension and that by convention $\ell_0=\ell_{N_\ell+1}=0$. 
Then, we set  $x_0=0$ and for $j\in \N$ such that $x_{j-1}<N_\ell$ we set 
\[x_j=\inf\{i\geq x_{j-1}+1\colon\, l_i\;\tilde{\wedge}\;l_{i+1}=0\}\]
so that $x_j$ is the index of the last vertical stretch composing the $j$-th bead of $\ell$. Finally, we let $n(\ell)$ be the number of beads in $\ell$, in particular it satisfies $x_{n(\ell)}=N_\ell$. 
Thus, we can decompose any trajectory $\ell \in \Omega_L$ into a succession of beads denoted by $\cB_j$ with $j\in \{1,\dots,n(\ell)\}$, as follows
 \begin{equation}\label{beads}
\ell=\cup_{j=1}^{n(\ell)} \cB_j:=\cup_{j=1}^{n(\ell)} \{\ell_{x_{j-1}+1},\dots,\ell_{x_j}\}.
 \end{equation}

A key issue concerning the collapsed phase of our model consists in showing that a typical trajectory contains a unique macroscopic bead outside which very few monomers are laying. Such result was derived for IPDSAW in \cite{CNP16}
 and recently improved in \cite{LP20}. Its proof  
requires some sharp asymptotics of the partition function restricted to single-bead trajectories (i.e., trajectories consisting of one bead only, see Section \ref{defbead} below for a definition). Such result is also very useful since it tells us that, inside its collapsed phase, the model should share many features with its single-bead counterpart. 
In particular, we expect that the geometric description of a typical path under the single-bead version of the model remains valid under the unrestricted model. This should simplify substantially the investigation of $\cC$. 
 
\begin{figure}[h]
 \centering
  \includegraphics[width=0.58\linewidth]{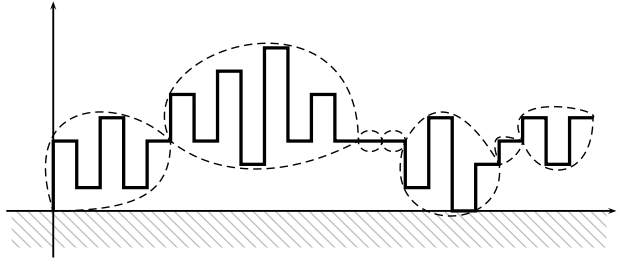}
  \caption{\footnotesize Decomposition of a trajectory into a succession of seven beads. For all $\ell\in\gO_L^{+}$ and $1\leq i\leq N_\ell$, a new bead starts at the $i$-th vertical stretch if and only if $\ell_{i-1}\ell_i\geq0$ (where we recall $\ell_0=\ell_{N_\ell+1}=0$).}
  \label{fig:beaddecompo}
\end{figure}

\subsection{Single-bead restriction of the model}\label{defbead} Let $L\in\N$, and define $\gO_L^{\,\circ,+}$ the subset of $\gO_L^+$ gathering those trajectories $\ell$ constrained to form only one ``bead'' ---all its stretches are of non-zero length and alternate orientations--- and to come back to the wall with its last stretch (in particular its horizontal extension $N_\ell$ must be even). That is,
\begin{equation}\label{def:gOxy+}
\gO_L^{\,\circ,+}:=\bigcup_{N\geq1} \cL_{N,L}^{\,\circ,+}:=\bigcup_{N\geq1}\left\{  \begin{aligned}&(\ell_i)_{i=1}^{2N}\in\bbZ^{2N}\; ; \sum_{i=1}^{2N}|\ell_i|=L-2N\;,\; \sum_{i=1}^{2N}\ell_i=0\;, \\
&\sum_{i=1}^k\ell_i\geq0,\, \forall k\leq 2N\;,\; \ell_i\ell_{i+1} <0, \forall\,1\leq i< 2N
\end{aligned}\right\}.\end{equation}
The partition function restricted to such trajectories becomes:
\begin{equation}\label{def:zxy+}
Z_{L,\gb,\gd}^{\,\circ,+}:= \sum_{N=1}^{L/2}\sumtwo{\ell\in\cL_{N,L}^{\,\circ,+}}{\ell_0=\ell_{2N+1}=0} e^{H(\ell)},
\end{equation}
(recall that $\ell_0=\ell_{2N+1}=0$  for notational convenience).

\subsection{Surface transition: asymptotics of the single-bead partition function
inside the collapsed phase} \label{sec3}

The single-bead model undergoes the same phase transition as the full model, albeit its critical curve differs slightly. Let us define $f^{\pt\circ}$ and $\tilde f^{\pt\circ}$ respectively the free energy and excess free energy of the single-bead model,
\[
f^{\pt\circ}(\gb,\gd):=\lim_{L\to\infty} \log Z^{\,\circ,+}_{L,\gb,\gd} \qquad\text{ and }\qquad \tilde f^{\pt\circ}(\gb,\gd) := f^{\pt\circ}(\gb,\gd) -\gb\;.
\]
Since $\tilde \ell$ forms a single bead (recall \eqref{deflti}), it follows that $\tilde f^{\pt\circ}(\gb,\gd)\geq0$ for all $(\gb,\gd)\in\cQ$.

\begin{proposition}\label{Phase-diag:bead}
For the single-bead model, we have 
\begin{align}
\cC_{\mathrm{bead}}\;:=\;\{(\beta,\delta)\in \cQ\colon \tilde f^{\pt\circ}(\beta,\delta)=0\}&\;=\;\{(\beta,\delta)\in \cQ\colon \beta\geq \beta_c, \delta \leq \delta^{\pt\circ}_c(\beta)\}\;,
\end{align}
where, for every $\beta\geq \beta_c$, the quantity $\delta_c^{\pt\circ}(\beta)$ is the unique solution in $\gd$ of 
\begin{equation}\label{eq:cc:bead}
2 \log \Gamma_\beta+h_\beta(\delta)=0.
\end{equation}
\end{proposition}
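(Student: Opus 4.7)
The proof of Proposition~\ref{Phase-diag:bead} parallels the strategy used for Theorem~\ref{Phase-diag} and relies on the random walk representation announced in Section~\ref{sec:prth22}. Setting $Y_i:=|\ell_i|$ and using that, within a single bead, consecutive stretches alternate signs (so that the positivity constraint forces $\ell_1>0$ and hence $\ell_i=(-1)^{i-1}Y_i$ for $1\leq i\leq 2N$), we exploit the identity
\[\min(|\ell_i|,|\ell_{i+1}|) = \tfrac{1}{2}\big(Y_i+Y_{i+1}-|Y_{i+1}-Y_i|\big)\]
to rewrite the Gibbs weight $e^{H(\ell)}$ as a product of $\Pb_\beta$-distributed increments times the pinning factor. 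With the convention $Y_0=Y_{2N+1}=0$, a direct computation yields
\begin{equation}\label{eq:RW-rep-bead}
e^{-\beta L}\, Z^{\,\circ,+}_{L,\beta,\delta} \;=\; c_\beta \sum_{N\geq 1} \Gamma_\beta^{2N}\,\Ebbzero\!\left[e^{\delta\sum_{k=1}^{2N}\ind_{\{H_k=0\}}}\ind_{A_{N,L}}\right],
\end{equation}
where $H_k:=\sum_{i=1}^{k}(-1)^{i-1}Y_i$ is the polymer's height after the $k$-th vertical stretch, and $A_{N,L}$ gathers the constraints $Y_i\geq 1$ for $1\leq i\leq 2N$, $H_k\geq 0$ for every $k$, $H_{2N}=Y_{2N+1}=0$, together with the area condition $\sum_{i=1}^{2N}Y_i = L-2N$. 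The prefactor $\Gamma_\beta^{2N}$---rather than $\Gamma_\beta^N$ as in the unrestricted model---reflects that single beads necessarily contain $2N$ stretches, and it is precisely what produces the coefficient $2$ in front of $\log\Gamma_\beta$ in \eqref{eq:cc:bead}.

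The key structural observation is that the polymer's lower envelope $\Lambda_j:=H_{2j}$, $0\leq j\leq N$, is itself a random walk under $\Pbbzero$: its increments $\Lambda_j-\Lambda_{j-1} = \ell_{2j-1}+\ell_{2j} = -(Y_{2j}-Y_{2j-1})$ are the sign-reversed even-indexed increments of $(Y_i)$, hence i.i.d.\ with the symmetric distribution $\Pb_\beta$. Within a single bead, the odd-indexed heights $H_{2j-1}=\Lambda_{j-1}+Y_{2j-1}\geq 1$ are strictly positive, so $\#\{1\leq k\leq 2N: H_k=0\}=\#\{1\leq j\leq N: \Lambda_j=0\}$. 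For the upper bound on $\tilde f^{\pt\circ}$, we drop all $Y$-specific constraints in $A_{N,L}$ (they can only lower the expectation), and the inner quantity in \eqref{eq:RW-rep-bead} is then bounded above by the wetting partition function $\Ebbzero[e^{\delta\sum \ind_{\{\Lambda_j=0\}}}\ind_{\{\Lambda\in B_N^{0,+}\}}]$ of \eqref{wetmod}. By definition of $h_\beta(\delta)$ and standard superadditivity arguments (see Appendix~\ref{app:wetting}), this is at most $C(\epsilon)\,e^{N(h_\beta(\delta)+\epsilon)}$ for any $\epsilon>0$. Since $A_{N,L}$ forces $N\leq L/4$, summing over $N$ with the prefactor $\Gamma_\beta^{2N}$ gives $e^{-\beta L}Z^{\,\circ,+}_{L,\beta,\delta}=O(L)$ whenever $2\log\Gamma_\beta+h_\beta(\delta)\leq 0$, hence $\tilde f^{\pt\circ}(\beta,\delta)=0$ throughout this region.

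For the reverse inequality $\tilde f^{\pt\circ}(\beta,\delta)>0$ when $2\log\Gamma_\beta+h_\beta(\delta)>0$, we exhibit a family of trajectories contributing $e^{(\beta+\epsilon) L}$ for some $\epsilon>0$. Fix a small $c_0>0$ and take $N=\lfloor c_0 L\rfloor$; restrict to trajectories whose lower envelope $(\Lambda_j)_{j=0}^N$ realizes a typical wetting configuration (contributing $e^{Nh_\beta(\delta)(1-o(1))}$ by definition of $h_\beta(\delta)$), while the odd-indexed positions $Y_{2j-1}$ are tuned around their conditioned mean $M=(L-2N)/(2N)\sim 1/(2c_0)$ so as to satisfy the area condition. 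A standard large-deviation estimate (Cram\'er / Donsker--Varadhan type) bounds the entropic cost of this latter constraint by $e^{-N\lambda(c_0)}$ with $\lambda(c_0)\to 0$ as $c_0\to 0$, so choosing $c_0$ small enough that $\lambda(c_0)<2\log\Gamma_\beta+h_\beta(\delta)$ yields $\tilde f^{\pt\circ}(\beta,\delta)\geq c_0\,(2\log\Gamma_\beta+h_\beta(\delta)-\lambda(c_0))>0$. The main technical obstacle lies precisely in this lower bound: decoupling the wetting contribution of $(\Lambda_j)$ from the joint constraints $Y_i\geq 1$ and $Y_{2N+1}=0$ (which involve the full walk $(Y_i)$) requires a careful conditioning argument on the even-indexed increments of $Y$, combined with a local central limit estimate for the odd increments and, where needed, the conditional FKG inequality from Appendix~\ref{app:FKG}. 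Finally, the continuity and strict monotonicity of $h_\beta(\cdot)$ with $h_\beta(0)=0$, together with the equivalence $\log\Gamma_\beta\leq 0\Leftrightarrow \beta\geq \beta_c$, imply the existence and uniqueness of $\delta^{\pt\circ}_c(\beta)$ solving \eqref{eq:cc:bead} for $\beta\geq \beta_c$, and the emptiness of $\cC_{\mathrm{bead}}$ for $\beta<\beta_c$.
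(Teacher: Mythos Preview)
Your random-walk representation via $Y_i=|\ell_i|$ is correct and coincides with the paper's $(S,I)$-decomposition of Proposition~\ref{prop:PEZ}: your $\Lambda_j=H_{2j}$ is the lower envelope $I_j$, and the odd heights $H_{2j-1}$ are the upper envelope $S_j$. The upper bound---dropping all constraints except those on $\Lambda$ to recover the wetting partition function---is exactly what the paper does.

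The lower bound is where your route diverges from the paper and where the sketch has a real gap. The paper sidesteps the area constraint by passing to the generating function $\sum_L e^{-\gamma L}\tilde Z^{\,\circ,+}_{L,\beta,\delta}=\sum_N(\Gamma_\beta e^{-\gamma})^{2N}R_N^{\beta,\delta,\gamma}$; at $\gamma=0$ the quantity $R_N^{\beta,\delta,0}$ carries \emph{no} area restriction, so one simply forces $I_k\le\alpha\sqrt N$ and $S_k\ge\alpha\sqrt N+1$ at sub-exponential cost, applies the FKG inequality of Appendix~\ref{app:FKG} to the $I$-part, and reads off $\kappa_\beta(\delta,0)\ge h_\beta(\delta)$ in two lines. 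Your direct construction with $N=\lfloor c_0 L\rfloor$ keeps the area constraint, and this is precisely what obstructs the decoupling you describe. With $N$ linear in $L$ the prescribed in-between area is only $O(N)$ (equivalently $q_N^L=O(1/N)$), so the upper envelope conditioned on its endpoint and area has typical height $O(1)$ with Gaussian fluctuations of order $\sqrt N$; the single-bead event $\{S\succ I\}$ (your $\{Y_i\ge1\}$) then has small probability and is in fact incompatible with the natural separating device ``$I\le\alpha\sqrt N$, $S\ge\alpha\sqrt N$'', since the latter would force $A_{N+1}(S)\ge\alpha N^{3/2}\gg O(N)$. Your sentence about an entropic cost $e^{-N\lambda(c_0)}$ with $\lambda(c_0)\to0$ does not address this: the area large deviation alone is indeed sub-exponential here, but maintaining $S\succ I$ simultaneously with the small-area conditioning is the genuine difficulty, and resolving it would require estimates in the spirit of Proposition~\ref{prop:estimD} extended to the degenerate regime $q\to0$---substantially more work than the paper's generating-function shortcut.
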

Notice that an analytic expression of $\delta_c^{\pt\circ}(\beta)$ can be derived from \eqref{eq:cc:bead} and \eqref{eq:explicit:h}, similarly to \eqref{eq:cc2} in Theorem~\ref{Phase-diag}. Let us now focus on the collapsed phase of the single-bead model. The surface transition occurs along a curve denoted by $\tilde\delta_c:[\beta_c,\infty) \mapsto \R^+$ where 
$\tilde \delta_c(\beta)$ turns out to be the critical point of the wetting model introduced in \eqref{wetmod}, that is for every 
$\beta\geq \beta_c$,
\begin{equation}\label{def:deltatilde}
\tilde \delta_c(\beta)\,:=\,\inf\{\delta\geq 0 \colon\,  h_\beta(\delta)>0\}\,=\,-\log(1-e^{-\beta/2})\;.
\end{equation}
The second identity in \eqref{def:deltatilde} is proven in Proposition~\ref{prop:wetpoint}. Definitions \eqref{eq:cc:bead} and \eqref{def:deltatilde} ensure us that $\tilde \delta_c(\beta)\leq \delta_c(\beta)\leq \delta_c^{\pt\circ}(\beta)$ for $\beta\geq \beta_c$: thus, the curve $\tilde\delta_c:[\beta_c,\infty) \mapsto \R^+$ lies in both $\cC$ and $\cC_{\mathrm{bead}}$. We will see in Theorem \ref{th:partfunbead} below that the second order term in the exponential development of $Z_{L,\gb,\gd}^{\,\circ,+}$ loses its analyticity along that curve.

At this stage, we divide the collapsed phase $\cC_{\mathrm{bead}}$ into a \emph{desorbed collapsed} phase $\cD\cC$ and an  \emph{adsorbed collapsed}  phase $\cA\cC$ defined as
\begin{equation}\begin{aligned}
\cD\cC&:=\{(\beta,\delta)\in \cQ\colon \beta\geq \beta_c, \delta \leq \tilde\delta_c(\beta)\}\;,\\
\cA\cC&:=\{(\beta,\delta)\in \cQ\colon \beta\geq \beta_c, \tilde\delta_c(\beta)\leq \delta \leq \delta_c^{\pt\circ}(\beta) \}\;,
\end{aligned}\end{equation}
where we dropped the subscript ``bead'' to lighten notations. To fully state Theorem~\ref{th:partfunbead}, we need to introduce some definitions. Let $\cL$ be the logarithmic moment generating function of the distribution $\Pbb$ (recall \eqref{def:Pgb}), that is for any $|h|<\gb/2$,
\begin{equation}\label{def:cL}
\cL(h) \;:=\; \log \Ebb [e^{hX_1}],
\end{equation}
and for every $\bh=(h_0,h_1)\in\cD_\gb:=\{(h_0,h_1)\in\R^2, |h_1|<\gb/2, |h_0+h_1|<\gb/2\}$, define
\begin{equation}\label{def:cLgL}
\cL_\gL (\bh) \;:=\; \int_0^1 \cL(h_0 x+h_1) \dd x\;,
\end{equation}
which is convex on $\cD_\gb$. In \cite[Lemma 5.3]{CNP16}, it is proven that $\bh\in\cD_\gb \mapsto \nabla\cL_\gL(\bh)=(\partial_{h_0}\cL_\gL, \partial_{h_1}\cL_\gL)(\bh)$ is a $\cC^1$-diffeomorphism from $\cD_\gb$ to $\R^2$, so let $\tilde \bh:\R^2\to\cD_\gb$ be its inverse. With those notations at hand, define
\begin{equation}
\phi_{(\gb,\gd)}(a)\,:=\,a \Big(2 \log \Gamma_\beta+  h_\beta(\delta)-\tfrac{1}{2a^2} \tilde h_0\big(\tfrac{1}{2a^2},0\big)+  \mathcal{L}_{\Lambda} (\tilde{\bf h}\big(\tfrac{1}{2a^2},0)\big)\Big)\;,\quad a\in(0,\infty)\,.
\end{equation}
We will prove that $\phi_{(\gb,\gd)}$ is negative and strictly concave on $(0,\infty)$ and reaches its maximum at some $\tilde a = \tilde a(\gb,\gd)\in(0,\infty)$. Finally, set $\Phi(\gb,\gd):=\phi_{(\gb,\gd)}(\tilde a(\gb,\gd))$, let $\gs_\gb^2$ be the variance of $X_1$ under $\Pbb$, and recall \eqref{defcomp} for the definition of $\asymp$. Recall also \eqref{eq:cc:bead} and \eqref{def:deltatilde} for the definitions of $\delta^{\pt\circ}_c(\beta)$ and $\tilde \delta_c(\beta)$ respectively.

\begin{theorem}\label{th:partfunbead}
Let  $\beta>\beta_c$.
\begin{enumerate}[label=(\roman*)]
\item For $\delta\in (\tilde \delta_c(\beta), \delta^{\pt\circ}_c(\beta))$, then
\begin{equation}
Z_{L,\beta,\delta}^{\,\circ,+}\:\asymp\: \frac{1}{L^{3/4} } e^{\, \beta L+\Phi(\beta,\delta) \sqrt{L}}\,,
\end{equation}
\item for $\delta\in (0, \tilde \delta_c(\beta)]$ and $\eps>0$, there exist $C>0$ and $L_0\in\N$ such that for $L\geq L_0$,
\begin{equation}
e^{\, \beta L+\Phi(\beta,0) \sqrt{L}+(\Psi(\gb)-\eps) L^{1/6}} \:\leq\: Z_{L,\beta,\delta}^{\,\circ,+}\:\leq\: \frac{C}{L^{3/4}} \pt e^{\, \beta L+\Phi(\beta,0) \sqrt{L}}\,,
\end{equation}
\item for $\delta=0$, then
\begin{equation}
Z_{L,\beta,\delta}^{\,\circ,+}\:=\: e^{\, \beta L+\Phi(\beta,0) \sqrt{L}+\Psi(\beta) L^{1/6} (1+o(1))}\;,\qquad\text{as}\quad L\to\infty\,,
\end{equation}
\end{enumerate}
where
\begin{equation*}
\Psi(\gb)\;:=\;-\,|\mathrm{a}_1|\pt \Bigg[\frac{\tilde a(\gb,0)\, \sigma_\gb^2}2 \; \tilde h_0\Big(\tfrac{1}{2\pt \tilde a_{(\gb,0)}^2},0\Big)^2 \Bigg]^{1/3}  
\end{equation*}
and $\mathrm{a}_1$ denotes the first zero (in absolute value) of the Airy function.
\end{theorem}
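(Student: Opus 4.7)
The plan is to derive a random-walk representation for $Z_{L,\beta,\delta}^{\,\circ,+}$, to recognize the optimization over the horizontal extension $N$ as the variational problem whose solution is $\Phi(\beta,\delta)$, and to separate the \emph{wetting of the lower envelope} of the trajectory from the Cramér-type fluctuation theory of the vertical-stretch sizes.

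\textbf{Step 1 (random-walk representation).} Starting from the identity
\[
\ell_i \twg \ell_{i+1} = \tfrac12\big(|\ell_i|+|\ell_{i+1}|-|\ell_i+\ell_{i+1}|\big)
\]
and the reparametrization $V_i := (-1)^{i-1}\ell_i > 0$ (valid on $\gO_L^{\,\circ,+}$ since vertical stretches alternate and are non-zero), I would rewrite
\[
Z_{L,\beta,\delta}^{\,\circ,+} \;=\; c_\beta\, e^{\beta L}\sum_{N\geq 1} \Gamma_\beta^{\,2N}\, \Ebbzero\!\left[e^{\delta \#\{1\leq k < 2N:\, X_k=0\}} \ind_{\cA_{L,N}}\right],
\]
where $(Y_i)_{i\geq 0}$ is the walk with $Y_0=0$ and i.i.d. $\Pbb$-increments, $X_k := \sum_{j=1}^k (-1)^{j-1} Y_j$ is the alternating partial sum (i.e.\ the lower/upper envelope), and $\cA_{L,N}$ encodes $Y_{2N+1}=0$, $Y_i\geq 1$ for $1\leq i\leq 2N$, $\sum_{i=1}^{2N} Y_i = L-2N$, and $X_k\geq 0$ for all $k\leq 2N$ (the hard-wall constraint).

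\textbf{Step 2 (variational problem via tilting).} The $N$-th term concentrates near $N \approx a\sqrt L$, with $a$ to be optimized. For each such $N$, I would tilt the increment law $\Pbb$ by an affine exponential indexed by $\tilde{\bh}\big(\tfrac{1}{2a^2},0\big)$, as provided by \cite[Lemma~5.3]{CNP16}, so that the tilted walk meets the area constraint and the bridge endpoint in expectation. The resulting change of measure contributes the Cramér cost $\exp\!\big(2N[\cL_\gL(\tilde{\bh}) - \tfrac{1}{2a^2}\tilde h_0]\big)$; simultaneously the lower-envelope sequence $(X_{2j})_{j=0}^N$ becomes, after conditioning on the pair sums $Y_{2j-1}+Y_{2j}$, an (approximately) wetting trajectory for the walk $\Pbb$ whose partition function contributes $\exp(Nh_\beta(\delta)+o(N))$; the factor $\Gamma_\beta^{\,2N}$ accounts for the base entropy. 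Collecting factors yields the exponential rate $\sqrt L\,\phi_{(\beta,\delta)}(a)$. Strict concavity of $\phi_{(\beta,\delta)}$ on $(0,\infty)$ --- following from the convexity of $\cL_\gL$ and smoothness of $\tilde{\bh}$ --- then ensures a unique maximizer $\tilde a(\beta,\delta)$ and the leading exponential $\Phi(\beta,\delta)\sqrt L$.

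\textbf{Step 3 (the three regimes).} For~(i), $\delta\in(\tilde\delta_c(\beta),\delta_c^{\,\circ}(\beta))$, the wetting model on the lower envelope is in its \emph{localized} phase: $h_\beta(\delta)>0$, the envelope stays within $O(1)$ of the wall with exponential mixing, and the FKG inequality of Appendix~\ref{app:FKG} decouples the wetting from the stretch fluctuations. The $L^{-3/4}$ prefactor then comes from three independent local CLTs at scale $\sqrt L$ --- for the horizontal extension $N$, for the total area $\sum Y_i = L-2N$, and for the bridge endpoint $Y_{2N+1}=0$ --- each contributing $L^{-1/4}$. For~(iii), $\delta=0$, one has $h_\beta(0)=0$ and the lower envelope rescaled by $L^{1/4}$ converges to a Brownian excursion with prescribed area; the $L^{1/6}$ correction then arises as the entropic price of forcing that excursion to stay non-negative under the area/tilt constraint. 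A Sturm--Liouville computation identifies this price with the smallest eigenvalue of the half-line Laplacian with a linear potential, which explains the appearance of the first Airy zero $|\mathrm{a}_1|$ in $\Psi(\beta)$. Case~(ii) is then a straightforward interpolation: the upper bound reuses the local-CLT argument of (i) (the wetting term is negligible since $h_\beta(\delta)=0$ on $(0,\tilde\delta_c(\beta)]$), while the lower bound follows from (iii) combined with the monotonicity $Z_{L,\beta,\delta}^{\,\circ,+}\geq Z_{L,\beta,0}^{\,\circ,+}$.

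\textbf{Main obstacle.} The most delicate point is the identification of $\Psi(\beta)L^{1/6}$ in~(iii): it demands a \emph{sharp} expansion of the probability that the tilted walk keeps its alternating partial sums non-negative while satisfying the fixed area $L-2N$. The scaling $L^{1/6}$ reflects the balance between the $L^{1/4}$ lateral fluctuations of the lower envelope and the $L^{-1/12}$ spectral gap of the associated half-line operator under a linear drift. Executing this via either a strong Brownian coupling or a direct saddle-point analysis of the constrained transition kernel --- and matching upper and lower bounds at this precision --- should occupy the technical core of Section~\ref{sec:prth22} and Section~\ref{sec:tech}.
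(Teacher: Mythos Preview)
Your high-level architecture---random-walk representation, Cram\'er/tilting for the area, wetting for the lower envelope, Airy asymptotics at $\delta=0$, and monotonicity for case~(ii)---is the right one and matches the paper's six-step program. But there is a genuine structural gap in how you set up Step~1, and it propagates through the rest.

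The paper does \emph{not} use a single walk $Y$ with alternating partial sums $X_k=\sum_{j\le k}(-1)^{j-1}Y_j$. Instead, the identity \eqref{def:zxy+:ter} produces \emph{two independent} $\Pbb$-random walks: the upper envelope $S_k=\sum_{i\le 2k-1}\ell_i$ and the lower envelope $I_k=\sum_{i\le 2k}\ell_i$. The single-bead and area constraints become $\{S\succ I\}$, $S\in B_{N+1}^{0,+}$, $I\in B_N^{0,+}$, and $A_{N+1}(S)=A_N(I)+qN^2$; the wetting reward $\delta$ sits \emph{only on $I$}. This two-walk structure is what makes the decoupling tractable: Proposition~\ref{prop:estimD} integrates $S$ out first via a local large-deviation estimate (Proposition~\ref{prop:DNLSI}), and the residual dependence of the $S$-cost on $I$ is precisely the linear pre-wetting penalty $\exp\bigl(-\partial_q g(q,0)\,A_N(I)/N\bigr)$. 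The object you then have to analyze is
\[
E^{\,\circ}_{N,q}=\Ebbzero\!\Big[e^{\delta\sum_{k=1}^N\ind_{\{I_k=0\}}}\,\ind_{\{I\in B_N^{0,+}\}}\,e^{-\partial_q g(q,0)\,A_N(I)/N}\Big],
\]
a \emph{wetting model with an area penalty}. Your phrase ``after conditioning on the pair sums $Y_{2j-1}+Y_{2j}$, an (approximately) wetting trajectory'' is aiming at this, but in a one-walk framework the wetting contacts and the global area constraint remain entangled; there is no clean conditional independence to exploit, and in particular the pre-wetting term---which is \emph{the} object driving the $L^{1/6}$ scale---never appears explicitly. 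Without it, the program for~(iii) cannot be carried out: the Airy zero enters through $\lim_{N\to\infty}N^{-1/3}\log E_N(\gamma)=J(\sigma_\beta\gamma)$ (Proposition~\ref{convinvblock}), i.e.\ the Laplace transform of the area under a Brownian meander, not through a separate ``spectral gap of the half-line operator'' applied to a constrained excursion with fixed area.

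Two smaller corrections. First, the $L^{-3/4}$ in~(i) is not three independent $L^{-1/4}$ local CLTs: it is the $N^{-2}\sim L^{-1}$ from the local limit theorem for $(A_{N+1}(S),S_{N+1})$ (the $1/N^2$ in Proposition~\ref{prop:estimD}), multiplied by the $L^{1/4}$ width of the Gaussian window in the Laplace sum over $N$ (see~\eqref{eq:riemannsum}). Second, in~(i) the wetting is not merely ``negligible after FKG decoupling'': one needs the two-sided estimate $E^{\,\circ}_{N,q}\asymp e^{h_\beta(\delta)N}$ of Proposition~\ref{prop:ENq:AC}, whose lower bound requires a uniform renewal argument (Lemma~\ref{lem:tdhhN:conv} and the uniform version of~\cite{Ney81}) to show the area penalty does not spoil the wetting free energy at the constant level.
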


These estimate allow us to derive some properties of typical trajectories under the polymer measure, most notably regarding the number of contacts with the hard wall in the collapsed phase.

\begin{samepage}
\begin{corol}\label{corol:partfunbead}
Let $\gb>\gb_c$.
\begin{enumerate}[label=(\roman*)]
\item The function $\gd\mapsto\Phi(\gb,\gd)$ is $\cC^1$ on $(\tilde \delta_c(\beta), \delta^{\pt\circ}_c(\beta))$.  For $\delta\in (\tilde \delta_c(\beta), \delta^{\pt\circ}_c(\beta))$ and for any $\eps>0$ we have
\begin{equation}
\lim_{L\to\infty} \bP_{L,\beta,\delta}^{\,\circ,+} \Bigg( \sum_{k=1}^{N_\ell}\ind_{\{\sum_{i=1}^k\ell_i=0\}}  \in \Big[\partial_{\gd}\Phi(\gb,\gd)-\eps,\partial_{\gd}\Phi(\gb,\gd)+\eps\Big] \sqrt{L} \Bigg) = 1\:.
\end{equation}
\item For $\delta\in [0,\tilde \delta_c(\beta))$, there exist some $K>0$ (which only depends on $\gb$) such that
\begin{equation}
\lim_{L\to\infty} \bP_{L,\beta,\delta}^{\,\circ,+} \bigg( \sum_{k=1}^{N_\ell}\ind_{\{\sum_{i=1}^k\ell_i=0\}}  \leq K L^{1/6} \bigg) =1 \:.
\end{equation}
\end{enumerate}
\end{corol}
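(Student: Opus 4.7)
Both parts hinge on the following tilting identity: writing
\[
\mathcal{N}(\ell) \;:=\; \sum_{k=1}^{N_\ell}\ind_{\{\sum_{i=1}^k \ell_i = 0\}}
\]
for the number of contacts with the hard wall, and observing that $\delta$ enters the Hamiltonian $H$ linearly through $\delta\pt\mathcal{N}(\ell)$, for every $\eta$ with $\delta+\eta\geq 0$ one has
\[
\bE_{L,\beta,\delta}^{\,\circ,+}\!\big[e^{\eta\,\mathcal{N}(\ell)}\big] \;=\; \frac{Z^{\,\circ,+}_{L,\beta,\delta+\eta}}{Z^{\,\circ,+}_{L,\beta,\delta}}.
\]
Combined with the sharp asymptotics of Theorem~\ref{th:partfunbead} and a Chernoff bound, this identity is the backbone of both proofs.

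\textbf{Part (i).} The first task is to show that $\delta\mapsto\Phi(\beta,\delta)$ is $C^1$ on $(\tilde\delta_c(\beta),\delta_c^{\pt\circ}(\beta))$. Inspection of the definition of $\phi_{(\beta,\delta)}(a)$ reveals that $\delta$ enters only through $h_\beta(\delta)$, which is analytic on that interval by the explicit formula of Appendix~\ref{app:wetting}. Combined with the strict concavity of $\phi_{(\beta,\delta)}$ (announced right after its definition) and the implicit function theorem applied to the first-order condition $\partial_a\phi_{(\beta,\delta)}(a)=0$, the maximizer $\tilde a(\beta,\delta)$ becomes smooth in $\delta$; the envelope theorem then yields $\partial_\delta\Phi(\beta,\delta)=\tilde a(\beta,\delta)\,h'_\beta(\delta)$. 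Given this regularity, a Chernoff bound with small $\eta>0$ combined with the $\asymp$ estimate of Theorem~\ref{th:partfunbead}(i) yields
\[
\bP_{L,\beta,\delta}^{\,\circ,+}\!\big(\mathcal{N} \geq (\partial_\delta\Phi + \epsilon)\sqrt L\big) \;\leq\; C\,L^{1/4}\,\exp\!\Big(\sqrt L\,\big[\Phi(\beta,\delta+\eta)-\Phi(\beta,\delta)-\eta(\partial_\delta\Phi+\epsilon)\big]\Big).
\]
By the $C^1$-regularity the bracket equals $o(\eta)-\eta\,\epsilon$, which is strictly negative for $\eta$ small enough, so the right-hand side tends to $0$. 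The lower tail $\mathcal{N}\leq(\partial_\delta\Phi-\epsilon)\sqrt L$ is handled symmetrically by taking $\eta<0$ with $|\eta|$ small.

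\textbf{Part (ii).} Fix $\delta\in[0,\tilde\delta_c(\beta))$ and set $\eta:=\tilde\delta_c(\beta)-\delta>0$, so that $\delta+\eta=\tilde\delta_c(\beta)$ still falls in the range where Theorem~\ref{th:partfunbead}(ii) provides the upper bound $Z^{\,\circ,+}_{L,\beta,\delta+\eta}\leq CL^{-3/4}e^{\beta L+\Phi(\beta,0)\sqrt L}$. Using that $\delta\mapsto Z^{\,\circ,+}_{L,\beta,\delta}$ is nondecreasing and Theorem~\ref{th:partfunbead}(iii), one gets $Z^{\,\circ,+}_{L,\beta,\delta}\geq e^{\beta L+\Phi(\beta,0)\sqrt L+(\Psi(\beta)-\epsilon')L^{1/6}}$ for any $\epsilon'>0$ and $L$ large. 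The tilting identity therefore gives $\bE^{\,\circ,+}_{L,\beta,\delta}[e^{\eta\mathcal{N}}]\leq CL^{-3/4}e^{(|\Psi(\beta)|+\epsilon')L^{1/6}}$, and Chernoff's inequality yields
\[
\bP^{\,\circ,+}_{L,\beta,\delta}(\mathcal{N} > K L^{1/6}) \;\leq\; CL^{-3/4}\,\exp\!\big([|\Psi(\beta)|+\epsilon'-\eta K]\,L^{1/6}\big),
\]
which vanishes as soon as $K>(|\Psi(\beta)|+\epsilon')/\eta$.

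\textbf{Main obstacle.} The delicate step is establishing the $C^1$-regularity of $\Phi(\beta,\cdot)$ needed in part (i): one must leverage the strict concavity of $\phi_{(\beta,\delta)}$ and the implicit function theorem to obtain smooth dependence of the maximizer $\tilde a(\beta,\delta)$ on $\delta$, then invoke the envelope theorem to identify $\partial_\delta\Phi$. A secondary wrinkle is that the constant $K$ produced above depends on $\delta$ through $\tilde\delta_c(\beta)-\delta$ and blows up as $\delta\uparrow\tilde\delta_c(\beta)$; obtaining a $\delta$-uniform $K$ (if that is the intent of the statement) would require a finer lower bound on $Z^{\,\circ,+}_{L,\beta,\delta}$ near $\tilde\delta_c(\beta)$.
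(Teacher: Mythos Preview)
Your proof is correct and follows essentially the same approach as the paper: both parts rest on the tilting identity $\bE_{L,\beta,\delta}^{\,\circ,+}[e^{\eta\mathcal{N}}]=Z^{\,\circ,+}_{L,\beta,\delta+\eta}/Z^{\,\circ,+}_{L,\beta,\delta}$ combined with a Chernoff bound and the asymptotics of Theorem~\ref{th:partfunbead}, and the $C^1$-regularity of $\Phi(\beta,\cdot)$ is obtained exactly as you describe, via the implicit function theorem applied to $\partial_a\phi_{(\beta,\delta)}(a)=0$ with $\phi''<0$.

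Two minor remarks. In part~(i), since both $Z^{\,\circ,+}_{L,\beta,\delta+\eta}$ and $Z^{\,\circ,+}_{L,\beta,\delta}$ carry the same $L^{-3/4}$ prefactor, their ratio is $O(1)$ rather than $O(L^{1/4})$; this does not affect the argument. In part~(ii) you tilt all the way to $\delta+\eta=\tilde\delta_c(\beta)$, whereas the paper tilts to some $\delta+t<\tilde\delta_c(\beta)$; both are valid because the upper bound in Theorem~\ref{th:partfunbead}(ii) is stated on the closed interval $(0,\tilde\delta_c(\beta)]$. Your final observation is also pertinent: the paper's proof produces a $K$ that, like yours, depends on $\tilde\delta_c(\beta)-\delta$ and hence is not literally uniform in $\delta$, so the parenthetical ``which only depends on $\beta$'' in the statement is slightly stronger than what either argument delivers.
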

\end{samepage}

\begin{rmq} $\bullet$ The surface transition occurring along the curve $\{(\gb,\tilde\gd_c(\gb)), \gb>\gb_c\}$ is proven by Theorem~\ref{th:partfunbead} $(i)$-$(ii)$ (and confirmed by Corollary~\ref{corol:partfunbead}). Indeed, $\gd>\tilde\gd_c(\gb)$ implies that $h_\gb(\gd)>h_\gb(0)=0$, hence $\phi_{(\gb,\gd)}(a)>\phi_{(\gb,0)}(a)$ for $a\in(0,\infty)$, and $\Phi(\gb,\gd)>\Phi(\gb,0)$ (whereas $h_\gb(\gd)=0$ for all $\gd\leq\tilde\gd_c(\gb)$).\\
$\bullet$ In Theorem \ref{th:partfunbead} $(ii)$, we conjecture that the upper bound is not optimal, and that $(iii)$ should apply at least for every $\gd<\tilde\gd_c(\gb)$.
Similarly, for Theorem \ref{corol:partfunbead} $(ii)$, we expect  the typical number of contacts with the hard wall to be of smaller order than $L^{1/6}$.
\\
$\bullet$ In Theorem \ref{th:partfunbead} $(iii)$,  obtaining $\Psi(\gb)$ requires to compute the Laplace transform of the area enclosed by a Brownian meander of length $T$. The first zero of the Airy function $\mathrm{a}_1$ appears in the leading order of such Laplace transform as $T\to\infty$ (see Section~\ref{sec:step4}).
\end{rmq}

%

\subsubsection{Discussion} 
Let us give some insights into Theorem~\ref{th:partfunbead}. For any trajectory $\ell \in \gO_L^{\,\circ,+}$ forming a single bead, we define its \emph{lower envelope} $I:=(I_i)_{i=0}^{N_\ell/2}$ and \emph{upper envelope} $S:=(S_i)_{i=0}^{N_\ell/2+1}$ as follow:
\begin{align}\label{defenv}
S_k&=\sum_{i=1}^{2k-1} \ell_i, \quad k\in \{1,\dots, \tfrac{N_\ell}2+1\}\\
\nonumber I_k&=\sum_{i=1}^{2k} \ell_i, \quad k\in \{1,\dots, \tfrac{N_\ell}2\}.
\end{align}
The single-bead constraint ensures that $S$ (resp. $I$) describes the topmost (resp. bottommost) layer of the polymer (see Fig.~\ref{fig:envelopes}).
\begin{figure}[h]
 \centering
    \includegraphics[width=0.55\linewidth]{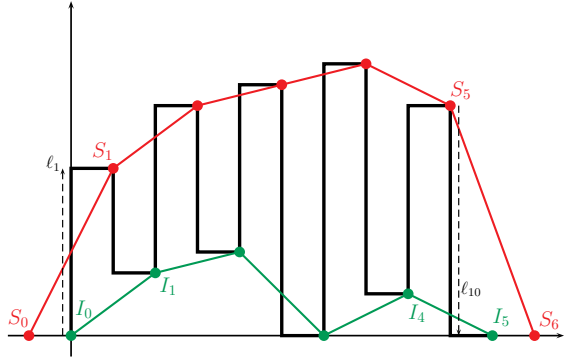}
  \caption{\footnotesize Representation of a single-bead trajectory, with its upper envelope (in red) and its lower envelope (in green) respectively described by the sequences $S$ and $I$.}
  \label{fig:envelopes}
\end{figure}

In Section \ref{rwrep} below, we will show that, under the polymer measure, the envelopes $I$ and $S$ of a given single-bead configuration may be sampled as trajectories 
of non-negative random walk bridges that are coupled via geometric constraints. 
In Section \ref{S2}, we will break that  geometric coupling and integrate over $S$ so that $I$ can be investigated on its own (see \eqref{def:Enq}). However, this comes with a cost (see Proposition~\ref{prop:estimD} below), the law of $I$ being perturbed by 
\begin{enumerate}
\item a wetting term  $\delta \sum_{k=1}^N 1_{\{I_k=0\}}$ which comes from the fact that the polymer (or equivalently its lower envelope) is adsorbed along the hard wall,\smallskip
\item a pre-wetting  term proportional to $-A_N(I)/N$ where $A_N(I)$ is the area below $I$ (see \eqref{def:AN}). The latter penalization 
comes from the fact that the upper envelope $S$ must sweep an abnormally large  area, i.e.,  $A_N(S)=A_N(I)+qN^2$ with $q>0$. Therefore,  $S$, which is already in a large deviation regime,
pushes down the lower envelope $I$ so as to keep $A_N(I)$ small. 
\end{enumerate}
\smallskip

The influence of a pre-wetting term on a $1+1$-dimensional random walk constrained to remain positive has already been studied in \cite{HV04} and \cite{ISV15} (see also \cite{IV18} for a review). 
Among physical motivations is e.g. the study of a liquide gaz interface when a thermodynamically stable 
gas is in contact with a substrate (hard-wall) that has a strong preference for the liquid 
phase, with the temperature decreasing to the liquid/gaz critical point. From this point of view, the present paper displays a new example of a physical object (i.e., the lower envelope of a collapsed  homopolymer interacting with a hard wall) associated with a model for which pre-wetting appears naturally. 

As stated in Theorem \ref{th:partfunbead}, the pre-wetting term  does not have an influence on the lower-envelope inside $\cA\cC$  since the pinning term is strong enough to keep the lower envelope at finite distance from the hard-wall. Inside $\cD\cC$, in turn  the pre-wetting term should dominate and we expect that \cite[Theorem~1.2]{HV04} also apply here, 
implying that $I$ has fluctuations of order $L^{1/6}$ (its length being $\sqrt{L}$).

%

\subsubsection{Open problems}\label{openp}
From a mathematical point of view, there are still many issues that remain to be settled concerning the present model.
\begin{enumerate}
\item Consider  
a  random walk $(X_i)_{i=0}^N$ (recall \eqref{def:Pgb}) constrained to remain non-negative and perturbed by both a wetting and a pre-wetting terms of parameter $\delta$ and $\gamma$ respectively, i.e.,
\begin{equation}\label{partfunprew}
\Eb_\gb\Big[e^{\delta \sum_{i=1}^N \ind_{\{I_k=0\}}} \ind_{\{I\in B^{0,+}_{N}\} }\, e^{-\frac{\gamma}{N} \, A_N(X)} \Big]. 
\end{equation}
Display sharp asymptotics for the partition function in \eqref{partfunprew} when $\delta$ is not larger than the critical point of the pure wetting model (i.e., $\delta\leq\tilde \delta_c(\beta)$).
Such estimates would be the key to improve Theorem~\ref{th:partfunbead} and Corollary~\ref{corol:partfunbead} in $\cD\cC$.
%
%
%
\item Consider the unrestricted model (defined in \eqref{def:gO+}--\eqref{def:freeen}) and prove that inside its collapsed phase $\cC$ a typical trajectory is made of a unique macroscopic bead.
Give a bound on the number of monomers that may lay outside this bead. 
\item For the unrestricted model again, prove the existence of the surface transition and provide an expression for its associated critical curve.
As stated in Proposition~\ref{Phase-diag:bead} the collapsed phase $\cC_{\mathrm{bead}}$ of the single-bead model contains its unrestricted counterpart $\cC$. 
However, and this is closely related to the former open issue, we conjecture that a surface transition takes place for the unrestricted model along the very same critical curve $\beta\mapsto  \tilde \delta_c(\beta)$.  
\item Provide a characterization of the critical curve dividing $\cE$ into an Adsorbed-Extended phase and a Desorbed-Extended phase.
\end{enumerate}


\section{Proof of Theorem \ref{th:partfunbead}}\label{sec:prth22}

We divide the proof of Theorem~\ref{th:partfunbead} into 6 steps. First, we adapt the random-walk representation of IPDSAW initially introduced in \cite{NGP13} to the present one-bead model. We derive a probabilistic representation of the partition function by rewriting, for every $N\leq L/2$, the contribution to the partition function of those trajectories made of $2N$ stretches, in terms of two auxiliary random walks $S$ and $I$. One particularity comes from the fact that $I$ and $S$ are coupled since the area enclosed in-between $S$ and $I$ is imposed by the length of the polymer, and another one comes from the one-bead constraint which implies that they cannot cross trajectories: hence $S$ (resp. $I$) will play the role of the \emph{upper envelope} (resp. \emph{lower envelope}) of the polymer. 
The second step consists in breaking the geometric coupling between $I$ and $S$. This is achieved by integrating over $S$, and transforming the area constraint into an exponential perturbation of the law of $I$. 
In the third and fourth steps we estimate the partition function of the lower envelope $I$, in $\cA\cC$ and $\cD\cC$ respectively. 
Finally the fifth step proves that inside the collapsed phase, the horizontal extension of a typical trajectory is of order $\sqrt{L}$, and the sixth step collects all those estimates to prove Theorem~\ref{th:partfunbead}.

\subsection{Step 1: random-walk representation.}\label{rwrep}

The understanding of IPDSAW (recall Remark \ref{compasm}) has recently been improved (see  \cite{CNPT18} for a review). The key tool was a new probabilistic representation of the partition function 
based on an auxiliary random walk conditioned to enclose a prescribed area. It turns out that this representation is of substantial help 
for the present model as well but under a different form. This is the object of the present step.

Let us first provide a probabilistic description of the one-bead partition function $Z_{L,\gb,\gd}^{\,\circ,+}$. We recall  \eqref{def:Ham} and \eqref{def:zxy+} and we observe that 
\begin{equation}\label{def:twg}
x\twg y\;=\;\frac{|x|+|y|-|x+y|}{2}\,, \quad x,y\in \mathbb Z\,. 
\end{equation}
Hence the one-bead partition function can be written as
\begin{equation}\label{def:zxy+:bis}
Z_{L,\gb,\gd}^{\,\circ,+}= e^{\gb L}\sum_{N=1}^{L/2} e^{-2\gb N}\sumtwo{\ell\in\cL_{N,L}^{\,\circ,+}}{\ell_0=\ell_{2N+1}=0} e^{-\frac{\gb}{2} \sum_{i=0}^{2N} |\ell_i + \ell_{i+1}|}\,e^{\gd\sum_{k=1}^{2N}\ind_{\{\sum_{i=1}^{k}\ell_i=0\}}}\,.
\end{equation}
At this stage, recall the definition of $\bP_\gb$ in \eqref{def:Pgb}. We consider two independent random walks $S:=(S_i)_{i\geq 0}$ and $I:=(I_i)_{i\geq 0}$ starting from 0 and such that $(S_{i+1}-S_i)_{i\geq 0}$ and $(I_{i+1}-I_i)_{i\geq 0}$  are i.i.d. sequences of random variables of law $\Pb_\gb$. We notice that for every $\ell\in\cL_{N,L}^{\,\circ,+}$ (with $\ell_0=\ell_{2N+1}=0$) the first factor in the second sum in \eqref{def:zxy+:bis} satisfies
\begin{equation}\label{def:zxy+:ter} 
e^{-\frac{\gb}{2} \sum_{i=0}^{2N} |\ell_i + \ell_{i+1}|}\,=\,c_\beta^{2N+1} \pt \bP_\gb\left(S_k=\sum_{i=0}^{2k-1}\ell_i, \forall k\leq N+1\right) \pt \bP_\gb\left( I_k=\sum_{i=0}^{2k}\ell_i, \forall k\leq N\right)\,.
\end{equation}

Let us introduce a one-to-one correspondence between trajectories $(\ell)_{i=0}^{2N+1}\in\bbZ^{2N+2}$ with $\ell_0=\ell_{2N+1}=0$, and $(S_i)_{i=0}^{N+1}\in\bbZ^{N+2}$, $(I_i)_{i=0}^N\in\bbZ^{N+1}$ with $S_0=I_0=0$ and $S_{N+1}=I_N$, by letting $S_k=\sum_{i=0}^{2k-1}\ell_i, \forall 1\leq k\leq N+1$ and $I_k=\sum_{i=0}^{2k}\ell_i, \forall 1\leq k\leq N$. 
Then, constraints on $\ell\in\cL^{\,\circ,+}_{N,L}$ can be transcribed to $S$ and $I$ (recall \eqref{def:gOxy+}). Indeed, $\sum_{i=1}^{2N}\ell_i=0$ is equivalent to $S_{N+1}=I_N=0$, and $\sum_{i=1}^{k}\ell_i\geq0$, $\forall k\leq 2N$ is equivalent to $S_i\geq0$, $\forall i\leq N+1$ and $I_i\geq0$, $\forall i\leq N$. Besides, we can write
\begin{equation}\label{def:G_N}
\sum_{i=1}^{2N}|\ell_i| = \sum_{k=1}^{N} | I_k - S_k | + \sum_{k=1}^{N+1} |S_k-I_{k-1}|\,=:\, G(S,I)\;,
\end{equation}
which is the ``geometric area'' between $S$ and $I$, and it is constrained to be equal to $L-2N$ in $\cL^{\,\circ,+}_{N,L}$. Finally, $\ell_i\ell_{i+1} <0, \forall\,1\leq i< 2N$ is equivalent to $S \succ I$ where we define
\begin{equation}\label{def:succN}
\big\{ S \succ I \big\}\;:=\;
\left\{\begin{aligned}
&S_k>I_k, \;\forall\, 1\leq k< N+1 \;, \\
&S_k>I_{k-1}, \;\forall \,1\leq k < N
\end{aligned}\right\}\,.
\end{equation}
In particular this means that in the one-bead model, $S$ remains above $I$ (thereby we respectively call $S$ and $I$ the \emph{upper} and \emph{lower envelopes} of the polymer ---recall Fig.~\ref{fig:envelopes}), and it implies that we can rewrite the geometric area $G(S,I)$: defining the \emph{signed area} as
\begin{equation}\label{def:AN}
A_n(X) \;:=\; \sum_{i=0}^{n} X_n\,,
\end{equation}
for any $n\in\N$ and $(X_k)_{k=0}^n \in\bbZ^{n+1}$, we then have that $\{S\succ I\}$ and $S_{N+1}=I_N=0$ imply $G(S,I)=2(A_{N+1}(S) - A_{N}(I))$.

Going back to \eqref{def:zxy+:bis} and plugging \eqref{def:zxy+:ter} in, those observations prove that we can rewrite the partition function as follows. Let $\Pbbx$ denote the law of a random walk on $\bbZ$ starting from $x$ with increments distributed as $\Pbb$ (henceforward we will omit the $x$-dependence in $\Pbbx$ when $x$ is clear from context). Recall that $\gGa_\gb=c_\gb e^{-\gb}$, and $B^{0,+}_{N}$ is defined in \eqref{def:By+}.
\begin{proposition}\label{prop:PEZ}
For $\gb>0$, $\gd\geq0$ and $L\geq1$,
\begin{equation}\label{PEZ}
\widetilde{Z}_{L,\gb,\gd}^{\,\circ,+} \;:=\; \frac{1}{c_\gb e^{\gb L}} Z_{L,\gb,\gd}^{\,\circ,+} \;=\; \sum_{N=1}^{L/2} (\gGa_\gb)^{2N} D^{\,\circ}_{N,q_N^L}\;,
\end{equation}
with $q_{N}^L:=(L-2N)/2N^2$, and for $q\in (0,\infty)\cap \frac{\N}{2N^2}$,
\begin{equation}\label{def:DNL}
D^{\,\circ}_{N,q}:= \Ebbzero\left[e^{\gd\sum_{k=1}^{N}\ind_{\{I_k=0\}}} 
\ind_{\{S\in B^{0,+}_{N+1},\,I\in B^{0,+}_{N}\}}
\ind_{\{ S \succ I\}}
\ind_{\left\{A_{N+1}(S)=A_{N}(I)+q N^2 \right\}}\right].
\end{equation}
where $S,I$ are two independent random walks distributed as $\Pbbzero$.
\end{proposition}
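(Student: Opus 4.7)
The proof is a bookkeeping exercise that essentially organizes the informal computations given between \eqref{def:zxy+} and the statement. My plan is to rewrite $Z_{L,\gb,\gd}^{\,\circ,+}$ in three moves: factor out an $e^{\gb L}$ from the self-touching term, reinterpret the remaining Boltzmann weight probabilistically as the joint law of two independent random walks $S$ and $I$, and translate the combinatorial constraints of $\cL_{N,L}^{\,\circ,+}$ into events on $(S,I)$ --- with the length constraint $\sum_{i=1}^{2N}|\ell_i|=L-2N$ becoming an area constraint.

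For the first move, I would use the identity $x\twg y = \tfrac12(|x|+|y|-|x+y|)$. With the boundary convention $\ell_0=\ell_{2N+1}=0$ and the length constraint, the telescoping $\sum_{i=0}^{2N}(|\ell_i|+|\ell_{i+1}|)=2(L-2N)$ produces the factor $e^{\gb L - 2\gb N}$ and yields \eqref{def:zxy+:bis}. For the second move, I would split $\sum_{i=0}^{2N}|\ell_i+\ell_{i+1}|$ by the parity of $i$, and set $S_k:=\sum_{i=0}^{2k-1}\ell_i$, $I_k:=\sum_{i=0}^{2k}\ell_i$. The two halves then coincide with $\sum_{k=1}^{N+1}|S_k-S_{k-1}|$ and $\sum_{k=1}^{N}|I_k-I_{k-1}|$, matching the joint density of two independent $\Pbb$-random walks up to the normalization $c_\gb^{N+1}\cdot c_\gb^{N}=c_\gb^{2N+1}$; this yields \eqref{def:zxy+:ter}.

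For the third move, the map $\ell\mapsto(S,I)$ is a bijection between $\cL_{N,L}^{\,\circ,+}$ (with the boundary convention) and pairs with $S_0=I_0=0$ and $S_{N+1}=I_N$. I would then translate each constraint: the endpoint $\sum_{i=1}^{2N}\ell_i=0$ gives $I_N=S_{N+1}=0$; the non-negativity $\sum_{i=1}^k\ell_i\geq 0$, considered separately at odd and even $k$, becomes $\{S\in B^{0,+}_{N+1},\,I\in B^{0,+}_{N}\}$; and the alternation $\ell_i\ell_{i+1}<0$, combined with the forced positivity $\ell_1>0$ (since $\ell_1\geq 0$ and $\ell_1\neq 0$), becomes exactly $\{S\succ I\}$ as in \eqref{def:succN}. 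Under $\{S\succ I\}$ the absolute values in \eqref{def:G_N} resolve with definite signs and telescope into $G(S,I)=2(A_{N+1}(S)-A_N(I))$, so that the length constraint becomes $A_{N+1}(S)=A_N(I)+q_N^L\, N^2$. Collecting Steps 1--3 and using $c_\gb^{2N}e^{-2\gb N}=\gGa_\gb^{2N}$ gives \eqref{PEZ}.

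The main point to verify carefully is the third step: the asymmetric inequalities in \eqref{def:succN} must exactly match the alternation of the stretches together with the boundary identity $S_{N+1}=I_N$ (accounting for the mismatch between the lengths $N+2$ of $S$ and $N+1$ of $I$). Once this is in place, the decisive identity $G(S,I)=2(A_{N+1}(S)-A_N(I))$ follows by removing the absolute values with the correct signs, and it is precisely this conversion of the length constraint into an area constraint that powers the subsequent probabilistic analysis.
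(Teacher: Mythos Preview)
Your proposal is correct and follows essentially the same approach as the paper: the three moves you outline (the $\twg$ identity producing \eqref{def:zxy+:bis}, the parity split giving \eqref{def:zxy+:ter}, and the dictionary between the constraints on $\ell$ and the events on $(S,I)$ culminating in $G(S,I)=2(A_{N+1}(S)-A_N(I))$) are exactly the ones the paper uses. One small point you could make explicit is that the pinning term $\gd\sum_{k=1}^{2N}\ind_{\{\sum_{i=1}^k\ell_i=0\}}$ reduces to $\gd\sum_{k=1}^{N}\ind_{\{I_k=0\}}$ because, under $\{S\succ I\}$ and $I\in B_N^{0,+}$, one has $S_k>I_k\geq 0$ for $1\leq k\leq N$, so the odd partial sums never vanish.
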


Our aim is to provide sharp estimates on the partition function $Z_{L,\gb,\gd}^{\,\circ,+}$. To that purpose, we need to estimate $D^{\,\circ}_{N,q}$ uniformly in $q\in[q_1,q_2]\cap \frac1{2N^2}\bbN$ for any $0<q_1<q_2$ fixed.

\subsection{Step 2: integrating over $S$ in $D^{\,\circ}_{N,q}$}\label{S2}
With Proposition \eqref{prop:estimD} below, we show that the geometric constraint imposed to $I$ and $S$ can be relaxed provided we introduce an exponential perturbation to the law of $I$, thus breaking the coupling between $I$ and $S$.

Define $g:\R^2\to\R$ by
\begin{equation}\label{def:gqp}
g(q,p) \;:=\; \tilde \bh^{q,p} \cdot (q,p) - \cL_\gL(\tilde \bh^{q,p})\;,
\end{equation}
where $\cD_\gb\subset\R^2$, $\tilde \bh^{\cdot,\cdot}:\R^2\to\cD_\gb$ and $\cL_\gL:\cD_\gb\to\R$ are all defined in \eqref{def:cLgL} ---from now on we will note $\tilde \bh^{\cdot,\cdot}:=\tilde \bh{(\cdot,\cdot)}$ to tighten notations. We will see that $g$ is the rate function of a large deviation principle associated to the couple $\gL_N:=(A_N(X)/N,X_N)$ under $\Pbb$ (see the proof of Proposition~\ref{prop:DNLSI}). Using \cite[Lemma V.4]{dH00}, it follows that $g$ is convex. Moreover $\tilde \bh$ and $\cL_\gL$ are $\cC^1$ functions and
\begin{equation}\label{eq:gradg}
\nabla g(q,p) \,:=\, (\partial_q g, \partial_p g)(q,p) \,=\, \tilde \bh^{q,p}\;,
\end{equation}
in particular $g$ is $\cC^2$. Finally $\nabla g(0,0)=\tilde \bh^{0,0}=(0,0)$, so $g(q,p)\geq g(0,0)=0$ for all $(q,p)\in\R^2$.

 \begin{proposition}\label{prop:estimD}
Fix some $q_2>q_1>0$. Then 
 \begin{equation}\label{eq:lbDNL:3}
D^{\,\circ}_{N,q} \;\asymp\; \frac{e^{-Ng(q,0)}}{N^2}  E^{\,\circ}_{N,q} \;,
\end{equation}
uniformly in $q\in [q_1,q_2]\cap \frac{1}{2N^2}\N$, where
\begin{equation}\label{def:Enq}
E^{\,\circ}_{N,q}\;:=\;\Ebbzero \left[e^{\gd\sum_{k=1}^{N}\ind_{\{I_k=0\}}} \ind_{\{I\in B^{0,+}_{N}\}} e^{-\,\partial_qg(q,0) \frac{A_{N}(I)}{N} } \right].
\end{equation}
\end{proposition}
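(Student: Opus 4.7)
The strategy is to decouple the two walks by conditioning on the lower envelope $I$ and integrating over the upper envelope $S$. Setting
\[ \Pi_N(I,q)\,:=\,\Pbbzero\bigl(S\in B^{0,+}_{N+1},\,S\succ I,\,A_{N+1}(S)=A_N(I)+qN^2\bigr), \]
one has $D^{\,\circ}_{N,q}=\Ebbzero\bigl[e^{\gd\sum_{k=1}^N\ind_{\{I_k=0\}}}\ind_{\{I\in B^{0,+}_N\}}\Pi_N(I,q)\bigr]$, so the proposition reduces to proving that, for ``good'' envelopes $I$,
\[ \Pi_N(I,q)\,\asymp\,\frac{1}{N^2}\, e^{-Ng(q,0)}\, e^{-\partial_q g(q,0)\, A_N(I)/N}, \]
and that atypical $I$ contribute negligibly to both $D^{\,\circ}_{N,q}$ and $E^{\,\circ}_{N,q}$.

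\textbf{Key local LDP for $S$.} The heart of the argument is a sharp estimate for the single-walk event: for $m\in\N$ with $m=\bar qN^2$ and $\bar q$ close to $q\in[q_1,q_2]$,
\[ \Pbb\bigl(A_{N+1}(S)=m,\,S_{N+1}=0,\,S\in B^{0,+}_{N+1}\bigr)\,\asymp\,\frac{1}{N^2}\, e^{-Ng(\bar q,0)}. \]
I would prove this by an exponential change of measure: tilt the $i$-th increment of $S$ by the weight $\tilde h_0(1-\tfrac{i-1}{N+1})+\tilde h_1$ with $\tilde\bh=\tilde\bh^{\bar q,0}$. By the defining relation $\nabla\cL_\gL(\tilde\bh)=(\bar q,0)$, the tilt makes $(A_{N+1}(S)/(N+1)^2,\,S_{N+1}/(N+1))$ typical at $(\bar q,0)$; the Legendre identity \eqref{def:gqp} converts the Radon--Nikodym cost into $e^{-Ng(\bar q,0)+O(1)}$; and a two-dimensional local CLT for the tilted pair supplies the prefactor $N^{-2}$. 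The non-negativity constraint $\{S\in B^{0,+}_{N+1}\}$ becomes typical under the tilt since the tilted walk lies at macroscopic height $\bar q N\geq q_1N>0$ away from the endpoints, with boundary contributions of order $1$ controllable via standard bridge/meander estimates.

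\textbf{Reducing $\Pi_N$ and handling $S\succ I$.} Let $\cG_N:=\{I\in B^{0,+}_N:\max_{k\leq N}I_k\leq K\sqrt N\}$ for a large constant $K$. On $\cG_N$ one has $A_N(I)\leq KN^{3/2}$, hence $\bar q-q=A_N(I)/N^2=O(N^{-1/2})$, and a Taylor expansion gives
\[ Ng(\bar q,0)=Ng(q,0)+\partial_q g(q,0)\,\frac{A_N(I)}{N}+O(1), \]
where the $O(1)$ uses strict convexity of $g$ in its first variable (a consequence of the positive-definiteness of $\mathrm{Hess}\,\cL_\gL$). Moreover the tilted $S$ sits at macroscopic height $\Theta(N)$ while $I\in\cG_N$ stays below $K\sqrt N$, so a routine fluctuation estimate for the tilted bridge gives $\Pb(S\succ I)=1-o(1)$ uniformly on $\cG_N$. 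Combining these two facts with the local LDP yields the claimed asymptotic for $\Pi_N(I,q)$.

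\textbf{Atypical $I$ and main obstacle.} A crude version of the local LDP upper bound valid for every $I$, combined with the convexity inequality $g(\bar q,0)\geq g(q,0)+\partial_q g(q,0)(\bar q-q)$, gives $\Pi_N(I,q)\leq CN^{-2}e^{-Ng(q,0)}e^{-\partial_q g(q,0)A_N(I)/N}$ for all $I$, yielding the upper half of $\asymp$ directly. The matching lower bound follows by restricting the $I$-sum to $\cG_N$ and checking that its complement contributes negligibly to $E^{\,\circ}_{N,q}$: this uses $\partial_q g(q,0)\geq0$ (convexity of $g$ and $g(0,0)=0$) together with Gaussian-type tail bounds for $\max_k I_k$ under the non-negative bridge with pinning reward $\gd$. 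The main technical difficulty I foresee is the local LDP itself: a standard local CLT after exponential tilting is insensitive to the non-negativity constraint, so the tilt must be coupled with an explicit bridge/meander decomposition to obtain both the sharp $N^{-2}$ prefactor and uniformity in $m$ over a window of width $N^{3/2}$ around $qN^2$, with matching constants between the upper and lower bounds.
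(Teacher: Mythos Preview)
Your overall architecture matches the paper's: condition on $I$, integrate $S$ via a tilted local LDP, linearize $g$ via convexity for the upper bound and via a Taylor expansion on a good set for the lower bound. The upper bound is essentially fine; note only that your ``crude LDP valid for every $I$'' needs a separate treatment when $A_N(I)\gg N^2$ (the $N^{-2}$ prefactor is only uniform on compacta), which the paper handles with a trivial Chernoff bound on $\max_i|S_i-S_{i-1}|$.

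There is, however, a genuine gap in your lower bound. Your claim that $\Pb(S\succ I)=1-o(1)$ \emph{uniformly on} $\cG_N=\{\max_k I_k\le K\sqrt N\}$ is false near the endpoints: under the tilt the walk $S$ starts and ends at $0$ and its first and last increments are $O(1)$, whereas an $I\in\cG_N$ may have $I_1$ (or $I_{N-1}$) as large as $K\sqrt N$. For such $I$ the event $\{S_1>I_1\}$ has probability $e^{-c\sqrt N}$, not $1-o(1)$. So a pointwise-in-$I$ bound on $\Pi_N(I,q)$ cannot work with this choice of good set.

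The paper repairs exactly this boundary issue by replacing $\cG_N$ with the constraint $\{I_k\le \tilde f_I(k)\}$ where $\tilde f_I$ is a piecewise-linear tent that grows \emph{linearly} (slope $\gamma$) out of both endpoints and reaches height $\Theta(N)$ in the middle; symmetrically, $S$ is forced above a slightly higher tent $\tilde f_S$, so that $\{S\ge\tilde f_S,\,I\le\tilde f_I\}\subset\{S\succ I\}$ deterministically. The key local LDP (your ``main technical difficulty'') is then proved as Proposition~\ref{prop:DNLSI}: the constraint $\{S\ge\tilde f_S\}$ costs only a constant factor, which requires a delicate three-piece decomposition (two boundary layers of length $N^{1/3}$ handled by uniform tilts, a bulk piece by the non-uniform tilt). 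Finally, to remove the artificial constraint $\{I\le\tilde f_I\}$ without working under the pinned-and-penalized measure directly, the paper uses an FKG inequality on the bridge $B_N^{0,+}$: both $I\mapsto e^{\delta\sum\ind_{\{I_k=0\}}}e^{-\partial_q g\,A_N(I)/N}$ and $I\mapsto\ind_{\{I\le\tilde f_I\}}$ are non-increasing, so the constraint decouples and its cost reduces to $\Pbb(I\le\tilde f_I\mid B_N^{0,+})$, which is $\ge 1-\eps$ by an invariance-principle argument. This FKG step is what replaces your proposed ``Gaussian tail bounds under the bridge with pinning reward $\delta$'', and is both cleaner and avoids the endpoint problem.
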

 \medskip

\subsubsection{Outline of the proof of Proposition \ref{prop:estimD}} Obtaining an upper bound on $D^{\,\circ}_{N,q}$ is rather straightforward: we will simply remove the events
$\{S \succ I\}$ and $\{S_i\geq 0, \forall \,  0\leq i\leq N+1\}$ from the definition of 
$D^{\,\circ}_{N,q}$ and then apply large deviation estimate for the signed area $A_{N+1}(S)$ of the upper envelope. The proof of the lower bound is more involved and will consist in 
\begin{enumerate}
\item getting rid of the event $\{S \succ I\}$ by first introducing a stronger constraint, i.e., 
$S$  (resp. $I$) remains above (resp. below) some deterministic curves. 
\smallskip

\item conditioning on $I$ that remains below the deterministic curve introduced in (1) and integrating over $S$. Then, with the help of large deviation estimates for random walks enclosing very large area, proving that the events $\{S\in B_{N+1}^{0,+}\}$ and $\{A_{N+1}(S)=qN^2+A_N(I)\}$ have a probability $\frac{1}{N^2} e^{-Ng(q,0)} e^{-\partial_qg(q,0) \frac{A_N(I)}{N}}$ and that the cost for $S$ to remain above the deterministic curve introduced in (1) is bounded from below by a constant, 
\smallskip
  
\item getting rid of the additional constraint on $I$ introduced in (1) with an {\rm FKG} inequality, by observing that the random walk $I$ whose law is penalized exponentially by $-\frac{1}{N} A_N(I)$ typically remains below its unpenalized counterpart.  
\end{enumerate}

\subsubsection{Proof of the lower bound in \eqref{eq:lbDNL:3}} \label{sec:LB}

\subsubsection{Integrating on the upper envelope $S$}
To begin with, we constrain the upper and lower envelopes to remain respectively above and below some fixed curves. In particular this allows us to handle the condition $S \succ I$. Define $f(t):=\gga (t\wedge (1-t)), t\in[0,1]$, and
\begin{equation}\label{def:tildefSI}\begin{array}{ll}
\tilde f_S (k) \;:=\; (N+1) f\big(\tfrac k{N+1}\big) + K_S\;, & \quad\forall\pt 1\leq k\leq N\;,\\
\tilde f_I (k) \;:=\; N f\big(\tfrac kN\big) + K_I\;, & \quad \forall \pt 1\leq k\leq N-1\;,
\end{array}\end{equation}
where $\gga, K_S, K_I>0$ are constants. If we constrain $S_k$, $1\leq k\leq N$ (resp. $I_k$, $1\leq k\leq N-1$) to remain above $\tilde f_S(k)$ (resp. below $\tilde f_I(k)$), and provided that $K_S-K_I\geq1+\gga$, then we have
\begin{equation}\label{def:tildefSI:bis}
\{S \succ I\}\;\supset \; \left\{ \begin{aligned} &S_k\geq \tilde f_S(k),\: \forall\, 1\leq k \leq N\,; \\ &\;I_k\leq \tilde f_I(k),\: \forall\, 1\leq k \leq N-1\end{aligned} \right\} \,.
\end{equation}
Geometrically, 
$\tilde f_\cdot(k)$ is a piecewise linear curve above the wall of order $N$ when $k=N/2$, and constant at its ends. The constants $K_S, K_I$ are only there for technical purposes and are mostly irrelevant when $N$ is large (see Figure~\ref{fig:courbef}). More precisely, we will fix $\gga$ such that Proposition~\ref{prop:DNLSI} below applies (for any $K_S>0$), then we fix $K_I$ when applying Lemma~\ref{lem:FKG}, and we finally fix $K_S>K_I$ such that \eqref{def:tildefSI:bis} holds. 




\begin{figure}[h]
 \centering
 \includegraphics[width=0.7\linewidth]{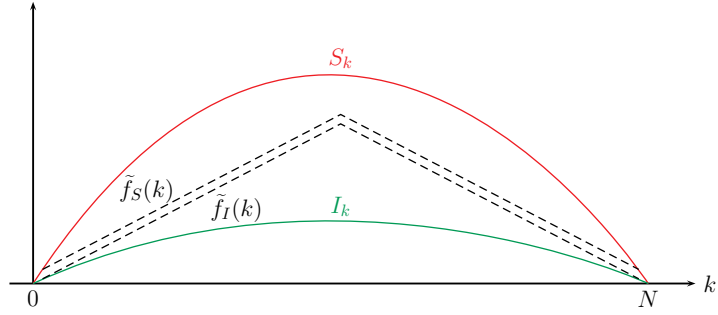}
 \caption{\footnotesize Representation of the constraints on $S$, $I$, for $N$ large. The piecewise linear curves $\tilde f_S$ and $\tilde f_I$ are above the wall, reaching a height of order $N$ in the middle and bounded at both ends. $S$ is constrained to remain above $\tilde f_S$, and $I$ below $\tilde f_I$.
 }
 \label{fig:courbef}
\end{figure}

We also add the constraint $\{A_N(I)\leq C_A N^{3/2}\}$, where $C_A>0$ is some constant which will also be fixed when applying Lemma~\ref{lem:FKG} below. Let us condition $D^{\,\circ}_{N,q}$ over the trajectory $I$ (recall \eqref{def:DNL}), so that we separate the constraints on $I$ and $S$ and we obtain the lower bound
\begin{equation}\label{eq:lbDNL:1}
D^{\,\circ}_{N,q}\;\geq\; \Ebbzero \left[e^{\gd\sum_{k=1}^{N}\ind_{\{I_k=0\}}} \ind_{\{I\in B^{0,+}_N\}} \ind_{\{A_N(I)\leq C_A N^{3/2}\}} \ind_{\{I_k\leq \tilde f_I(k),\: \forall\, 1\leq k < N\}}\, \widehat D^{\,\circ}_{N,q}(I) \right]\,,
\end{equation}
with:
\begin{equation}\label{def:DNLSI}
\widehat D^{\,\circ}_{N,q}(I)\;:=\;\Ebbzero \left[ \ind_{\{S_{N+1}=0\}} \ind_{\{A_{N+1}(S)=A_N(I)+qN^2\}} \ind_{\{S_k\geq \tilde f_S(k),\: \forall\, 1\leq k < N+1\}} \middle| \, I\right]\,.
\end{equation}

Let us fix $I=(I_k)_{k=1}^N$ which satisfies all constraints in~\eqref{eq:lbDNL:1}. Proposition~\ref{prop:DNLSI} below allows us to estimate $\widehat D^{\,\circ}_{N,q}(I)$ up to uniform constant factors. We postpone the proof to Section~\ref{proof:DNLSI}.

\begin{proposition}\label{prop:DNLSI} Let $\eps>0$ and $q_1<q_2,p_1<p_2\in\R$. There exists $\gga>0$ such that for all $K_S>0$, there exist $C_2>C_1>0$ and $N_0\in \N$ such that for every $N\geq N_0$, one has
\begin{equation}\begin{aligned}
\frac{C_{\cntc}}{N^2} e^{-Ng(q,p)}&\;\leq\; \Ebbzero\Big[ \ind_{\{X_{N}=pN,\, A_N(X)=q N^2\}} \ind_{\{X_k\,\geq\, \tilde f(k),\,\forall\, 1\leq k < N\}} \Big]\\
&\;\leq\; \Ebbzero\Big[ \ind_{\{X_{N}=pN,\, A_N(X)=q N^2\}} \Big] \;\leq\; \frac{C_{\cntc}}{N^2} e^{-Ng(q,p)}\;,
\end{aligned}\end{equation}
where we define $\tilde f(k):=N f(\frac kN) + k\pt p + K_S$, and these bounds hold uniformly in $q\in [q_1,q_2]\cap \frac{\bbZ}{2N^2}$, $p\in[p_1,p_2]\cap \frac{\bbZ}{N}$ satisfying $p\leq 2q-\eps$.
\end{proposition}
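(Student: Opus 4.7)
\emph{Setup and upper bound.} The middle inequality is trivial, and the rightmost follows from a two-dimensional large-deviation local CLT. Exponentially tilt $\mathbf{P}_\beta$ by the parameters $\tilde{\mathbf{h}}^{q,p}=(h_0,h_1)$ determined by $\nabla\mathcal{L}_\Lambda(\tilde{\mathbf{h}}^{q,p})=(q,p)$: under the tilted law $\widetilde{\mathbb{P}}$, whose $k$-th increment is tilted by an affine function of $k/N$ chosen so that $(X_N/N, A_N(X)/N^2)$ is centered at $(p,q)$, the Radon--Nikodym factor evaluates on the event $\{X_N=pN,\,A_N=qN^2\}$ to $e^{-Ng(q,p)+O(1)}$ by the Legendre identity \eqref{def:gqp}. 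A local CLT for $(X_N,A_N(X))$ under $\widetilde{\mathbb{P}}$ (uniformly valid on the compact parameter set, since the tilted increments retain uniformly bounded exponential moments) yields $\widetilde{\mathbb{P}}(X_N=pN,\,A_N=qN^2)\asymp 1/N^2$. This gives the upper bound, and reduces the lower bound to showing that
\begin{equation*}
\widetilde{\mathbb{P}}\big(X_k\geq\tilde f(k)\:\forall\,1\leq k<N\,\big|\,X_N=pN,\,A_N(X)=qN^2\big)\geq c'>0
\end{equation*}
uniformly in $(q,p,N)$.

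\emph{Geometric input.} Let $\phi:[0,1]\to\mathbb{R}$ denote the mean trajectory under the tilt, a smooth function satisfying $\phi(0)=0$, $\phi(1)=p$, $\int_0^1\phi=q$, whose second derivative has constant sign determined by $h_0$ through strict convexity of $\mathcal{L}$. The linear case $\phi(s)=ps$ corresponds to $h_0=0$ and $q=p/2$, so the hypothesis $p\leq 2q-\varepsilon$ forces $|h_0|\geq\eta(\varepsilon)>0$ uniformly on the compact domain and makes $\phi$ strictly concave with a uniform lower bound on $|\phi''|$. Consequently $\psi(s):=\phi(s)-ps$ is strictly concave with $\psi(0)=\psi(1)=0$, and $\psi(1/2)=\phi(1/2)-p/2$ is uniformly bounded below by some $\mu>0$ on the compact parameter set. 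Concavity then yields $\psi(s)\geq 2\psi(1/2)(s\wedge(1-s))$; choosing $\gamma<\mu$, we obtain the deterministic inequality
\begin{equation*}
N\phi(k/N)-kp-Nf(k/N)\;\geq\;\psi(1/2)\,(k\wedge(N-k)),\qquad 0\leq k\leq N.
\end{equation*}

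\emph{Conditional constraint in the bulk.} Fix $K_S>0$. Writing $X_k-\tilde f(k)=[X_k-N\phi(k/N)]+[N\phi(k/N)-kp-Nf(k/N)]-K_S$ and using the geometric input, the constraint becomes $X_k-N\phi(k/N)\geq K_S-\psi(1/2)\,(k\wedge(N-k))$. Set $M:=\lceil 2K_S/\psi(1/2)\rceil$: for $k\in[M,N-M]$ the right-hand side is at most $-K_S$, so the bulk constraint is implied by a Gaussian-type fluctuation event. A functional CLT for random-walk bridges with prescribed endpoint and area (derivable from the same 2D local CLT applied on dyadic subintervals) shows that $(X_{\lfloor Nt\rfloor}-N\phi(t))/\sqrt N$ converges under the conditional tilted law to a continuous centered Gaussian process on $[0,1]$, so the bulk event $\{X_k\geq\tilde f(k),\,M\leq k\leq N-M\}$ has probability bounded below uniformly in $N$ by sup-norm tightness.

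\emph{Boundary handling; main obstacle.} The main technical difficulty lies in the two boundary regions $k<M$ and $k>N-M$, where the geometric input no longer dominates $K_S$ and one must genuinely force the walk above the barrier. The strategy is to condition further on the pair $(X_M,X_{N-M})$: by the local CLT they fall with uniformly positive density in any bounded window around their typical values $(M\phi'(0)+O(1),\,pN-M\phi'(1)+O(1))$. Each boundary piece is then a finite-horizon event for a walk of $M$ steps with prescribed endpoints, required to stay above the barrier $\tilde f(k)=\gamma k+kp+K_S$ for $1\leq k\leq M$. The key uniform estimate is $\phi'(0)>p+\gamma$: indeed $\psi'(0)=\phi'(0)-p\geq 2\psi(1/2)\geq 2\gamma>\gamma$ by concavity of $\psi$ and our choice of $\gamma$, so the typical $X_M$ exceeds $\tilde f(M)$ by a uniform positive amount, and full support of $\mathbf{P}_\beta$ produces a path meeting the constraint at every intermediate $k$ with positive probability independent of $N$ and $(q,p)$. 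A symmetric argument handles $[N-M,N-1]$. Assembling bulk and two boundary pieces yields the desired uniform lower bound; uniformity in $(q,p,N)$ follows from the continuity of $(\phi,\psi,g,\tilde{\mathbf{h}}^{\cdot,\cdot})$ and the compactness of the parameter set $[q_1,q_2]\times[p_1,p_2]\cap\{p\leq 2q-\varepsilon\}$.
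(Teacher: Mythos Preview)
Your upper bound and the geometric input (concavity of the tilted mean profile $\phi$, hence $\psi(s)\geq 2\psi(1/2)\,(s\wedge(1-s))$) are correct, and the overall strategy is the right one. The gap is in the bulk step. Writing $Y_k := X_k - N\phi(k/N)$, the constraint on $[M,N-M]$ reads $Y_k \geq K_S - \psi(1/2)\,(k\wedge(N-k))$; a functional CLT at scale $\sqrt N$ only yields $\sup_k |Y_k| \leq C\sqrt N$ with high probability, which implies the constraint when $k\wedge(N-k) \gtrsim \sqrt N$ but says nothing about the intermediate range $M \leq k\wedge(N-k) \lesssim \sqrt N$. There the barrier sits at depth $-\psi(1/2)\,k$ while $Y_k$ has fluctuations of order $\sqrt k$, and under your $\sqrt N$ rescaling both collapse to zero, so weak convergence is uninformative. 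What is actually needed is a pointwise exponential bound of the type $\mathbf P\big(Y_k < -\psi(1/2)\,k + K_S \,\big|\, X_N=pN,\,A_N=qN^2\big) \leq Ce^{-ck}$ together with a union bound, not a tightness statement; and obtaining such a bound under the \emph{conditioned} law is itself a nontrivial step. A secondary imprecision: conditioning only on $(X_M,X_{N-M})$ does not decouple the three pieces, since the area constraint is global---you must also fix the areas of the boundary pieces (or, since your $M$ is bounded, the full boundary paths).

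The paper circumvents the intermediate-scale issue by taking boundary pieces of \emph{growing} length $u_N = N^{1/3}$ rather than fixed $M$, and by specifying both their endpoint and their area so as to genuinely decouple the decomposition. At the interface the barrier then lies at depth $-\zeta N^{1/3}$, far below the $O(N^{1/6})$ fluctuations of the middle piece at that point, and the middle is controlled by a union bound together with an exponential Markov inequality under the non-uniform tilt (Lemmas~\ref{lem:prfDNLSI} and~\ref{lem:prfDNLSI:markov}). The two boundary pieces are handled under separate \emph{uniform} tilts $\tilde{\mathbf P}_{\delta_1},\tilde{\mathbf P}_{\delta_2}$ with $\delta_1=\tilde h_0+\tilde h_1$ and $\delta_2=-\tilde h_1$, whose drifts are shown via \eqref{eq:ineq:pcL'} to strictly exceed the barrier slope; the tilting factors from the three pieces are then recombined and shown to cancel up to $O(1)$.
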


\begin{remark} $(i)$ Although we will only use Proposition~\ref{prop:DNLSI} with $p=0$, we prove it in the general case ($p\neq0$) because such estimates will be useful when studying the model without the single-bead restriction.\\
$(ii)$ The assumption $q\geq\frac{p+\eps}2$ ensures that the trajectory has to enclose a large area, so it is prompted to draw a concave shape above the straight line from $(0,0)$ to $(N, pN)$; hence it remains above the curve $\tilde f$ provided that $\gga$ is sufficiently small. Notice that the assumption $q=\frac p2$ would be satisfied by a linear trajectory from $(0,0)$ to $(N, pN)$, and a lower value of $q$ would force the trajectory to be convex (below the linear trajectory). \\
$(iii)$ To prove Proposition~\ref{prop:DNLSI}, we first recall from \cite{CNP16} an estimate of the expectation without the constraint $\{X_k\,\geq\, \tilde f(k),\,\forall\, 1\leq k < N\}$, then we prove that this constraint only costs up to a constant factor. In particular this strongly reinforces \cite[Prop.~2.5]{CNP16}, since it yields that 
there exists $C_\cntc>0$ such that uniformly in $N\geq N_0$ and in  $q \in [q_1,q_2]\cap \frac{\N}{2N^2}$ with $q_1>0$,
\begin{equation*}
\Pbbzero(X_i>0, 0<i<N\, |\, A_N=qN^2, X_N=0)\geq C_{\arabic{cst}}\;.
\end{equation*}
\end{remark}

Recall \eqref{def:DNLSI}, and let $p'=0$, $q':=(qN^2+A_N(I))/(N+1)^2$. Under our assumptions, there is a compact subset $[q'_1,q'_2]\subset(0,\infty)$ and $N_0\in\N$ such that $q'\in[q'_1,q'_2]$ for all $N\geq N_0$, $q\in[q_1,q_2]$ and $I$ satisfying all constraints from \eqref{eq:lbDNL:1} (recall that $A_N(I)=O(N^{3/2})$). Moreover we have $0=p'<q'_1/2$, hence we can apply Proposition~\ref{prop:DNLSI} to $\widehat D^{\,\circ}_{N,q}(I)$, and we obtain the uniform bound for $N\geq N_0$,
\begin{equation}\label{eq:lbDNLSI:1}
\widehat D^{\,\circ}_{N,q}(I)\;\geq\; \frac{C_{3}}{N^2} e^{-Ng(q', 0)}\;.
\end{equation}
Now recall that $g$ is $\cC^2$ and convex, so there exists $C_\cntc>0$ such that
\begin{equation*}
g(q',0)\;\leq\; g(q,0) + \partial_q g(q,0) (q'-q) + C_{\arabic{cst}} (q'-q)^2 \;,
\end{equation*}
uniformly in $q\in[q_1,q_2]$ and $q'\in[q'_1,q'_2]$. Recall that $A_N(I)=O(N^{3/2})$, and notice $q'-q=\frac{A_N(I)}{N^2}+O(1/N)$. 
Thereby \eqref{eq:lbDNLSI:1} becomes
\begin{equation}\label{eq:lbDNLSI:2}
\widehat D^{\,\circ}_{N,q}(I)\;\geq\; \frac{C_\cntc}{N^2} e^{-Ng(q, 0) -\partial_q g(q,0)\frac{A_N(I)}N}\:,
\end{equation}
where $C_{\arabic{cst}}$ is uniform in $q\in[q_1,q_2]\cap \frac{\N}{2N^2}$ and $I$. Plugging \eqref{eq:lbDNLSI:2}  into \eqref{eq:lbDNL:1} we obtain
\begin{equation}\label{eq:lbDNL:2}
\begin{aligned}
D^{\,\circ}_{N,q}\,\geq\, &\frac{C_{\arabic{cst}}}{N^2} e^{-Ng(q,0)} \, \Ebbx \bigg[e^{\gd\sum_{k=1}^{N}\ind_{\{I_k=0\}}} \ind_{\{I\in B^{0,+}_N\}} \ind_{\{A_N(I)\leq C_A N^{3/2}\}} \\
&\qquad \qquad \qquad \quad \times \ind_{\{I_k\leq \tilde f_I(k),\: \forall\, 1\leq k < N\}}\,  e^{-\partial_q g(q,0)\frac{A_N(I)}N}\bigg]\:.
\end{aligned}
\end{equation}
Recall that $g$ is non-negative, and let us point out that $\partial_q g(q,0)=\tilde h_0^{q,0}>0$ for all $q>0$ (recall \eqref{eq:gradg}, and see \cite[Rem.~5.5]{CNP16} or Section~\ref{proof:DNLSI}).

\subsubsection{Relaxing some constraints on the lower envelope $I$} Our goal now is to drop the events  $\{I_k\leq \tilde f_I(k),\: \forall\, 1\leq k < N\}$ and $\{A_N(I)\leq C_A N^{3/2}\}$ in the r.h.s. in \eqref{eq:lbDNL:2} by paying up to a constant factor uniform in $N\geq N_0$ and $q\in [q_1,q_2]$. To that aim we use an {\rm FKG} inequality. Indeed, notice that the functions\begin{align*}
 x=(x_k)_{k=1}^N \;&\longmapsto\; e^{\gd\sum_{k=1}^{N}\ind_{\{x_k=0\}}} e^{-\partial_q g(q,0)\frac{A_N(x)}N}\:,\\
 \text{and} \qquad\qquad\qquad x \;&\longmapsto\; \ind_{\{x_k\leq \tilde f_I(k),\: \forall\, 1\leq k < N\}} \ind_{\{A_N(x)\leq C_A N^{3/2}\}}\;,
\end{align*}
are both bounded and \emph{non-increasing} on $B_N^{0,+}$, where we say that $f:B_N^{0,+}\to\R$ is non-increasing if for all $x,y\in B_N^{0,+}$ such that $x_k\leq y_k,\forall\,1\leq k\leq N$, one has $f(x)\geq f(y)$. Thereby the {\rm FKG} inequality claimed in Proposition~\ref{prop:FKG} yields that
\begin{equation}\label{eq:FKG:2}\begin{aligned}
&\Ebbzero\bigg[  e^{\gd\sum_{k=1}^{N}\ind_{\{X_k=0\}}}\ind_{\{X_k\leq \tilde f_I(k),\: \forall\, 1\leq k < N;A_N(x)\leq C_A N^{3/2}\}}
e^{-\partial_q g(q,0)\frac{A_N(X)}N} \Big| X\in B^{0,+}_N \bigg]\\
&\quad\geq \Pbbzero\big(X_k\leq \tilde f_I(k),\: \forall\, 1\leq k < N \,;\, A_N(X)\leq C_A N^{3/2}\,\big|\, X\in B^{0,+}_N\big)\\
&\qquad \times \Ebbzero\Big[ e^{\gd\sum_{k=1}^{N}\ind_{\{X_k=0\}}}  e^{-\partial_q g(q,0)\frac{A_N(X)}N} \,\Big|\, X\in B^{0,+}_N \Big].
\end{aligned}\end{equation}
Finally, the first factor of the r.h.s. is bounded from below by some constant (close to 1) by the following lemma, which is proven in Section~\ref{proof:FKG}.
\begin{lemma}\label{lem:FKG} Let $\eps,\gga>0$. There exist $C_A, K_I>0$ and $N_0\in\N$ such that
\begin{equation}\label{bdcurve}
\Pbbzero\big(X_k\leq \tilde f_I(k),\,\forall\, 1\leq k < N\,; A_N(X)\leq C_A N^{3/2} \,\big|\, X\in B^{0,+}_N\big) \:\geq\:1-\eps\;,
\end{equation}
for all $N\geq N_0$.
\end{lemma}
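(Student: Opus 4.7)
The plan is to bound the complementary probability in \eqref{bdcurve} by the union bound
$$
\Pbbzero\bigl(\{X_k>\tilde f_I(k)\text{ for some }1\leq k<N\}\cup\{A_N(X)>C_A N^{3/2}\}\,\big|\,X\in B^{0,+}_N\bigr)\leq \eps,
$$
treating the two complements separately and choosing $C_A$ large to control the area event, then $K_I$ large (independently of $C_A$) to control the tent-crossing event. Both estimates rely on the fact that, under the conditioning $\{X\in B^{0,+}_N\}$, the walk behaves on scale $\sqrt{N}$ like a Brownian excursion, whereas the tent $\tilde f_I$ has height of order $N$ in the bulk (thus is very easily satisfied there) and of order $K_I$ near the endpoints (where a separate finite-dimensional argument is needed).

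\textbf{Area event.} The classical invariance principle for random-walk bridges conditioned to stay non-negative (applicable since $\Pbb$ has mean zero, finite variance $\sigma_\gb^2$, and exponential tails) gives that $(N^{-1/2}X_{\lfloor tN\rfloor})_{t\in[0,1]}$ converges in distribution under $\Pbbzero(\cdot\mid X\in B^{0,+}_N)$ to $\sigma_\gb e$, with $e$ a standard Brownian excursion. Uniform integrability of $N^{-3/2}A_N(X)$ conditional on $X\in B^{0,+}_N$ follows from the $L^p$-tail control provided by the exponential tails of the $\Pbb$-marginals combined with local CLT estimates for the one-point conditional density, so $\Ebbzero[N^{-3/2}A_N(X)\mid X\in B^{0,+}_N]\to \sigma_\gb\int_0^1\Eb[e(t)]\dd t$, and Markov's inequality delivers $\Pbbzero(A_N(X)>C_AN^{3/2}\mid\cdot)\leq\eps/2$ for $C_A$ large.

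\textbf{Tent event.} Using a union bound I would split the index set into a bulk regime $k\in[\eta N,(1-\eta)N]$ and symmetric boundary regimes with $\eta>0$ small but fixed. In the bulk $\tilde f_I(k)\geq\gga\eta N$, and a sub-Gaussian tail bound for the one-point conditional marginal $X_k\mid\{X\in B^{0,+}_N\}$, obtained through the reflection principle and the local CLT applied to the numerator and denominator of the conditional density, yields a total contribution of order $Ne^{-cN}$, which is negligible. In the boundary regime $k\leq\eta N$ one has $\tilde f_I(k)=\gga k+K_I$, and the same analysis gives that $X_k$ is sub-Gaussian conditionally with variance proxy of order $k(N-k)/N\leq k$, so that
$$
\Pbbzero\bigl(X_k>\gga k+K_I\,\big|\,X\in B^{0,+}_N\bigr)\leq C\exp\!\bigl(-c(\gga k+K_I)^2/k\bigr),
$$
uniformly in $N$. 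Summing over $k\geq 1$ produces a bound that vanishes as $K_I\to\infty$, and the symmetric regime $k\geq(1-\eta)N$ is handled identically by the time-reversal symmetry of the bridge.

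\textbf{Main obstacle.} The delicate point will be the very small $k$ regime of the boundary sum, where $k=O(1)$ and $\tilde f_I(k)$ is essentially the constant $K_I$, because the Gaussian-type tail above is driven by $k(N-k)/N$ which is only $O(k)$ and gives no improvement in $N$. Here one needs uniform (in $N$) control on the joint distribution of $(X_1,\ldots,X_m)$ conditioned on $\{X\in B^{0,+}_N\}$ for any fixed $m$. The natural tool is the Doob $h$-transform associated with the renewal function of the descending ladder-height process of $X$, which identifies the limiting law as $N\to\infty$ with that of $X$ conditioned to stay non-negative forever; the exponential tails of $\Pbb$ then yield sub-exponential tails for the limiting marginal of $X_k$ with constants uniform in $N$ (by tightness plus a uniform dominated bound on the $h$-transform), making the small-$k$ sum controllable by choosing $K_I$ sufficiently large. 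This uniformity is the key technical point where some care is needed.
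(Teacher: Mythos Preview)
Your strategy is sound and would work, but the paper's proof is considerably more streamlined and avoids both the bulk/boundary split and the $h$-transform machinery you flag as the main obstacle.

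The paper first replaces the area event by the stronger event $\{\max_{k\leq N} X_k\leq C_A\sqrt{N}\}$, which implies $A_N(X)\leq C_A N^{3/2}$ and is controlled directly by the invariance principle toward the Brownian excursion (no uniform integrability needed). It then bounds the probability of $\{X_k>\tilde f_I(k)\text{ for some }k\}\cap\{\max\leq C_A\sqrt{N}\}$ by a union bound over $k\leq N/2$ (using time-reversal symmetry for the other half). The key simplification is that, once one is on $\{X_k=z\leq C_A\sqrt{N}\}$, the Caravenna--Chaumont estimate \eqref{eq:asymp:PosBB} gives the uniform ratio bound $\Pbbz(B^{0,+}_{N-k})/\Pbbzero(B^{0,+}_N)\leq Cz$ for all $1\leq k\leq N/2$. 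This reduces the conditional tail at time $k$ to the \emph{unconditional} quantity $\Ebbzero[X_k\ind_{\{X_k>\gga k+K_I\}}]$, which a one-line exponential Markov inequality bounds by $C\,k\,e^{-k(\gl\gga-\cL(\gl))}e^{-\gl K_I}$ for small $\gl>0$. Summing over $k$ gives a finite constant times $e^{-\gl K_I}$, and one takes $K_I$ large.

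Compared with your route, this bypasses the need for a sub-Gaussian bound on the conditional marginal (your appeal to the ``reflection principle'' is not quite available here since the $\Pbb$-walk is not nearest-neighbour), treats all $k$ uniformly without a small-$k$ regime, and does not require any $h$-transform or conditioning-to-stay-positive-forever argument. Your approach has the advantage of being more robust in principle, but the ratio bound via \eqref{eq:asymp:PosBB} is the sharper and more economical tool in this setting.
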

By recollecting \eqref{eq:lbDNL:2} and \eqref{eq:FKG:2}, and provided that $C_A, K_I$ are sufficiently large to apply Lemma~\ref{lem:FKG}, we conclude that 
for every $N\geq N_0$ and every $q\in [q_1,q_2]\cap \frac{\N}{2N^2}$ we have
\begin{equation}\label{eq:lbDNL:4}
D^{\,\circ}_{N,q} \;\geq\; \frac{C_\cntc}{N^2} e^{-Ng(q,0)} \,\Ebbzero \left[e^{\gd\sum_{k=1}^{N}\ind_{\{I_k=0\}}} \ind_{\{I\in B^{0,+}_N\}} e^{-\partial_q g(q,0)\frac{A_N(I)}N} \right],
\end{equation}
which completes the proof of the lower bound.

\subsubsection{Proof of the upper bound in \eqref{eq:lbDNL:3}}\label{sec:UB}
We recall \eqref{def:DNL} and we bound $D^{\,\circ}_{N,q}$ from above by relaxing partially the  constraints on $S$ -more precisely the constraints $\{S \succ I\}$ and $\{S_i\geq0,\pt \forall \pt 1\leq i\leq N\}$. Therefore, 
\begin{equation}\label{def:DNLalt}\begin{aligned}
D^{\,\circ}_{N,q}\,&\leq\, \Ebbzero\left[e^{\gd\sum_{k=1}^{N}\ind_{\{I_k=0\}}} 
\ind_{\{I\in B^{0,+}_N\}}\ind_{\{S_{N+1}=0\}}
\ind_{\{A_{N+1}(S)=A_N(I)+q N^2\}}\right],\\
&=\,  \Ebbzero\left[e^{\gd\sum_{k=1}^{N}\ind_{\{I_k=0\}}} 
\ind_{\{I\in B^{0,+}_N\}}\, \tilde D^{\,\circ}_{N,q}(I) \right]. 
\end{aligned}\end{equation}
with
\[
\tilde D^{\,\circ}_{N,q}(I):=\Pbbzero\Big[S_{N+1}=0, A_{N+1}(S)=A_N(I)+qN^2 \,\Big| \pt I\pt \Big]\;.
\]
In order to get uniform bounds on $\tilde D^{\,\circ}_{N,q}(I)$ with Proposition~\ref{prop:DNLSI}, we need to drop those $I$ sweeping a too large area. To that aim, for any $c>0$
we rewrite \eqref{def:DNLalt} as $\tilde D^{\,\circ}_{N,q}\leq R^1_{N,q}(c)+R^2_{N,q}(c)$ with
\begin{align*}
R^1_{N,q}(c)&:= \Ebbzero\left[e^{\gd\sum_{k=1}^{N}\ind_{\{I_k=0\}}} 
\ind_{\{I\in B^{0,+}_N,\,  A_N(I)\leq cN^2\}}\, \tilde D^{\,\circ}_{N,q}(I) \right]\;,\\
R^2_{N,q}(c)&:= \Ebbzero\left[e^{\gd\sum_{k=1}^{N}\ind_{\{I_k=0\}}} 
\ind_{\{I\in B^{0,+}_N,\, A_N(I)> c N^2 \}}\, \tilde D^{\,\circ}_{N,q}(I) \right]\;,
\end{align*} 
and we write the very crude bound 
\begin{align}
\nonumber R^2_{N,q}(c)&\leq  e^{\delta N} \Pbbzero\left[A_N(I)> c N^2 \right]\leq e^{\delta N} \Pbbzero\left[\max_{1\leq i\leq N} I_i> c N \right]\\
&\leq e^{\delta N} \Pbbzero\left[\textstyle \sum_{i=1}^N |I_i-I_{i-1}|> c N \right]\;.
\end{align}
Then, we set $M:=1+\max\{g(q,0), q\in [q_1,q_2]\}$ and we use the fact that $(I_i-I_{i-1})_{i\geq 1}$ are i.i.d. with finite small exponential moments to conclude (via a Markov exponential inequality) that there exists a $c_M>0$ such that  for every $N\geq N_0$ and $q\in [q_1,q_2]\cap\frac{\N}{2N^2}$, 
\begin{equation}\label{boundM}
R^2_{N,q}(c_M)\leq e^{-M N}.
\end{equation}

Let us now consider $R^{1}_{N,q}(c_M)$  and use Proposition~\ref{prop:DNLSI} to assert that there exists $C_\cntc>0$ and $N_0\in \N$ (depending on $M$)  such that  for every $N\geq N_0$, $q\in [q_1,q_2]\cap \frac{\N}{2N^2}$ and  $I$ satisfying  $A_N(I)\leq  c_M N^2$ we have 
\[
\tilde D^{\,\circ}_{N,q}(I)\leq   \frac{C_{\arabic{cst}}}{N^2} e^{-N g(q', 0)}\;,
\]
with $q'=(qN^2+A_N(I))/(N+1)^2$. Recall that $g$ is convex, therefore we can bound $g(q',0)$ from below with
\begin{equation*}
g(q',0) \;\geq\; g(q,0) + \partial_{q} g(q,0) (q'-q)\;.
\end{equation*}
Moreover $q'-q=\frac{A_N(I)}{N^2}+O(1/N)$, thus,
\begin{align}
\nonumber R^1_{N,q}(c_M)&\leq \frac{C_\cntc}{N^2} e^{-g(q,0) N} \Ebbzero\left[e^{\gd\sum_{k=1}^{N}\ind_{\{I_k=0\}}} 
\ind_{\{I\in B^{0,+}_N,\pt A_N(I)\leq c_M N^2\}}\, e^{-\partial_q g(q,0) \frac{A_N(I)}{N}}\right]\\
& \label{eq:boundR1} \leq \frac{C_{\arabic{cst}}}{N^2} e^{-g(q,0) N} E^{\,\circ}_{N,q}\;,
\end{align}
where we recall \eqref{def:Enq}.

Our proof will be complete once we show that for $N$ large enough and for every $q\in [q_1,q_2]\cap \frac{\N}{2N^2}$,
the r.h.s. in \eqref{boundM} (i.e., $e^{-MN}$) is not larger than the r.h.s. in \eqref{eq:boundR1}. To that aim, we write the lower bound 
\begin{equation}\label{eq:DNLSI:lbENq}
E^{\,\circ}_{N,q} \,\geq\, \Pbbzero(A_N(I)\leq C_AN^{3/2}\mid  I\in B_N^{0,+})\,\Pbbzero(I\in B_N^{0,+}) \, e^{-\max\{\partial_qg(q,0), q\in [q_1,q_2]\}  \sqrt{N}}\,,
\end{equation}
for any $C_A>0$, where we constrained the walk to have area at most $C_AN^{3/2}$. A direct consequence of Lemma~\ref{lem:FKG} is that there exist $C_A,C_\cntc>0$ such that for $N$ large enough, 
\begin{equation*}
\Pbbzero\big(A_N(I)\leq C_AN^{3/2}\,\big|\, I\in B_N^{0,+}\big) \geq  C_{\arabic{cst}}\;,
\end{equation*} 
(more generally this follows from an invariance principle on non-negative random walk bridges, see \eqref{eq:invarexcursion}). The second factor in \eqref{eq:DNLSI:lbENq} is bounded from below by $C_\cntc N^{-3/2}$ for some $C_{\arabic{cst}}$ (see \eqref{eq:asymp:PosBB}). 
Recalling that $q\mapsto\partial_qg(q,0)$ is continuous on $[a_1,a_2]$, we conclude that there exists $C_\cntc, c_1>0$ such that for $N$ large enough and 
every $q\in [q_1,q_2]\cap \frac{\N}{2N^2}$, we have
\begin{equation}\label{eq:lobdint}
E^{\,\circ}_{N,q} \geq \frac{C_{\arabic{cst}}}{N^{3/2}} \, e^{-c_1 \sqrt{N}}\;.
\end{equation}
Recalling that $M=1+\max\{g(q), q\in [q_1,q_2]\}>0$, it suffices to combine \eqref{boundM} with \eqref{eq:boundR1} and \eqref{eq:lobdint} to complete the proof of the upper bound in \eqref{eq:lbDNL:3}.

\subsection{Step 3 : area-penalized wetting model for $\gd> \tilde \gd_c(\gb)$}
In this section we give estimates on $E^{\,\circ}_{N,q}$ when $\gd>\tilde \gd_c(\gb)$ ---in particular in the phase $\cA\cC$. 
Notice that $E^{\,\circ}_{N,q}$ is the partition function of a wetting model with an additional pre-wetting term. Let $Z_{\wet,N}^{\gb,\gd}$ be the standart wetting partition function (without pre-wetting, see \eqref{def:Zwetting}). Such wetting models have already been studied extensively by mathematicians ---we provide well-known results on them in Proposition~\ref{prop:wetpoint}. Recall that $\tilde \gd_c(\gb)=\inf\{\gd>0,h_\gb(\gd)>0\}$ is the critical point of the wetting model $Z_{\wet,N}^{\gb,\gd}$, $h_\gb(\gd)$ is its free energy, and $\asymp$ is defined in \eqref{defcomp}.
\begin{prop}\label{prop:ENq:AC}
Let $q_2>q_1>0$, and assume $\gd>\tilde \gd_c(\gb)$. Then $E^{\,\circ}_{N,q}\asymp e^{h_\gb(\gd)N}$ uniformly in $q\in[q_1,q_2]$.
\end{prop}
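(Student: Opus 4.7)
\medskip

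\noindent\textbf{Proof plan.} The key observation is that $E^{\,\circ}_{N,q}$ can be factored as the standard wetting partition function times the expectation of the pre-wetting perturbation under the associated wetting measure. Namely, letting
\begin{equation*}
Z_{\wet,N}^{\gb,\gd} \;:=\; \Ebbzero\Big[e^{\gd\sum_{k=1}^{N}\ind_{\{X_k=0\}}} \ind_{\{X\in B^{0,+}_{N}\}}\Big]\,,
\end{equation*}
and $\bP_{\wet,N}^{\gb,\gd}$ the corresponding probability measure on $B^{0,+}_N$, we have
\begin{equation*}
E^{\,\circ}_{N,q} \;=\; Z_{\wet,N}^{\gb,\gd}\;\bE_{\wet,N}^{\gb,\gd}\Big[e^{-\partial_q g(q,0)\,A_N(X)/N}\Big]\,.
\end{equation*}
By the results in Appendix~\ref{app:wetting}, the hypothesis $\gd>\tilde\gd_c(\gb)$ yields $h_\gb(\gd)>0$ and the sharp asymptotics $Z_{\wet,N}^{\gb,\gd} \asymp e^{h_\gb(\gd)\, N}$ (this is independent of $q$). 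Thus it suffices to show that the second factor is bounded from above and from below by positive constants, uniformly in $q\in[q_1,q_2]$ and $N$ large.

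The upper bound is immediate since $\partial_q g(q,0) \geq 0$ and $A_N(X)\geq 0$ on $B^{0,+}_N$, hence the expectation is bounded above by $1$.

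For the lower bound, I would exploit the \emph{localization} of the wetting measure in the regime $\gd>\tilde\gd_c(\gb)$. Standard results for pinning/wetting models in the localized phase (renewal theory of contact times, exponentially tight excursion heights, cf.\ Appendix~\ref{app:wetting}) imply that there exists a constant $M=M(\gb,\gd)>0$ such that $\bE_{\wet,N}^{\gb,\gd}[X_k] \leq M$ uniformly in $1\leq k\leq N$ and $N$; summing over $k$ gives $\bE_{\wet,N}^{\gb,\gd}[A_N(X)] \leq M N$. By Markov's inequality, choosing $C:=2M$ yields
\begin{equation*}
\bP_{\wet,N}^{\gb,\gd}\big(A_N(X)\leq CN\big) \;\geq\; \tfrac{1}{2}\,,
\end{equation*}
uniformly in $N$. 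On the event $\{A_N(X)\leq CN\}$, the pre-wetting factor is bounded below by $e^{-C\,\partial_q g(q,0)}$. Since $q\mapsto \partial_q g(q,0)$ is continuous on the compact set $[q_1,q_2]$ (recall $g\in\cC^2$), it is bounded there by some constant $M'$, so that
\begin{equation*}
\bE_{\wet,N}^{\gb,\gd}\Big[e^{-\partial_q g(q,0)\,A_N(X)/N}\Big] \;\geq\; \tfrac{1}{2}\,e^{-C M'}\,,
\end{equation*}
uniformly in $q\in[q_1,q_2]$, which concludes the proof.

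The main (minor) obstacle will be establishing the uniform-in-$N$ bound on $\bE_{\wet,N}^{\gb,\gd}[X_k]$. This is classical: in the localized phase, the contacts with the wall form a positive-density renewal, the gaps have exponential tails, and the excursions between consecutive contacts have heights with uniform exponential tails under the tilted measure. All of these should be available (or easily derivable) from the material in Appendix~\ref{app:wetting}.
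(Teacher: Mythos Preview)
Your approach is correct and takes a genuinely different route from the paper. Both arguments share the easy upper bound $E^{\,\circ}_{N,q}\leq Z_{\wet,N}^{\gb,\gd}$, but for the lower bound the paper does not pass through the wetting polymer measure at all. Instead, it decomposes $E^{\,\circ}_{N,q}$ over the full contact set and, on each excursion of length $t$, bounds the pre-wetting factor via Jensen together with the (rather crude) estimate $\Ebbzero[A_t(I)\mid \tau=t,I_t=0]\leq \bar c\,t^{7/2}$. This produces a lower bound $\sum_{t_1+\cdots+t_r=N}\prod_i K_\gb(t_i)e^{\gd}e^{-c_1 t_i^{7/2}/N}$, which the paper analyses by introducing the perturbed inter-arrival law $Q_N(t)\propto K_\gb(t)e^{\gd}e^{-h_N t}e^{-c_1 t^{7/2}/N}$, proving that $h_\gb(\gd)-h_N=O(1/N)$, and then invoking a uniform renewal theorem from \cite{Ney81} to control $Q_N(N\in\tau)$ uniformly in $N$.

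Your argument is shorter and more conceptual: you exploit localization directly, arguing that in the pinned phase the wetting measure keeps the walk at bounded expected height, so $\bE_{\wet,N}^{\gb,\gd}[A_N(X)]=O(N)$ uniformly in $N$, after which a single Markov inequality suffices. This avoids the perturbed-renewal machinery and the appeal to \cite{Ney81} entirely. The trade-off is that the paper's argument is self-contained given only Appendix~\ref{app:wetting}, whereas your key input---the uniform bound $\bE_{\wet,N}^{\gb,\gd}[X_k]\leq M$---is not stated there and you would need to supply a short proof (e.g.\ via the size-biased excursion law, which has exponential tails since $h_\gb(\gd)>0$, combined with the $O(\sqrt t)$ height of a length-$t$ excursion and a mild argument to handle the constraint $X_N=0$). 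This is indeed classical, so the gap is purely expository, not mathematical.
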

\begin{proof} The upper bound is a straightforward consequence of the inequality $E^{\,\circ}_{N,q} \leq Z_{\wet, N}^{\gb,\gd}$ and Proposition~\ref{prop:wetpoint}~(ii). For the lower bound, we first bound $\partial_q g(q,0)$ by some $c>0$ for all $q\in[q_1,q_2]$ (recall that it is a continuous and positive function), then decompose trajectories in $E^{\,\circ}_{N,q}$ into excursions:
\begin{equation}\label{eq:lbENq:1}
E^{\,\circ}_{N,q} \;\geq\; \sum_{r=1}^N \sumtwo{1\leq t_1,\ldots,t_r\leq N,}{t_1+\cdots+t_r=N} \prod_{i=1}^r K_\gb(t_i)\,e^{\gd}\, \Ebbzero\left[ e^{-c\frac{A_{t_i}(I)}{N}}\,\middle|\, \tau=t_i, I_{t_i}=0\right]\;,
\end{equation}
where we define $\tau=\inf\{t\geq1;I_t\leq0\}$ and $K_\gb(t)=\Pbbzero(\tau=t,I_t=0)$, $t\geq1$ as in Appendix~\ref{app:wetting}.
Using Jensen's inequality, we can claim that 
\begin{equation}\label{jensen}
\Ebbzero\left[ e^{-c\frac{A_{t}(I)}{N}}\,\middle|\, \tau=t, I_{t}=0\right]\geq e^{-c\frac{1}{N} \Ebbzero\left[ A_{t}(I)\,\middle|\, \tau=t, I_{t}=0\right]}, \quad t\in \N.
\end{equation}
Moreover, by using the inequality $\Ebbzero( |A_{t}(I)|) \leq \Ebbzero(|I_1|) \sum_{j=1}^t j$  combined with \eqref{equivexc} we can state that  there exists a $\bar c>0$ such that 
\begin{equation}\label{jensencont}
 \Ebbzero\left[ A_{t}(I)\,\middle|\, \tau=t, I_{t}=0\right]\leq  \Ebbzero( |A_{t}(I)|) \,  K_\gb(t)^{-1}\leq \bar c \, t^{7/2}, \quad t\in \N.
\end{equation}
Using (\ref{eq:lbENq:1}--\ref{jensencont}) we obtain that there exists a $c_1>0$ such that,
\begin{equation}\label{eq:lbENq:2}
E^{\,\circ}_{N,q} \;\geq\; \sum_{r=1}^N \sumtwo{1\leq t_1,\ldots,t_r\leq N,}{t_1+\cdots+t_r=N} \prod_{i=1}^r K_\gb(t_i)\,e^{\gd}\, e^{-c_1\frac{t_i^{7/2}}{N} }\;.
\end{equation}
Since $\delta>\tilde \gd_{c}(\gb)$, equation \eqref{eq:prop:wetpoint} guarantees that
\begin{equation}\label{def:Qinfty}
Q_\infty(t)\;:=\; K_{\gb}(t)e^\gd e^{-h_\gb(\gd) t} \qquad t\in\N\;,
\end{equation}
is a probability distribution on $\N$. We also define for all $N\in\N$,
\begin{equation}\label{def:QN}
Q_N(t)\;:=\; K_{\gb}(t)e^\gd e^{-h_Nt} e^{-\frac {c_1}N t^{7/2}}\qquad t\in\N\;,
\end{equation}
where $h_N$ is the unique solution of
\begin{equation}\label{def:hN}
\sum_{t\geq1} K_{\gb}(t)e^\gd e^{-h_Nt} e^{-\frac {c_1}N t^{7/2}} \; = \; 1 \;.
\end{equation}
Notice that $h_N\leq h_\gb(\gd)$ and $h_N\to h_\gb(\gd)$ as $N\to\infty$. Plugging these notations into \eqref{eq:lbENq:2}, we have
\begin{equation}\label{eq:lbENq:3}
E^{\,\circ}_{N,q} \; \geq \; e^{h_NN} Q_N(N\in\tau) \;,
\end{equation}
where $\tau$ is a renewal process in $\N$ whose inter-arrival distribution is given by $Q_N$.

\begin{lem}\label{lem:tdhhN:conv}
There is $C_\cntc>0$ such that for all $N\in\N$,
\begin{equation}
0\;\leq\; h_\gb(\gd) -h_N\;\leq\; \frac{C_{\arabic{cst}}}N\;.
\end{equation}
\end{lem}
\begin{proof}
We already stated that $h_N\leq h_\gb(\gd)$ for all $N\in\N$. Using that $Q_\infty$ and $Q_N$ are probability distributions on $\N$, 
we write
\begin{equation*}
\sum_{t\geq1} K_{\gb}(t)e^\gd e^{-h_\gb(\gd) t} \big(e^{(h_\gb(\gd)-h_N)t}-1\big) \; = \; \sum_{t\geq1} K_{\gb}(t)e^\gd e^{-h_N t} \big(1-e^{-\frac{c_1}N t^{7/2}}\big) \;\geq\; 0 \;.
\end{equation*}
We bound the left hand side from below using $e^{(h_\gb(\gd)-h_N)t}-1\geq (h_\gb(\gd)-h_N)t\geq h_\gb(\gd)-h_N$ for all $t\geq1$ and \eqref{def:Qinfty}, so
\begin{equation*}
h_\gb(\gd)-h_N\;\leq\; \sum_{t\geq1} K_{\gb}(t)e^\gd e^{-h_N t} \big(1-e^{-\frac{c_1}N t^{^{7/2}}}\big)\;.
\end{equation*}
Since $\lim_{N\to \infty} h_N= h_\gb(\gd)$, we have some $N_0\in\N$ such that $h_N\geq h_\gb(\gd)/2$ for all $N\geq N_0$. Moreover $1-e^{-\frac cN t^{7/2}}\leq \frac{c_1}N t^{7/2}$, so for all $N\geq N_0$,
\begin{equation*}
h_\gb(\gd)-h_N\;\leq\; \frac{c_1}N \sum_{t\geq1} K_{\gb}(t)e^\gd e^{-(h_\gb(\gd)/2)\, t} \,t^{7/2} \;,
\end{equation*}
and that sum is finite, which concludes the proof.
\end{proof}
Applying this lemma to \eqref{eq:lbENq:3}, we obtain
\begin{equation}\label{eq:lbENq:4}
E^{\,\circ}_{N,q} \; \geq \; C_\cntc e^{h_\gb(\gd) N} Q_N(N\in\tau) 
\end{equation}
and therefore the proof will be complete once we show that $\liminf_{N} Q_N(N\in\tau))>0$. Applying the main theorem from \cite{Ney81}, there is $M$ a random variable with distribution Geom$(b)$, $b\in(0,1)$, and a sequence $Z_i=C'+\gz_i$, $i\geq1$ where $C'>0$ and $(\gz_i)_{i\in\N}$ are i.i.d. variables with distribution Exp$(a)$, $a>0$, such that $M$ and $(Z_i)_{i\in\N}$ are independent, and
\begin{equation}\label{eq:Ney81}
\Big| Q_N(n\in\tau)-\frac1{\mu_N}\Big| \;\leq\; C_\cntc\, \bP\Big(\sum_{i=1}^M Z_i > n\Big)\;,
\end{equation}
for all $n\in\N$ and $N\in\N$, where $\mu_N:=\bE_{Q_N}[\tau_1]\in(0,\infty)$.
The key feature here is that $C', C_{\arabic{cst}}>0$, $a>0$ and $b\in(0,1)$ can be taken uniformly in $N\in\N$. 
We do not write the details here as it suffices to check the proof in \cite{Ney81} ---more precisely one has to use that $Q_N(t)\leq C_{\cntc} Q_\infty(t)$ uniformly in $N,t\in\N$ (this follows from Lemma~\ref{lem:tdhhN:conv})
 in \cite[(2.9)]{Ney81}, and $Q_N(t)\geq C_{\cntc} Q_\infty(t)$ uniformly in $N\in\N$ and finitely many $t\in\N$ in \cite[(2.13)]{Ney81}.

In particular when $n=N$, the right hand side in \eqref{eq:Ney81} decays to 0 as $N\to\infty$, so $Q_N(N\in\tau)-\frac1{\mu_N}$ converges to $0$ as $N\to\infty$. Finally, we note that $\lim_{N\to \infty} \mu_N=\mu_\infty:=\bE_{Q_\infty}[\tau_1]\in(0,\infty)$ (by dominated convergence theorem). Thus,  $Q_N(N\in\tau)\geq \frac1{2\mu_\infty}$ for $N$ sufficiently large, and this concludes the proof of Proposition~\ref{prop:ENq:AC}.
\end{proof}

\subsection{Step 4 : area-penalized wetting model for $\gd\leq \tilde \gd_c(\gb)$}\label{sec:step4}

Estimates of $E^{\,\circ}_{N,q}$ in the phase $\cD\cC$ are more involved than in $\cA\cC$. Actually we only manage to find a sharp asymptotic of $E^{\,\circ}_{N,q}$ when $\gd=0$. To lighten upcoming notations, let us define for all $\gga>0$,
\begin{equation}
E_N(\gga) \;:=\; \Ebbzero \left[e^{-\,\gga \frac{A_{N}(I)}{N}} \pt\ind_{\{I\in B^{0,+}_{N}\}}\right]\,.
\end{equation}
Recall that $\sigma_\beta^2$ is the variance of $I_1$ under $\Pbb$.
 
For any $T>0$ and $y\geq0$, let $(\cM_{s,T}^y)_{s\in[0,T]}$ denote the Brownian meander on $[0,T]$ starting from $y$ ---henceforth we will denote its law with $\bbP$ to distinguish it from $\Pbb$. When $y>0$, it has the same law as the Brownian motion starting from $y$ and conditioned to remain positive on $[0,T]$. We will omit the superscript $y$ when $y=0$, and the subscript $T$ when $T=1$. 
Define for all $\gga>0$,
\begin{equation}
J(\gga)\;:=\; \lim_{T\to\infty} \frac1T \log \bbE\left[e^{-\gga \int_{0}^{T} \cM_{s,T} \pt\dd s }\right]\;=\; - 2^{-1/3} \pt |\mathrm{a}_1|\pt \gga^{2/3}\;,
\end{equation}
where $\mathrm{a}_1$ denotes the smallest zero (in absolute value) of the Airy function. More precisely one notices $\bbE\big[e^{-\gga \int_{0}^{T} \cM_{s,T}\pt \dd s }\big]=\bbE\big[e^{-\gga\pt T^{3/2} \int_{0}^{1} \cM_s \dd s }\big]$, and the latter expectation has been computed analytically in \cite{Takacs95} (see also \cite[(209)]{Janson07}), which leads to the second identity. We claim the following.

\begin{prop}\label{convinvblock}
One has
\begin{equation}
\frac1{N^{1/3}} \log E_N(\gga) \;\underset{N\to\infty}{\longrightarrow}\; J(\sigma_\beta\pt \gga)\;,
\end{equation}
locally uniformly in $\gga\in(0,\infty)$.
\end{prop}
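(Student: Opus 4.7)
The key observation is that the area penalization becomes of order $N^{1/3}$ under a diffusive rescaling with space scale $\sigma_\beta N^{1/3}$ and time scale $N^{2/3}$. Setting $\tilde I_s := I_{\lfloor sN^{2/3}\rfloor}/(\sigma_\beta N^{1/3})$ for $s\in[0,T_N]$ with $T_N:=N^{1/3}$, a Riemann-sum computation gives
\[ \frac{A_N(I)}{N} \;\approx\; \sigma_\beta\int_0^{T_N}\tilde I_s\dd s, \]
while under $\Pbbzero$ the rescaled walk has unit-variance increments and, when conditioned on $I\in B_N^{0,+}$, becomes asymptotically a non-negative bridge on $[0,T_N]$. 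Since $T_N\to\infty$, the definition of $J$ predicts
\[ \tfrac{1}{N^{1/3}}\log E_N(\gga) \;=\; \tfrac{1}{T_N}\log\bbE\bigl[e^{-\gga\sigma_\beta \int_0^{T_N}\cM_{s,T_N}\dd s}\bigr](1+o(1)) \;\to\; J(\sigma_\beta\gga).\]
Equivalently, one can work on the fixed interval $[0,1]$: the rescaled walk $I_{\lfloor Nt\rfloor}/(\sigma_\beta\sqrt N)$ converges under $\Pbbzero(\,\cdot\mid I\in B_N^{0,+})$ to a Brownian excursion $e$ (standard invariance principle for non-negative bridges), $A_N(I)/N\approx\sigma_\beta\sqrt N\int_0^1 e_s\dd s$, and the sharp Laplace asymptotic $\bbE[e^{-\mu\int_0^1 e_s\dd s}]\sim c\mu^{1/6}\exp(-2^{-1/3}|\mathrm{a}_1|\mu^{2/3})$ as $\mu\to\infty$ (following from the Airy spectrum of $-\tfrac12\partial_x^2+x$ on $[0,\infty)$ with Dirichlet boundary, see e.g. \cite{Takacs95}) applied with $\mu=\gga\sigma_\beta\sqrt N$ produces the exponent $J(\sigma_\beta\gga)N^{1/3}$.

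\textbf{Lower bound.} To make this rigorous, I would first truncate the expectation to $\{A_N(I)/N\leq c\sqrt N\}$ for $c$ large; the complementary event contributes at most $e^{-\Theta(\sqrt N)}=e^{o(N^{1/3})}$ via a Chernoff bound on $\sum_i|I_i-I_{i-1}|$ (the step distribution $\Pb_\gb$ has finite exponential moments). On the truncated set the invariance principle for non-negative bridges applies: weak convergence of $\tilde I$ to a Brownian excursion $e$ together with uniform integrability of the exponential functional (ensured by the truncation) yields
\[ \liminf_{N\to\infty} \tfrac{1}{N^{1/3}}\log E_N(\gga) \;\geq\; \liminf_{N\to\infty} \tfrac{1}{N^{1/3}}\log\bbE\bigl[e^{-\gga\sigma_\beta\sqrt N\int_0^1 e_s\dd s}\bigr] \;=\; J(\sigma_\beta\gga). \]

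\textbf{Upper bound and local uniformity.} The upper bound is the main technical obstacle: weak convergence is too crude because the exponent is of order $\sqrt N$, so one needs a sharp Laplace-type estimate uniform along the rescaled walk. A natural approach is a Feynman-Kac / transfer operator method: write $E_N(\gga)=(PM_{\gga/N})^N(0,0)$ where $P$ is the substochastic step kernel on $\N_0$ killed at $-1$ and $M_{\gga/N}(x):=e^{-\gga x/N}$. Under the diffusive rescaling, this operator converges (in a norm resolvent sense) to $\exp(-(-\tfrac12\partial_x^2+\gga\sigma_\beta x))$ on $L^2(0,\infty)$ with Dirichlet boundary at $0$, whose principal eigenvalue is exactly $2^{-1/3}|\mathrm{a}_1|(\sigma_\beta\gga)^{2/3}$; a semigroup comparison between the discrete and continuous operators (controlling boundary contributions via the Airy ground state $\mathrm{Ai}$) then yields the matching upper bound. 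Finally, local uniformity in $\gga\in(0,\infty)$ comes for free: $\gga\mapsto\log E_N(\gga)$ is convex and non-increasing as the logarithm of a Laplace transform of a non-negative random variable, and $J(\sigma_\beta\cdot)$ is continuous on $(0,\infty)$, so pointwise convergence automatically upgrades to locally uniform convergence.
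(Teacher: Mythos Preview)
Your heuristic is correct, but the actual argument has a genuine gap that affects both bounds, not just the upper one. You write that weak convergence of the rescaled bridge $\tilde I$ to a Brownian excursion $e$, together with uniform integrability, yields
\[
\liminf_{N\to\infty}\tfrac{1}{N^{1/3}}\log E_N(\gga)\;\geq\;\liminf_{N\to\infty}\tfrac{1}{N^{1/3}}\log\bbE\bigl[e^{-\gga\sigma_\beta\sqrt N\int_0^1 e_s\,\dd s}\bigr].
\]
This does \emph{not} follow. Weak convergence gives $\bbE[F(\tilde I^{(N)})]\to\bbE[F(e)]$ for a \emph{fixed} bounded continuous $F$, whereas here $F_N(x)=\exp(-\gga\sigma_\beta\sqrt N\int_0^1 x_s\,\dd s)$ depends on $N$ through an exponent that diverges. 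Uniform integrability is irrelevant (the functional is bounded by $1$ anyway); what you would need is a quantitative coupling of $\tilde I^{(N)}$ and $e$ with error $o(N^{-1/6})$ in the area, and no standard invariance principle delivers this. You correctly flag this issue for the upper bound, but it is exactly the same obstruction for the lower bound. Your transfer-operator sketch for the upper bound could in principle be made to work, but ``norm resolvent convergence'' of a discrete killed walk with linear potential to the Airy operator, together with control of boundary terms, is a substantial project that you have not carried out.

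The paper circumvents the diverging-exponent problem by a block decomposition: it cuts $[0,N]$ into roughly $N^{1/3}/T$ blocks of length $N_T=\lfloor TN^{2/3}\rfloor$. On each block the area penalty is $O(1)$, so the ordinary invariance principle (to a Brownian \emph{meander} of length $T$, not an excursion) applies to each block separately; the Markov property then turns the product over blocks into the exponential rate. For the upper bound this is short: a monotonicity lemma (the block functional is decreasing in the starting height) reduces to starting each block at $0$, giving $\tfrac{1}{T}\log\bbE[e^{-\gga\int_0^T\cM_{s,T}\dd s}]$, and one lets $T\to\infty$. The lower bound is where the real work lies: one must glue blocks with controlled endpoints, compare quantities with nearby starting heights, use an FKG inequality to decouple an endpoint constraint from the area penalty, and finally show that the meander started at a small height $\Delta$ has the same exponential rate as the meander started at $0$ (via a small-ball estimate). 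Your local-uniformity argument is fine; the paper uses only monotonicity plus continuity of the limit, but your convexity observation works equally well.
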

\begin{proof}
First, we claim that it suffices to prove the pointwise convergence. Indeed, one notices that $\gga\mapsto J(\sigma_\beta\, \gga)$ is continuous, and $\gga\mapsto \frac1{N^{1/3}} \log E_N(\gga)$ is non-increasing for any $N\in\N$. It is well-known that those assumptions put together with the pointwise convergence imply the locally uniform convergence ---see e.g. \cite[Prop.~2.1]{Resnick07} for a proof.

Notice that $\frac1{\gs_\gb}I$ is a random walk on $\frac1{\gs_\gb}\bbZ$ with variance 1, and upcoming computations still hold when replacing $\gga$ with $\gs_\gb\pt\gga$. Hence we can assume without loss generality that $\gs_\gb=1$.\smallskip

\noindent {\it Upper bound.} Let $N\in\N$ and $T>0$, and let us denote $N_T:=\lfloor TN^{2/3}\rfloor$. Set also $a_N:=\lfloor N/N_T\rfloor$. We decompose a trajectory contributing to $E_N(\gga)$ into $a_N$ blocks of length $N_T$ and a remaining block of length at most $N_T$. We apply Markov property at times $jN_T$ for $j\in\{1,\dots,a_N\}$ and we bound the contribution of the very last block by $1$ to obtain 
\begin{equation}\label{eq:prfENqDC:UB}
E_N(\gga) \;\leq\; \left(\sup_{x\in\N} \Ebbx\left[e^{-\frac\gga N A_{N_T}(I)}\ind_{\big\{I\in B_{N_T}^+\big\}}\right]\right)^{a_N},
\end{equation}
where for all $n\in\N$, $B^{+}_n:= \big\{(X_k)_{k=1}^n \in\bbZ^{n}\,;\, X_k\geq 0 \;\forall\,1\leq k\leq n\big\}$.
\begin{lem}\label{lem:Dxm}
For $\ga>0$ and $m\in\N_0$,
\begin{equation*}
x\;\mapsto\; \Ebbx\left[e^{-\ga A_{m}(I)}\,\middle|\,I\in B_{m}^+\right]\;,
\end{equation*}
is non-increasing on $\N_0$. This also holds when conditioning over $B_{m}^{0,+}$ instead of $B_{m}^+$.
\end{lem}
We postpone the proof of this lemma to Section~\ref{proof:Dxm}. 
Choosing $\ga=\gga/N$ and applying $\frac1{N^{1/3}}\log$ to \eqref{eq:prfENqDC:UB} (and because $\Pbbx(B_{m}^+)\leq 1$), we obtain
\begin{align}
\nonumber \frac1{N^{1/3}} \log E_N(\gga) \;&\leq\; \frac{a_N}{N^{1/3}}\log \left(\sup_{x\in\N} \Ebbx\left[e^{-\frac\gga N A_{N_T}(I)}\,\middle|\,I\in B_{N_T}^+ \right]\right)\\\label{eq:prfENqDC:UB:0.25}
&\leq\; \Big(1+\frac1{N_T}\Big)\frac1{T} \log \Ebbzero\left[e^{-\frac\gga N A_{N_T}(I)}\,\middle|\,I\in B_{N_T}^+ \right],
\end{align}
for all $N,T$. Moreover the process $(\frac{1}{N^{1/3}}I_{\lfloor sN^{2/3}\rfloor})_{s\in [0,T]}$ conditioned to remain non-negative converges weakly as $N\to\infty$ to the Brownian meander $(\cM_{s,T})_{s\in[0,T]}$ in the set of cadlag functions on $[0,T]$ endowed with the uniform convergence topology (see \cite[Theorem 3.2]{Bol76}), and the swiped area is a continuous function of the trajectory. Thereby,
\begin{equation}\label{eq:prfENqDC:UB:0.5} \lim_{N\to\infty} \Ebbzero\left[e^{-\frac\gga N A_{N_T}(I)}\,\middle|\,I\in B_{N_T}^+ \right] \;=\; \bbE\!\left[\exp\left( -\gga\!\int_0^T \! \cM_{s,T} \pt\dd s \right)\right],
\end{equation}
for all $\gga>0$. 
Recollecting \eqref{eq:prfENqDC:UB:0.25}, we have for any fixed $T>0$,
\begin{equation*}
\limsup_{N\to\infty} \frac1{N^{1/3}} \log E_N(\gga) \;\leq\; \frac1T \log \left(\bbE\!\left[\exp\left( -\gga\!\int_0^T \! \cM_{s,T} \pt\dd s \right)\right]\right)\,.
\end{equation*}
Choosing  $T>0$ arbitrarily large, we conclude \[\limsup_{N\to\infty} \frac1{N^{1/3}} \log E_N(\gga) \leq J(\gga)\;.\]
\smallskip

\noindent {\it Lower bound.} 
It remains to prove that 
\begin{equation}\label{liminfE}
\liminf_{N\to\infty}  \frac1{N^{1/3}} \log E_N(\gga) \geq J(\gga).
\end{equation}
Let us  settle some notations, i.e., for $\kappa_1<\kappa_2\in \R$ and $N\in \N$ we define $\cO_{\kappa_1,\kappa_2,N}:= \big[\kappa_1\, N^{\frac{1}{3}},\, \kappa_2 N^{\frac{1}{3}}\big]$. 
For $\Delta>0$, $N,n\in \N$ and $x,y\in \N_0$  we set 
\begin{align}\label{eq:defGxy}
G^{\, x,y}_{N,n}&\;:=\;\Ebbx\left[e^{-\frac\gga N A_{n}(I)}\, \ind_{\big\{I\in B_{n}^{y,+}\big\}}
\right],\\
\nonumber\text{and} \qquad \tilde G^{\, x}_{N,\Delta,n}&\;:=\;\sum_{y\in \cO_{ \Delta/2,\Delta,N}} G^{\, x,y}_{N,n}\;.
\end{align}
Let also $N\in \N$ and $T>0$, and define $N_T:=\lfloor TN^{2/3}\rfloor$ and $a_N:=\lfloor(N-2N_T)/(N_T+2)\rfloor$ (where $a_N\geq0$ as soon as $N\geq 8T^3$). We decompose a trajectory in $E_N(\gga)$ into $a_N+2$ blocks: the first block has length $N_T$, the $a_N$ following blocks have length $N_T+2$, and the final block has length $\gr N_T$ for some $\gr\in[1,2+\frac1{N_T}]$. We restrict $E_N(\gga)$ to those trajectories located inside  $\cO_{ \Delta/2,\Delta,N}$ at times $N_T$ and $(j+1) N_T+2j$ for every $j\in \{1,\dots,a_N\}$. Then, applying   Markov property at those times
we obtain 
\begin{equation}\label{corgrai}
E_N(\gga)\geq \tilde G^{\,0}_{N,\Delta,N_T}\  \big[\hat G_{N,\, \Delta,\, N_T+2}\Big]^{a_N}\ \inf_{x\in \cO_{\Delta/2,\Delta,N}}
G^{\, x,0}_{N,\pt\gr N_T}
\end{equation}
 with 
 \begin{equation}\label{defCN}
 \hat G_{N,\Delta,n}:=\inf_{x\in \cO_{\Delta/2,\Delta,N}} \tilde G^{x}_{N,\Delta,n}.
 \end{equation}
 
 We will prove \eqref{liminfE} subject to claims \ref{convGends}, \ref{boundCbU}, \ref{compun} and \ref{convmeand}
 below. Before stating those claims we need some additional notations. For $\Delta>0$, $n\in \N$ and $N\in \N$ we set
 \begin{align}\label{controlCN}
U^{\,j}_{N,\Delta,n}&:=     \Ebbzero\left[e^{-\frac\gga N A_{n}(I)}\, \ind_{\big\{I\in \tilde B^+_{n,\Delta N^{1/3}}\big\}}\, \, \ind_{\big\{I_{n}\in \tilde \cO^{\, j}_{\Delta,N}\big\}}\right] , \quad j\in \{1,2,3\},
\end{align}
with
\begin{equation*}\label{deftio}
 \tilde \cO^1_{\Delta,N}:=\Big(\tfrac{-\Delta N^{1/3}}{2},\tfrac{-\Delta N^{1/3}}{4}\Big) \quad \text{and} \quad 
 \tilde \cO^2_{\Delta,N}:=\Big[\tfrac{-\Delta N^{1/3}}{4}, 0\Big) \quad \text{and} \quad 
   \tilde \cO^3_{\Delta,N}:=\Big[0,\tfrac{\Delta N^{1/3}}{4}\Big].
 \end{equation*}
and where we introduce for $\kappa\geq 0$ and $n\in \N$,
$$\tilde B^+_{n,\kappa}:=\big\{(X_i)_{i=1}^n \in \bbZ^{n}\,;\, X_i\geq -\tfrac{\kappa}{2}, \;\forall\,1\leq i< n\;,\, X_n>-\tfrac{\kappa}{2}\big\}.$$ 
Claim~\ref{convGends} handles the first and last factors in \eqref{corgrai}. Claim~\ref{boundCbU} combined with Claim~\ref{compun} will be useful  to get rid of the infimum in the definition of 
$ \hat G_{N,\Delta,n}$ in \eqref{defCN}
and replace it by $U^{\,1}_{N,\Delta,n}+U^{\,2}_{N,\Delta,n}$. On this latter quantity one may apply an FKG inequality and 
get rid of the constraint $\{I_n\in \cO_{ \Delta/2,\Delta,N}\}$ in $ \hat G_{N,\Delta,n}$. Claim \ref{convmeand} will be used at the end of the proof to retrieve $J(\gga)$.  

 \begin{claim}\label{convGends} For $\Delta>0$ and $T>0$, one has
\begin{equation}\label{eq:convGends}
\lim_{N\to \infty} \frac{1}{N^{1/3}} \log  \tilde G^{\,0}_{N,\Delta,\gr N_T} \;=\; \lim_{N\to \infty} \frac{1}{N^{1/3}} \log \Big(\inf_{x\in \cO_{\Delta/2,\Delta,N}}
G^{\, x,0}_{N,\gr N_T}\Big)\;=\;0\,,
\end{equation}
uniformly in $\gr\in[1,3]$.
 \end{claim}

 \begin{claim}\label{boundCbU}
For $\Delta>0$ and $N,n\in \N$,
 \begin{equation}\label{bbetzero} 
 \hat G_{N,\Delta,n}\geq \exp\big(-\tfrac{\gamma \Delta n}{N^{2/3}}\big)\,  \min \big\{U^2_{N,\Delta,n}, U^3_{N,\Delta,n}\big\}\,.
 \end{equation}


 \end{claim}
 
 \begin{claim}\label{compun}
 There exists a $C>0$ such that  for $\Delta>0$ and $N,n\in \N$,
 \begin{align}\label{comppre}
 U^{\,j}_{N,\Delta,n}&\leq C \,  \exp\big(\tfrac{\gamma \Delta n}{N^{2/3}} \big) \,  U^{\,j+1}_{N,\Delta,n+1} \;, \quad \text{for}\ j\in \{1,2\}\,.
 \end{align}
 \end{claim}

 \begin{claim}\label{convmeand} One has
 \begin{equation}\label{limverm}
 \limsup_{\Delta\to 0^+}\, \limsup_{T\to \infty} \frac{1}{T} \log  \bbE\left[e^{-\gga \int_{0}^{T} \cM_{s,T}^\Delta \  \dd s }\right]\geq 
 J(\gamma)\,.
 \end{equation}
 \end{claim}
 
We resume the proof of the lower bound. We observe that by constraining the last increment of the random walk $I$ to be null in $U^{\,2}_{N,\Delta,n+1}$ we get the inequality
\begin{align}\label{UvUp}
U^{\,2}_{N,\Delta,n}&\leq c_\beta \,  U^{\,2}_{N,\Delta, n+1}.
\end{align}
  Then, using Claim \ref{compun}  for $n=N_T$ and $j=1$ and using \eqref{UvUp} for $n=TN^{2/3}$,  we obtain that there exists a $C_1>0$ such that 
  for $T,\Delta>0$ and $N\in \N$
\begin{align}\label{ineginter}
 U^{\,1}_{N,\Delta,N_T}+ U^{\,2}_{N,\Delta,N_T}&\leq C_1\,  e^{\gamma \Delta T} \,  U^{\, 2}_{N,\Delta,N_T+1}\,.
\end{align}
Then, it suffices to apply, on the one hand, Claim \ref{compun}  for $j=2$ and $n=N_T+1$ to the r.h.s. in  \eqref{ineginter} and, on the other hand, \eqref{UvUp} for  $n=N_T+1$ to the r.h.s. in  \eqref{ineginter} 
to assert that there exists a $C_2>0$ such that for $T,\Delta>0$ and $N\in \N$,
\begin{equation}\label{vboundtilde}
V_{N_T}\;:=\;  U^{\,1}_{N,\Delta,N_T}+ U^{\,2}_{N,\Delta,N_T}\;\leq\; C_2\,  e^{2\gamma \Delta T}\, \min\{U^2_{N,\Delta,N_T+2}, U^3_{N,\Delta,N_T+2}\}\,.
\end{equation}
We note that $V_{N_T}$ may be rewritten under the form
\begin{align}
V_{N_T}\;&=\; \Ebbzero\left[e^{-\frac\gga N A_{N_T}(I)}\, \ind_{\big\{I\in \tilde B^+_{N_T,\, \Delta N^{1/3}}\big\}}\, \, \ind_{\big\{I_{N_T}\leq 0 \big\}}\right] \\
\nonumber &=\; \Pbbzero\left(I\in \tilde B^+_{N_T,\Delta N^{1/3}}\right)\  \Ebbzero\left[e^{-\frac\gga N A_{N_T}(I)}\, \ind_{\big\{I_{N_T}\leq 0 \big\}}\, \Big |\, I\in \tilde B^+_{N_T,\, \Delta N^{1/3}}\right] .
\end{align}
By using the FKG inequality described in Proposition \ref{prop:FKG} with $A:= \tilde B^+_{N_T,\, \Delta N^{1/3}}$
we obtain 
\begin{align} 
 \Ebbzero\Big[& e^{-\frac\gga N A_{N_T}(I)} \, \ind_{\big\{I_{N_T}\leq 0 \big\}} \,\Big|\, I\in \tilde B^+_{N_T,\, \Delta N^{1/3}}\Big] \geq \\
\nonumber &\Ebbzero\left[e^{-\frac\gga N A_{N_T}(I)} \,\Big|\, I\in \tilde B^+_{N_T,\, \Delta N^{1/3}}\right]\    
\Pbbzero\left[I_{N_T}\leq 0  \,\Big|\, I\in \tilde B^+_{N_T,\, \Delta N^{1/3}}\right].
\end{align}
By  Donsker's invariance principle the rescaled process $(\frac{1}{N^{1/3}}I_{\lfloor sN^{2/3}\rfloor})_{s\in [0,T]}$ converges in distribution 
towards  $(B_s)_{s\in[0,T]}$ a standard Brownian motion, in the set of cadlag functions on $[0,T]$ endowed with the uniform convergence topology. Therefore, we set $m_T:=\min\{B_s, s\in [0,T]\}$ and we obtain 
\begin{equation}\label{lininfv}
\liminf_{N\to \infty} V_{N_T}\geq \ \bbP\!\left[B_T<0, \ m_T> -\tfrac{\Delta}{2}\right]  \bbE\!\left[e^{ -\gga\!\int_0^T \! B_s ds} \, \big|\, m_T> -\tfrac{\Delta}{2}\right],
\end{equation}
Finally, using  \eqref{eq:convGends}, \eqref{bbetzero}, \eqref{vboundtilde} and \eqref{lininfv} we can deduce from \eqref{corgrai} that
\begin{align}\label{liminfbs}
\nonumber \liminf_{N\to \infty}\frac{1}{N^{1/3}} \log  E_N(\gga)\geq -\frac{\log C_3}{T} &-3\gga \Delta +\frac{1}{T}\log  \bbP\!\left[B_T<0, \ m_T> -\tfrac{\Delta}{2}\right]\\
& + \frac{1}{T}\log   \bbE\!\left[e^{ -\gga\!\int_0^T \! B_s ds} \, \big|\, m_T> -\tfrac{\Delta}{2}\right].
\end{align}
Recalling that $\Delta\leq 1$ and using \cite[Prop. 8.1]{KarShr} we can state that there exists a $c>0$ such that for $T$ large enough
$  \bbP\!\left[B_T<0, \ m_T> -\tfrac{\Delta}{2}\right]\geq c\,\Delta^2/T$ which, after taking $\limsup_{T\to \infty}$ in the r.h.s. 
in \eqref{liminfbs} allows us to write that for every $\Delta>0$
\begin{align}\label{liminfbs2}
\liminf_{N\to \infty}\frac{1}{N^{1/3}} \log  E_N(\gga)\geq&-3\gga \Delta 
+\limsup_{T\to \infty}  \frac{1}{T}\log   \bbE_{\Delta/2}\!\left[e^{ -\gga\!\int_0^T \! M_s ds} \right],
\end{align}
and we conclude by using Claim \ref{convmeand} and by taking $\limsup_{\Delta\to 0^+}$ in the r.h.s. in \eqref{liminfbs2}.
This proves \eqref{liminfE} and completes the proof of \eqref{convinvblock}.

%

\end{proof}

\begin{proof}[Proof of Claim \ref{convGends}]
Let $\Delta,T>0$. First we notice that we only need to prove a lower bound on $G^{\,0,x}_{N,n}$ uniform in $x\in \cO_{\Delta/2,\Delta,N}$ as $N\to\infty$. Indeed, a time-reversal argument yields that $G^{\,x,0}_{N,n}=G^{\,0,x}_{N,n}$ for all $x\in\N_0$, $n\in\N$ (recall that $\Pbb$ is symmetric, see \eqref{def:Pgb}), and
\begin{equation}\label{conGends:eq1}
\inf_{x\in \cO_{\Delta/2,\Delta,N}} G^{\,0,x}_{N,n} \;\leq\; \tilde G^{\,0}_{N,\Delta,n} \;\leq\; 1\;.
\end{equation}
Moreover for any $x\in \cO_{\Delta/2,\Delta,N}$ and $\gr\in[1,3]$ such that $\gr N_T\in\N$, we write 
\begin{equation}\label{conGends:eq1.5}
G^{\,x,0}_{N,\gr N_T} = \Ebbx\left[e^{-\frac\gga N A_{\gr N_T}(I)}\,\middle|\,I\in B_{\gr N_T}^{0,+}\right] \Pbbx\big(B_{\gr N_T}^{0,+}\big)\;.
\end{equation}
Lemma~\ref{lem:Dxm} claims that the first factor is non-increasing in $x$. Moreover, the second factor is polynomial in $N$ uniformly in $x\in \cO_{\Delta/2,\Delta,N}$ and $\gr\in[1,3]\cap \frac1{N_T}\N$, see \eqref{eq:asymp:PosBB} below. Thereby, we deduce from \eqref{conGends:eq1.5} that
\begin{equation}\label{conGends:eq2}
G^{\,x,0}_{N,\gr N_T} \;\geq\; \frac{C_\cntc}{N^{2/3}} \pt \bE_{\gb,\Delta N^{1/3}}\left[e^{-\frac\gga N A_{\gr N_T}(I)}\,\middle|\,I\in B_{\gr N_T}^{0,+}\right],
\end{equation}
for $N\in\N$, uniformly in $x\in \cO_{\Delta/2,\Delta,N}$ and $\gr\in[1,3]\cap \frac1{N_T}\N$ ---notice that we wrote $\Delta N^{1/3}$ instead of $\lceil\Delta N^{1/3}\rceil$, and we will omit all ceil functions henceforth to lighten notations. Recalling that $\frac12TN^{2/3}\leq \gr N_T\leq 3TN^{2/3}$ and $\gr\in[1,3]$, \eqref{conGends:eq2} implies
\begin{equation}\label{conGends:eq2bis}
G^{\,x,0}_{N,\gr N_T} \;\geq\; \frac{C_{\arabic{cst}}}{N^{2/3}} \bE_{\gb,\pt\Delta\sqrt{\frac2T}\sqrt{\gr N_T}}\left[e^{- \gga(\frac{3T}{\gr N_T})^{3/2} A_{\gr N_T}(I)}\,\middle|\,I\in B_{\gr N_T}^{0,+}\right].
\end{equation}
Furthermore, it is proven in \cite[Thm.~2.4]{CC13} that a properly rescaled random walk of length $n$ starting from $c \sqrt{n}$, $c\in\R_+$, conditioned to remain non-negative and end in $0$, converges in distribution as $n\to\infty$ to a Brownian bridge starting from $c$, conditioned to remain non-negative and to end in $0$. Moreover $\gr N_T\geq \tfrac12 TN^{2/3}$ for all $\gr\in[1,3]$, so the expectation in the r.h.s of \eqref{conGends:eq2bis} converges as $N\to\infty$ to some positive constant, uniformly in $\gr\in[1,3]$. Recollecting \eqref{conGends:eq1}, this concludes the proof of the claim.
\end{proof}

\begin{proof}[Proof of Claim \ref{boundCbU}]
Recall \eqref{defCN} and note that 
\begin{equation}\label{expraltiG}
\tilde G^{\, x}_{N,\Delta,n}=\Ebbx\left[e^{-\frac\gga N A_{n}(I)}\, \ind_{\big\{I\in \tilde B_{n,0}^{+}\big\}} \, \ind_{\big\{I_n\in \cO_{\Delta/2,\Delta,N}\big\}}
\right].
\end{equation}
Let us first consider the case  $x\in \cO_{\frac{\Delta}{2}, \frac{3\Delta}{4}, N}$.
We observe that if a trajectory $I:=(I_i)_{i=0}^{n}$ satisfies $ I_0=0$, $I\in \tilde B^+_{n,\Delta N^{1/3}}$ and  $I_n\in\cO_{0,\Delta/4,N}$
then $x+I:=(x+I_i)_{i=0}^n$ satisfies  $x+I\in \tilde B_{n,0}^{+}$ and 
$x+I_n\in  \cO_{\frac{\Delta}{2},\Delta,N}$.
As a consequence 
\begin{align}\label{expraltiG2}
\nonumber \tilde G^{\, x}_{N,\Delta,n}&\geq\Ebbzero\left[e^{-\frac\gga N A_{n}(x+I)}\, \ind_{\big\{I\in \tilde B^+_{n,\Delta N^{1/3}}\}}   \ind_{\{I_n\in\cO_{0,\Delta/4,N}\}}
\right]\\
&\geq  \exp\big(-\tfrac{\gamma \Delta n}{N^{2/3}}\big)\, U^3_{N,\Delta,n}\;.
\end{align}
The case  $x\in \cO_{\frac{3\Delta}{4},\, \Delta, N}$ is taken care of similarly.  The only difference is that we consider 
$I:=(I_i)_{i=0}^{n}$ satisfies $ I_0=0$, $I\in \tilde B^+_{n,\Delta N^{1/3}}$ and  $I_n\in\cO_{-\frac{\Delta}{4},0,N}$ such that 
$x+I\in \tilde B_{n,0}^{+}$ and $x+I_n\in  \cO_{\frac{\Delta}{2},\Delta,N}$. Then, 
\begin{align}\label{expraltiG3}
\nonumber \tilde G^{\, x}_{N,\Delta,n}&\geq\Ebbzero\left[e^{-\frac\gga N A_{n}(x+I)}\, \ind_{\big\{I\in \tilde B^+_{n,\Delta N^{1/3}}\}}   \ind_{\{I_n\in\cO_{-\frac{\Delta}{4},0,N}\}}
\right]\\
&\geq  \exp\big(-\tfrac{\gamma \Delta n}{N^{2/3}}\big)\, U^2_{N,\Delta,n}\;,
\end{align}
and this completes the proof of the claim.
%
\end{proof}

\begin{proof}[Proof of Claim \ref{compun}]
We will focuss on proving the Claim for $j=2$. The case $j=1$ can be taken care of similarly. 
We decompose  $U^{\,2}_{N,\Delta,n}$ depending on the time  $\tau$ at which the trajectory is above the $x$-axis for the last time before time $n$, that is $\tau:=\; \max\{i\leq n\colon\, I_i\geq 0\}$. This gives
\begin{equation}\label{Uwithtau}
U^{\,2}_{N,\Delta,n}=\sum_{k=0}^{n-1} U^{\,2,k}_{N,\Delta,n}:=
\sum_{k=0}^{n-1}     \Ebbzero\left[e^{-\frac\gga N A_{n}(I)}\, \ind_{\big\{I\in \tilde B^+_{n,\Delta N^{1/3}}\big\}}\, \ind_{\{\tau=k\}} \, \ind_{\big\{I_{n}\in \tilde \cO^{\, 2}_{\Delta,N}\big\}}\right].
\end{equation}
For $k\in \{0,\dots,n-1\}$ we partition the trajectories contributing $U^{\,2,k}_{N,\Delta,n}$ depending on the values $z$ and $y$ taken by 
$I_\tau$ and $I_{\tau+1}$, respectively. This gives 
\begin{align}\label{defaltU3}
\nonumber U^{\,2,k}_{N,\Delta,n}:= \sum_{z\geq 0}\sum_{y=1}^{\Delta N^{1/3}/2} &    \Ebbzero\left[e^{-\frac\gga N A_{k}(I)}\, \ind_{\big\{I\in \tilde B^+_{k,\Delta N^{1/3}}\big\}}\, \ind_{\{I_k=z\}}\right]\,  \Pb_{\gb}(I_1=-z-y)\\
&  \Eb_{\gb,-y}\left[e^{-\frac\gga N A_{n-k-1}(I)}\, \ind_{\big\{-\frac{\Delta N^{1/3}}{2} \leq I_i<0,\  i\leq n-k-1\big\}}\,  \ind_{\big\{I_{n-k-1}\in \tilde \cO^{\, 2}_{\Delta,N}\big\}}\right].
\end{align}
We observe that $ \Pb_{\gb}(I_1=-z-y)=c_\beta \,  \Pb_{\gb}(I_1=-z)\,  \Pb_{\gb}(I_1=y)$.
Since the increments of $I$ have a symmetric law, we can rewrite the last expectation of the r.h.s. in \eqref{defaltU3} as
\begin{align}\label{defaltU4}
\nonumber  \Eb_{\gb,y}&\left[e^{-\frac\gga N A_{n-k-1}(-I)}\, \ind_{\big\{-\frac{\Delta N^{1/3}}{2} \leq -I_i<0,\  i\leq n-k-1\big\}}\,  \ind_{\big\{-I_{n-k-1}\in \tilde \cO^{\, 2}_{\Delta,N}\big\}}\right]\\
&\leq \exp\left(\frac{\gamma\Delta n}{N^{2/3}}\right) \Eb_{\gb,y}\left[e^{-\frac\gga N A_{n-k-1}(I)}\, \ind_{\big\{0< I_i\leq \frac{\Delta N^{1/3}}{2},\  i\leq n-k-1\big\}}\,  \ind_{\big\{I_{n-k-1}\in \tilde \cO^{\, 3}_{\Delta,N}\setminus\{0\}\big\}}\right].
 \end{align}
where we have used that   any trajectory  
$I=(I_i)_{i=0}^{n-k-1}$ that contributes the expectation of the l.h.s. in \eqref{defaltU4} satisfies 
$A_{n-k-1}(I)=-A_{n-k-1}(-I)\leq \tfrac12 \Delta N^{1/3} (n-k-1).$
Thus, combining \eqref{defaltU3} with \eqref{defaltU4} we obtain
\begin{align}\label{defaltU5}
\nonumber  U^{\,2,k}_{N,\Delta,n}\leq c_\beta \, &\exp\left(\frac{\gamma\Delta n}{N^{2/3}}\right) \,  \sum_{z\geq 0}\sum_{y=1}^{\Delta N^{1/3}/2}     \Ebbzero\left[e^{-\frac\gga N A_{k}(I)}\, \ind_{\big\{I\in \tilde B^+_{k,\Delta N^{1/3}}\big\}}\, \ind_{\{I_k=z\}}\right]\, \Pb_{\gb}(I_1=-z)\\  
\nonumber & \quad \,   \Pb_{\gb}(I_1=y)\, 
 \Eb_{\gb,y}\left[e^{-\frac\gga N A_{n-k-1}(I)}\, \ind_{\big\{0< I_i\leq \frac{\Delta N^{1/3}}{2},\  i\leq n-k-1\big\}}\,  \ind_{\big\{I_{n-k-1}\in \tilde \cO^{\, 3}_{\Delta,N}\setminus\{0\}\big\}}\right]\\
 \leq c_\beta \, & \exp\left(\frac{\gamma\Delta n}{N^{2/3}}\right) 
  \Ebbzero\left[e^{-\frac\gga N A_{n+1}(I)}\, \ind_{\big\{I\in \tilde B^+_{n+1,\Delta N^{1/3}}\big\}}\, \ind_{\{\tilde \tau=k+1\}}
  \,   \ind_{\big\{I_{n+1}\in \tilde \cO^{\, 3}_{\Delta,N}\big\}}\right],
\end{align}
where $\tilde \tau:=\max\{0\leq j\leq n+1\colon I_j=0\}$ is the last time before time $n+1$ at which $I$ touches the $x$-axis.
Therefore,
\begin{align*}
\sum_{k=0}^{n-1} U^{\,2,k}_{N,\Delta,n}&\leq c_\beta \, \exp\left(\frac{\gamma\Delta n}{N^{2/3}}\right) 
 \Ebbzero\left[e^{-\frac\gga N A_{n+1}(I)}\, \ind_{\big\{I\in \tilde B^+_{n+1,\Delta N^{1/3}}\big\}}\,
  \,   \ind_{\big\{I_{n+1}\in \tilde \cO^{\, 3}_{\Delta,N}\big\}}\right]\\
& \leq   c_\beta \, \exp\left(\frac{\gamma\Delta n}{N^{2/3}}\right) \, U^{\,3}_{N,\Delta,n+1}\,,
\end{align*}
and this completes the proof of the claim.
 \end{proof}

\begin{proof}[Proof of Claim \ref{convmeand}.]
For conciseness we set, for $\Delta, T>0$ 
\begin{equation}\label{defRdelta}
R_\Delta(T):= \bbE\left[e^{-\gga \int_{0}^{T} \cM_{s,T}^\Delta\,  \dd s }\right].
\end{equation}
We assume by contradiction that there exists an $\gep>0$ and a $\Delta_0\in (0,1]$ such that 
for every $\Delta\in (0,\Delta_0]$
\begin{equation}\label{contrad}
\limsup_{T\to \infty} \frac{1}{T} \log R_\Delta(T)\leq J(\gamma)-\gep\,.
\end{equation}
For $\Delta, T>0$, we set $\tau_{\Delta}:=\inf\{s\geq 0\colon\, \cM_{s,T}=\Delta\}$ ($\tau_\Delta$ depends on $T$ as well but we omit it for conciseness). We state a small ball inequality that will be proven at the end of the present proof, i.e., there exists $c_1, c_2>0$ such that for $T>0$,
$\eta\in (0,1]$ and $\Delta>0$,
\begin{align}\label{taudelta}
\bbP\left(\tau_{\Delta}\geq \eta T \right)&=\bbP\Big[\max_{s\in [0,\eta T]} \cM_{s,T}\leq \Delta\Big]\\
\nonumber &=\bbP\Big[\max_{s\in [0,\eta]} \cM_{s} \leq \Delta/\sqrt{T} \Big] \leq \frac{c_1}{\sqrt{\eta}}e^{-\frac{c_2}{\Delta^2} \eta T},
\end{align}
where we have used a standard scaling property of Brownian meander to write the second equality in \eqref{taudelta}.


At this stage, we choose $\eta>0$ and $\Delta \in (0,\Delta_0]$ such that 
\begin{equation}\label{condeta}
(J(\gamma)-\tfrac{\gep}{4})\, (1-\eta)\leq J(\gamma)-\tfrac{\gep}{8} \quad \text{and} \quad -\tfrac{c_2 \eta}{ \Delta^2} \leq J(\gamma)-\tfrac{\gep}{2}.
\end{equation}
Applying Markov property at time $\tau_\Delta$ we obtain 
\begin{align}\label{decomtau}
\bbE\left[e^{-\gga \int_{0}^{T} \cM_{s,T} \dd s }\right]&\leq \bbP\left(\tau_{\Delta} \geq \eta T \right)+\bbE\left[\ind_{\{\tau_\Delta<\eta T\}}\, 
R_\Delta(T-\tau_\Delta)\right]
\end{align} 
Since  $ \Delta\in (0,\Delta_0]$
we apply \eqref{contrad} and there exists a $T_0>0$ such that $R_{\Delta}(T)\leq e^{(J(\gamma)-\gep/4)T}$
for $T\geq T_0$.
It remains to apply \eqref{decomtau} with $T\geq T_0/(1-\eta)$ in combination with \eqref{taudelta} and \eqref{condeta} to obtain
\begin{align}\label{decomtau2}
\nonumber \bbE\left[e^{-\gga \int_{0}^{T} \cM_s \dd s }\right]&\leq c_1 e^{(J(\gamma)-\frac{\gep}{2})T}+
\bbE\left[\ind_{\{\tau_\Delta<\eta T\}}\, R_{\Delta}(T-\tau_\Delta)\right]\\
\nonumber &\leq c_1 e^{(J(\gamma)-\frac{\gep}{2})T}+
\bbE\left[\ind_{\{\tau_\Delta<\eta T\}}\, e^{(J(\gamma)-\gep/4) (T-\tau_{\Delta})}\right]\\
& \leq c_1 e^{(J(\gamma)-\frac{\gep}{2})T}+e^{(J(\gamma)-\gep/8) T}.
\end{align} 
Taking $\frac{1}{T}\log$ on both sides in \eqref{decomtau2} and letting $T\to \infty$ we obtain the contradiction
$J(\gamma)\leq J(\gamma)-\gep/8$. This completes the proof of the claim.

\medskip

 Let us quickly sketch the proof of the inequality in \eqref{taudelta}. We use that $(\cM_{s})_{s\in [0,1]}$ is the limit in distribution of 
$(B_s)_{s\in [0,1]}$ conditioned on $\{m_1>-u\}$ as $u\to 0^{+}$ (see \cite[Section 2]{Dur77}). Therefore, for $\Delta>0$, $u\in(0,\Delta)$ and 
$\eta\in (0,1)$, 
\begin{equation}\label{smallball}
\bbP\Big[\max_{s\in [0,\eta]} \cM_{s} \leq \Delta \Big]
=\lim_{u\to 0^+} \bbP\Big[\max_{s\in [0,\eta ]} B_{s} \leq \Delta\, \big |\,  m_1>-u\Big].
\end{equation}
By applying Markov property at time $\eta/2$, we can bound from above the probability in the r.h.s. in \eqref{smallball}  by
\begin{equation}\label{smallball2}
\frac{1}{\bbP(m_1>-u)} \, \bbP(B_{s}\in (-u,\Delta ], \forall s\in [0,\eta/2]) 
\sup_{x\in (-u, \Delta]} \bbP_x(B_{s}\in (-u, \Delta],  \forall s\in [0,1-\eta/2 ]),
\end{equation}
and since a Brownian motion $(B_s)_{s\geq 0}$ of law $\bbP_x$ has the same law as $(x+B_s)_{s\geq 0}$ with $(B_s)_{s\geq 0}$ of law $\bbP_0$, \eqref{smallball2} is also smaller than 
\begin{equation}\label{smallball3}
\frac{\bbP(m_{\eta/2}>-u)}{\bbP(m_1>-u)} \, 
\bbP\Big[\max_{s\in[0,\eta/2]}| B_{s}|\leq  2\Delta\Big]
\end{equation}

At this stage, the inequality in the r.h.s. in \eqref{taudelta} is obtained by combining 
\eqref{smallball3} with the following two results:
\begin{equation}\label{reflectprinc}
\bbP(m_{t}>-u)=\sqrt{\frac{2}{\pi}} \int_0^{u/\sqrt{t}} e^{-\frac{x^2}{2}}\, dx, \quad \text{for}\  t,u>0
\end{equation}
and there exist $c_1, c_2>0$ such that for every $\kappa>0$
\begin{equation}\label{smallballbro}
\bbP(\max_{ s\in [0,1 ]} |B_{s}| \leq \kappa )\leq c_1 e^{-c_2/\kappa^2}.
\end{equation}
Note that \eqref{reflectprinc} is obtained by observing that, since $B$ is symetric, $\bbP(m_{t}>-u)=\bbP(M_{t}<u)$, where 
$M_{t}$ is the maximum of $B$ on $[0,t]$ and the law of $M_t$ is well known (see e.g. \cite[Proposition 8.1]{KarShr}). Proving
\eqref{smallballbro} can be done by estimating the probability that $|B|$ is smaller than $\kappa$ at times $\{j/k, \, 1\leq j\leq k\}$
with $k=\lfloor1/\kappa^2\rfloor$ and by applying Markov property at those times.

\end{proof}

\subsection{Step 5 : horizontal extension inside the collapsed phase}
We now prove that in the collapsed phase, the typical horizontal extension of the polymer is of order $\sqrt{L}$ for $L$ large. For any interval $I\subseteq\R$, define
\begin{equation}
\widetilde{Z}_{L,\gb,\gd}^{\,\circ,+}(I) \;=\; \frac{1}{c_\gb e^{\gb L}} Z_{L,\gb,\gd}^{\,\circ,+}(I) \;:=\; \sumtwo{1\leq N\leq L/2,}{N\in I\pt\cap\pt\N} (\gGa_\gb)^{2N} D^{\,\circ}_{N,q_N^L}\;,
\end{equation}
where $q^L_N$ and $D^{\,\circ}_{N,q^L_N}$ are defined in Proposition~\ref{prop:PEZ}.

\begin{lemma}\label{lem:exthor}
Let $(\beta,\delta)\in (\cC_{\mathrm{bead}})^o$, there exists $(a_1,a_2)\in (0,\infty)^2$ such that 
\begin{equation}\label{resthext}
\lim_{L\to \infty} \frac{Z_{L,\beta,\delta}^{o,+} ([a_1,a_2] \sqrt{L})}{Z_{L,\beta,\delta}^{o,+}}=1
\end{equation}

\end{lemma}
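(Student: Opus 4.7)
The plan is to reduce \eqref{resthext} to a Laplace-type analysis of the function $\phi_{(\gb,\gd)}$, using the random-walk representation of Proposition~\ref{prop:PEZ}. Writing $\widetilde Z_{L,\gb,\gd}^{\,\circ,+} = \sum_{N=1}^{L/2} (\gGa_\gb)^{2N} D^{\,\circ}_{N,q_N^L}$ and parametrizing $N=a\sqrt{L}$, one has $q_N^L \to 1/(2a^2)$ as $L\to\infty$, so for $a$ in any compact subset of $(0,\infty)$, $q_N^L$ stays in a compact subset of $(0,\infty)$. Combining Proposition~\ref{prop:estimD} with Proposition~\ref{prop:ENq:AC} in the phase $\cA\cC$, or with the bound $E^{\,\circ}_{N,q}\leq Z_{\wet,N}^{\gb,\gd}$ (standard polynomial control on the wetting partition function) and the lower bound $E^{\,\circ}_{N,q}\geq E_N(\partial_qg(q,0))$ from Proposition~\ref{convinvblock} in $\cD\cC$ (where $h_\gb(\gd)=0$), will yield the uniform estimate
\[
(\gGa_\gb)^{2N} D^{\,\circ}_{N,q_N^L} \;=\; \exp\!\bigl(\phi_{(\gb,\gd)}(a)\sqrt{L} + o(\sqrt{L})\bigr), \qquad L\to\infty,
\]
uniformly in $a$ on any compact subset of $(0,\infty)$.

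Next, I would analyze the function $\phi_{(\gb,\gd)}:(0,\infty)\to\R$. Strict concavity follows from the convexity of the rate function $g$ defined in \eqref{def:gqp}. As $a\to\infty$, $g(1/(2a^2),0)\to g(0,0)=0$ and $\phi(a)\sim a(2\log\gGa_\gb+h_\gb(\gd))\to-\infty$, since $2\log\gGa_\gb+h_\gb(\gd)<0$ on the interior of $\cC_{\mathrm{bead}}$. As $a\to0^+$, $q=1/(2a^2)\to\infty$, and $g(q,0)$ grows at least linearly in $q$ (because $\cL$ has finite effective domain $(-\gb/2,\gb/2)$, which forces $\tilde h_0(q,0)\to\gb/2$ as $q\to\infty$); hence $a\pt g(1/(2a^2),0)\to\infty$ and $\phi(a)\to-\infty$. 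Consequently, $\phi_{(\gb,\gd)}$ attains its maximum at a unique point $\tilde a\in(0,\infty)$, and $\Phi(\gb,\gd):=\phi_{(\gb,\gd)}(\tilde a)<0$.

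I would then pick $a_1<\tilde a<a_2$ arbitrary. The lower bound $\widetilde Z_{L,\gb,\gd}^{\,\circ,+}([a_1,a_2]\sqrt{L}) \geq L^{-C_1} \exp(\Phi(\gb,\gd)\sqrt{L}(1+o(1)))$ follows by restricting to a single $N_L\approx\tilde a\sqrt{L}$ and invoking the uniform estimate above. For the upper bound on the complementary sum, I would split the range of $N$ into three regimes depending on a large constant $M$: \emph{(i)} $N\in[\sqrt L/M,M\sqrt L]\setminus[a_1,a_2]\sqrt L$: the uniform estimate together with strict concavity gives $\sup\phi_{(\gb,\gd)}\leq \Phi(\gb,\gd)-c(M)$ on this region for some $c(M)>0$, hence total contribution $\leq \sqrt{L}\cdot L^{-C_1}e^{(\Phi-c(M))\sqrt L}$; \emph{(ii)} $N\geq M\sqrt L$: using the crude bound $D^{\,\circ}_{N,q}\leq Z_{\wet,N}^{\gb,\gd}\leq Ce^{h_\gb(\gd)N}$ (obtained by dropping the constraints on $S$ and the area penalization), one gets $(\gGa_\gb)^{2N}D^{\,\circ}_{N,q}\leq Ce^{-\kappa N}$ with $\kappa:=-(2\log\gGa_\gb+h_\gb(\gd))>0$, summing to $\leq L\pt Ce^{-\kappa M\sqrt L}$; \emph{(iii)} $N\leq\sqrt L/M$: bounding $D^{\,\circ}_{N,q_N^L}\leq e^{\gd N}\Pbbzero(A_{N+1}(S)\geq q_N^L N^2)$ and applying an exponential Chebyshev inequality based on the finite exponential moments of $\Pbb$ (valid for exponents below $\gb/2$), combined with the estimate $|A_{N+1}(S)|\leq (N+1)\sum_{j=1}^{N+1}|S_j-S_{j-1}|$, will yield the decay $\leq e^{-cL/N}\leq e^{-cM\sqrt L}$.

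Choosing $M$ large enough so that (ii) and (iii) both produce contributions negligible compared to $e^{\Phi(\gb,\gd)\sqrt L}$ will complete the proof. In my view the main technical obstacle is case (iii): the sharp estimates of Proposition~\ref{prop:estimD} do not apply when $q_N^L$ escapes to infinity, so one must work directly with the exponential tails of $\Pbb$ and relate the area of the upper envelope $S$ to the sum of its step sizes. By contrast, the strict concavity and boundary behavior of $\phi_{(\gb,\gd)}$ are robust consequences of the properties of $g$ already recorded in the paper, and case (ii) only requires the uniform upper bound on the wetting partition function.
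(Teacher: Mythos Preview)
Your approach is correct, but it is considerably more elaborate than the paper's and imports machinery that is not needed here. The key difference is organizational: you anchor the argument on the Laplace analysis of $\phi_{(\gb,\gd)}$ and prove the stronger statement that \emph{any} $a_1<\tilde a<a_2$ works, which forces you to introduce an intermediate regime (your case~(i)) and to invoke Propositions~\ref{prop:ENq:AC} and~\ref{convinvblock} to control $E^{\,\circ}_{N,q}$ from both sides. The paper, by contrast, only proves existence of \emph{some} $a_1,a_2$, and this allows a much cruder argument: the tail $N\geq a_2\sqrt L$ is handled exactly as your case~(ii), the tail $N\leq a_1\sqrt L$ exactly as your case~(iii), and for the lower bound the paper takes a single term $N=\sqrt L$, applies Proposition~\ref{prop:estimD}, and bounds $E^{\,\circ}_{N,q}$ from below by restricting to the trajectory $I\equiv 0$, giving simply $E^{\,\circ}_{\sqrt L,q}\geq (e^\gd/c_\gb)^{\sqrt L}$. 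This yields a fixed exponential rate $\kappa$ for the denominator, and since the tail rates $c_2 a_2$ and $\rho(a_1)-\gd a_1$ can be made arbitrarily large, one simply chooses $a_1$ small and $a_2$ large accordingly. No case~(i), no concavity of $\phi$, no $\tilde a$, and no estimates on $E^{\,\circ}_{N,q}$ beyond the trivial one are required at this stage; the Laplace analysis is deferred entirely to Step~6. What your route buys is a sharper localization (around $\tilde a$) already at the level of the lemma; what the paper's route buys is a self-contained argument that does not presuppose Steps~3--4.
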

\begin{proof}
Recall \eqref{PEZ} and \eqref{def:DNL}. By relaxing all constraints on $S,I$ but $\{I\in B_N^{0,+}\}$ in \eqref{def:DNL}, one has the obvious bound $D^{\,\circ}_{N,q}\leq Z_{\wet,N}^{\gb,\gd}$ (recall \eqref{def:Zwetting}). Moreover Proposition~\ref{prop:wetpoint}~(ii) implies that there exists some $K>0$ such that $Z_{\wet,N}^{\gb,\gd}\leq K e^{h_\gb(\gd)N}$ for $N\geq1$. Thereby,
\begin{equation*}
\widetilde Z_{L,\beta,\delta}^{o,+} ([a_2\sqrt{L},L])=\sum_{N=a_2 \sqrt{L}}^{L/2} (\Gamma_\beta)^{2N} D^{\,\circ}_{N,q_{N}^L}
\leq  \sum_{N=a_2 \sqrt{L}}^{L/2} (\gGa_\gb)^{2N} \pt K \pt e^{h_\gb(\gd)N}\;.
\end{equation*}
In $(\cC_{\mathrm{bead}})^o$, one has $2\log \gGa_\gb +h_\gb(\gd)<0$ (recall \eqref{eq:cc:bead}), so we conclude that there exist $c_1,c_2>0$ such that
\begin{equation}\label{fbz}
\widetilde Z_{L,\beta,\delta}^{o,+} ([a_2\sqrt{L},L])\leq c_1 
e^{-c_2 a_2 \sqrt{L}}\;,
\end{equation}
for all $L\geq1$ and $a_2\in\frac{\N}{\sqrt{L}}$.

Regarding trajectories with an horizontal extension smaller than $a_1 \sqrt{L}$, we use $\Gamma_\beta\leq 1$ and (\ref{PEZ}--\ref{def:DNL}) to write
\begin{align}\label{fbz2}
 \widetilde Z_{L,\beta,\delta}^{o,+} ([1,a_1\sqrt{L}])\leq e^{\gd a_1 \sqrt{L}} \sum_{N\leq a_1 \sqrt{L}} \Pbb(A_{N+1}(S)\geq \tfrac{L}{2}-N).
 \end{align}
The inequality $A_{N+1}(S)\geq \frac{L}{2}-N$ implies that $\max\{S_i, i\leq N\}\geq \frac{L/2-N}{N+1}$. Therefore, for $L$ large enough   we can assert that for every  $a_1\leq 1$  and  
 $N\leq a_1 \sqrt{L}$
\begin{equation}\label{lda}
 \Pbb(A_{N+1}(S)\geq \tfrac{L}{2}-N)\leq  \Pbb\big(\sum_{i\leq a_1\sqrt{L}} |S_i-S_{i-1}|\geq \tfrac{\sqrt{L}}{4 a_1}\big).
\end{equation}
Since $|S_1-S_0|$ (of law $\Pbb$) has finite small exponential moments, Chernov's bound guarantees us that there exists a function 
$\rho:(0,\infty)\mapsto \R^+$ satisfying $\lim_{a_1\to 0^+} \rho(a_1)=\infty$ such that 
the r.h.s. in \eqref{lda}
is bounded above by $e^{-\rho(a_1)\sqrt{L}}$. Therefore, for $L$ large enough and $a_1\leq 1$,
\begin{align}\label{fbz22}
 \widetilde Z_{L,\beta,\delta}^{o,+} ([1,a_1\sqrt{L}])\leq \sqrt{L} e^{-(\rho(a_1)-\gd a_1)\sqrt{L}}.
 \end{align}
 At this stage it remains to display a lower bound on $Z_{L,\beta,\delta}^{o,+}$. To that aim we only consider the term indexed by 
 $N=\sqrt{L}$ in the sum in \eqref{PEZ}. Applying Proposition \eqref{prop:estimD} with $q=\frac{1}{2}-\frac{1}{\sqrt{L}}$ and using that $g$ is $\cC^1$ we deduce that there exists a $C>0$ such that 
\begin{align}\label{upperb}
\widetilde Z_{L,\beta,\delta}^{o,+} &\geq \frac{C}{L} \, e^{-g(\frac{1}{2},0) \sqrt{L}} \, E^{\,\circ}_{\sqrt{L},\, \frac{1}{2}-\frac{1}{\sqrt{L}}}.
\end{align}

It remains to bound  $E^{\,\circ}_{\sqrt{L},\,  \frac12-\frac{1}{\sqrt{L}}}$ from below by choosing the trajectory $(I_i)_{i=0}^{\sqrt{L}}$ that sticks to zero, i.e., $E^{\,\circ}_{\sqrt{L},\,\frac{1}{2}-\frac{1}{\sqrt{L}}}\geq (e^\delta/c_\beta)^{\sqrt{L}}$. We set  $\kappa:=\delta-g(1/2,0)-\log c_\beta$ and we obtain that for $L$ large enough,
\begin{align}\label{upperb2}
\widetilde Z_{L,\beta,\delta}^{o,+} &\geq \frac{C}{L} e^{\kappa \sqrt{L}}.
\end{align}
Combining \eqref{fbz}, \eqref{fbz22} and \eqref{upperb2}, it suffices to choose $a_1$ (resp. $a_2$) small enough
(resp. large enough) for \eqref{resthext} to hold.
\end{proof}

\subsection{Step 6 : proof of Theorem~\ref{th:partfunbead}}
We finally have all required estimates to prove Theorem~\ref{th:partfunbead}. Assume $\gb>\gb_c$ and $\gd<\gd^{\pt\circ}_c(\gb)$. Applying Proposition~\ref{prop:PEZ} and Lemma~\ref{lem:exthor}, we can restrict the sum in \eqref{PEZ} to $N\in[a_1,a_1]\sqrt{L}$. In particular this implies that $q^L_N=\frac{L-2N}{2N^2}\in [\tfrac{1}{2a_2^2}-\tfrac{1}{a_1\sqrt{L}},\frac{1}{2a_1^2}]$, and $\tfrac{1}{2a_2^2}-\tfrac{1}{a_1\sqrt{L}}>0$ for $L$ sufficiently large. Thereby Proposition~\ref{prop:estimD} yields
\begin{equation}\label{eq:prfthm2:gen}
Z^{\,\circ,+}_{L,\gb,\gd} \;\asymp\; \frac{e^{\gb L}}{L} \sum_{N=a_1\sqrt L}^{a_2\sqrt L} \big(\gGa_\gb\big)^{2N} e^{-N g\big(\tfrac{L}{2N^2},0\big)} E^{\,\circ}_{N,q_N^L}\;,
\end{equation}
uniformly in $L\in\N$ sufficiently large, and where we also used that $g$ is $\cC^1$, so $g(q^L_N,0)=g(\frac{L}{2N^2},0)+O(1/N)$ uniformly in $N\in[a_1,a_1]\sqrt{L}$.

\subsubsection*{Case $\tilde\gd_c(\gb)<\gd<\gd^{\pt\circ}_c(\gb)$} We apply Proposition~\ref{prop:ENq:AC} to \eqref{eq:prfthm2:gen}, to write
\begin{equation}\label{eq:prfthm2}
Z^{\,\circ,+}_{L,\gb,\gd} \;\asymp\; \frac{e^{\gb L}}{L} \sum_{N=a_1\sqrt L}^{a_2\sqrt L} e^{\sqrt{L} \pt \phi(\frac{N}{\sqrt{L}})}\;,
\end{equation}
where we define for all $a>0$,
\begin{equation}\label{eq:defphi}
\phi(a)\;:=\; a \bigg(2\log\gGa_\gb + h_\gb(\gd) - g\Big(\frac{1}{2a^2},0\Big) \bigg)\;.
\end{equation}
Notice that $\phi$ is $\cC^2$ and negative on $(0,\infty)$ (recall that $g$ is non-negative and $2\log\gGa_\gb +h_\gb(\gd)<0$ in $(\cC_{\mathrm{bead}})^o$). We claim that it is strictly concave, and that it reaches its maximum at some $\tilde a\in(0,\infty)$. Indeed we have
\begin{align*}
\phi'(a)\;&=\; 2\log\gGa_\gb + h_\gb(\gd) - g\Big(\frac1{2a^2},0\Big) +\frac1{a^2}\partial_qg\Big(\frac1{2a^2},0\Big)\;,\\
\phi''(a)\;&=\; -\frac1{a^3} \partial_qg\Big(\frac1{2a^2},0\Big) -\frac1{a^5}\partial_q^2g\Big(\frac1{2a^2},0\Big)\;.
\end{align*}
Recall that $\partial_qg\big(\frac2{a^2},0\big)=\tilde h_0^{(2a^{-2},0)}>0$ (see \eqref{eq:gradg} and Section~\ref{proof:DNLSI}), and $g$ is convex hence $\partial_q^2g\big(\frac2{a^2},0\big)\geq0$; therefore $\phi''(a)<0$ for all $a\in(0,\infty)$. Besides, $\phi'(a)$ can be written
\begin{equation}\label{eq:prfthm2:phi'}
\phi'(a)\;=\; 2\log\gGa_\gb + h_\gb(\gd) + \frac1{2a^2} \tilde h_0^{2a^{-2},0} + \frac12 \cL_\gL\Big(\tilde \bh^{2a^{-2},0}\Big)\;.
\end{equation}
Recall that $2\log\gGa_\gb + h_\gb(\gd)<0$ whenever $(\gb,\gd)\in(\cC_{\mathrm{bead}})^{o}$. Moreover $\tilde\bh$ is continuous and $\tilde\bh^{0,0}=(0,0)$, so $\phi'(a)<0$ for $a$ sufficiently large. As $a\searrow0$, one has $\tilde h_0^{2a^{-2},0}\to\frac{\gb}{2}>0$ and $\cL_\gL\big(\tilde \bh^{2a^{-2},0}\big)\geq0$; hence $\phi'(a)>0$ for $a$ sufficiently small.

This proves that $\sup_{(0,\infty)}\phi=\phi(\tilde a)$ for some $\tilde a = \tilde a(\gb,\gd)\in(0,\infty)$. Provided that $a_1$ (resp. $a_2$) is sufficiently small (resp. large), we can assume $\tilde a\in(a_1,a_2)$. Because $f$ is strictly concave and $\cC^2$ on $[a_1,a_2]$, there are constants $c,c'>0$ such that
\begin{equation}\label{eq:phiconv}
-c(a-\tilde a)^2 \;\leq\; \phi(a)-\phi(\tilde a) \;\leq\; -c'(a-\tilde a)^2\;,
\end{equation}
for all $a\in[a_1,a_2]$. Thus we can rewrite \eqref{eq:prfthm2} as
\begin{equation}\label{eq:prfthm2:1}\begin{aligned}
&C_\cntc \frac{e^{\gb L}}{L} e^{\sqrt{L}\pt\phi(\tilde a)} \sum_{N=a_1\sqrt L}^{a_2\sqrt L} e^{-c\sqrt{L} \big(\frac{N}{\sqrt{L}}-\tilde a\big)^2}  \\
&\quad\;\leq\; Z^{\,\circ,+}_{L,\gb,\gd} \;\leq\; C_\cntc \frac{e^{\gb L}}{L} e^{\sqrt{L}\pt\phi(\tilde a)} \sum_{N=a_1\sqrt L}^{a_2\sqrt L} e^{-c'\sqrt{L} \big(\frac{N}{\sqrt{L}}-\tilde a\big)^2}\;.
\end{aligned}\end{equation}
Finally, we observe that for any $c>0$ and $b_1<0<b_2$, one has
\begin{equation}\label{eq:riemannsum}
\sumtwo{b_1\sqrt L\leq N\leq b_2\sqrt L,\vspace{1pt}}{N\in\bbZ} e^{-c \frac{N^2}{\sqrt{L}}} \;\asymp\; L^{1/4}\;.
\end{equation}
Indeed, let $R>0$, and define $I_{R,L}=[-R\pt L^{1/4},R\pt L^{1/4}]$. We decompose the above sum into $S^{L,\gb,R}_1+S^{L,\gb,R}_2$, where
\begin{align*}
S^{L,\gb,R}_1\;:= \sum_{N\in I_{R,L}\cap\pt\bbZ} e^{-c \frac{N^2}{\sqrt{L}}}\;,
 \qquad \quad \text{and} \qquad S^{L,\gb,R}_2\;:= \sumtwo{N\in[b_1\sqrt{L},b_2\sqrt{L}]\pt\setminus\pt I_{R,L},}{N\in\bbZ} e^{-c \frac{N^2}{\sqrt{L}}}\;.
\end{align*}
Let us first handle $S^{L,\gb,R}_1$. With a Riemann sum approximation, we have
\begin{equation}\label{eq:prfthm2:lem}
L^{-1/4} \pt S^{L,\gb,R}_1 = L^{-1/4}\sum_{N=-RL^{1/4}}^{RL^{1/4}} e^{-c \big(\frac{N}{L^{1/4}}\big)^2} \;\underset{L\to\infty}\longrightarrow \; \int_{-R}^R e^{-cx^2}\dd x\;,
\end{equation}
therefore $S^{L,\gb,R}_1\sim L^{1/4} \int_{-R}^R e^{-cx^2}\dd x$ as $L\to\infty$. Regarding $S^{L,\gb,R}_2$, we have
\begin{equation*}
\sumtwo{N\in[b_1\sqrt L,b_2\sqrt L]\pt\setminus\pt I_{R,L},}{N\in\bbZ} e^{-c \frac{N^2}{\sqrt{L}}} \;\leq\; 2\!\!\! \sum_{N> R\pt L^{1/4}} e^{-c\big(\frac{N}{L^{1/4}}\big)^2}\;,
\end{equation*}
and a Riemann sum approximation yields that
\begin{equation}\label{eq:prfthm2:lem:2}
\lim_{L\to\infty} \; L^{-1/4}\sum_{N>R\pt L^{1/4}} e^{-c\big(\frac{N}{L^{1/4}}\big)^2} \;=\;\int_R^\infty e^{-cx^2} \dd x\;.
\end{equation}
Hence there is $C_R>0$ such that $S^{L,\gb,R}_2 \leq C_R L^{1/4}$ for all $L\geq1$. Together with \eqref{eq:prfthm2:lem}, this concludes the proof of \eqref{eq:riemannsum}. Plugging \eqref{eq:riemannsum} into \eqref{eq:prfthm2:1}, this proves Theorem~\ref{th:partfunbead} in $(\cA\cC)^o$.

\subsubsection*{Case $\gd=0$} We now prove the Theorem when $\gd=0$ ---then the case $0<\gd\leq\tilde\gd_c(\gb)$ will be a straightforward consequence of the other two. Let $\eps>0$. Recollecting \eqref{eq:prfthm2:gen} and Proposition~\ref{convinvblock}, and noticing that $q\mapsto J(\gs_\gb\pt\partial_qg(q,0))$ is $\cC^1$ on $(0,\infty)$; we can write the bounds 
\begin{align}
\nonumber & C_\cntc \frac{e^{\gb L}}{L} \sum_{N=a_1\sqrt L}^{a_2\sqrt L} \exp\Big(\sqrt L \pt\phi\big(\tfrac{N}{\sqrt{L}}\big)+L^{1/6} \big(\psi\big(\tfrac N{\sqrt{L}}\big)-\eps\big)\Big) \\
& \label{eq:prfthm2:2} \quad \leq \; Z^{\,\circ,+}_{L,\gb,0} \;\leq\; C_\cntc \frac{e^{\gb L}}{L} \sum_{N=a_1\sqrt L}^{a_2\sqrt L} \exp\Big(\sqrt L \pt\phi\big(\tfrac{N}{\sqrt{L}}\big)+L^{1/6} \big(\psi\big(\tfrac N{\sqrt{L}}\big)+\eps\big)\Big) \;,
\end{align}
for $L$ sufficiently large, where $\phi$ is defined in \eqref{eq:defphi}, and we define for all $a\in(0,\infty)$,
\begin{equation}\label{eq:defpsi}
\psi(a)\;:=\; a^{1/3} \pt J\big(\gs_\gb\pt\partial_qg(\tfrac{1}{2a^2},0)\big)\;<\;0\;.
\end{equation}
Notice that we can drop the constant and polynomial factors by slightly adjusting $\eps$. Let us fix some $R>0$ and define
\[
I_{R,L}^1\;:=\; \tilde a \sqrt{L} + [-RL^{1/3}, RL^{1/3}] \qquad \text{and} \quad I_{R,L}^2\;:=\; [a_1 \sqrt L, a_2 \sqrt L] \setminus I_{R,L}^1\;,
\]
so \eqref{eq:prfthm2:2} yields
\begin{equation}\label{eq:prfthm2:2.5}e^{\gb L} Q^1_{L,-\eps} \leq Z^{\,\circ,+}_{L,\gb,0} \leq e^{\gb L} (Q^1_{L,\eps} + Q^2_{L,\eps})\;,\end{equation}
where we define for any $i\in\{1,2\}$ and $\gh\in\{-\eps,\eps\}$, 
\[ Q_{L,\gh}^i \;:=\; \sum_{N\in I_{R,L}^i} \exp\Big(\sqrt L \pt\phi\big(\tfrac{N}{\sqrt{L}}\big)+L^{1/6} \big(\psi\big(\tfrac N{\sqrt{L}}\big)+\gh\big)\Big)\;.
\]
Regarding the lower bound, recall that $\psi$ is $\cC^1$ on $(0,\infty)$, so there is some $C_\cntc>0$ such that
\begin{equation}\label{eq:prfthm2:3} |\psi(\tfrac N{\sqrt L})-\psi(\tilde a)| \; \leq \; C_{\arabic{cst}} R L^{-1/6}\;,
\end{equation} uniformly in $L$ sufficiently large and $N\in I_{R,L}^1$. Hence for $L$ large,
\begin{equation}\label{eq:prfthm2:4}\begin{aligned}
Q^1_{L,-\eps} \;&\geq\; \exp\big(\sqrt{L} \pt\phi(\tilde a) + L^{1/6}(\psi(\tilde a)-\eps) - C_{\arabic{cst}}R\big) \\
&\qquad\times \sum_{N=\tilde a\sqrt L -L^{1/3}}^{\tilde a\sqrt L +L^{1/3}} \exp\Big(\sqrt L \big(\phi(\tfrac{N}{\sqrt{L}})-\phi(\tilde a)\big)\Big)\;,
\end{aligned}\end{equation}
and recalling \eqref{eq:phiconv} to \eqref{eq:prfthm2:lem:2}, the latter sum is of order $C_\cntc L^{1/4} \geq 1$ when $L\to\infty$, which yields the expected lower bound (by slightly adjusting $\eps$).

Similarly to the lower bound just above, we deduce from \eqref{eq:prfthm2:3} and from \eqref{eq:phiconv} to \eqref{eq:prfthm2:lem:2} that
\begin{equation}\label{eq:prfthm2:5} Q_{L,\eps}^1 \;\leq\; \exp\big( \sqrt{L} \pt\phi(\tilde a) + L^{1/6}(\psi(\tilde a)+2\eps) \big)\;,
\end{equation}
for $L$ sufficiently large. Regarding $Q_{L,\eps}^2$, \eqref{eq:phiconv} yields that for all $N\in I_{R,L}^2$,
\[
\sqrt{L}\pt\big(\phi(\tfrac N{\sqrt{L}})-\phi(\tilde a)\big) \;\leq\; -c'R^2L^{1/6}\;,
\]
Moreover $\psi(N/\sqrt L) +\eps \leq \tfrac12\sup_{[a_1,a_2]} \psi<0$ for all $N\in[a_1\sqrt L, a_2 \sqrt L]$, $L\in\N$ provided that $\eps$ is sufficiently small. Thereby,
\[
Q_{L,\eps}^2 \;\leq\; (a_2-a_1)\sqrt{L} \times \exp\big(\sqrt L\pt\phi(\tilde a)-c'R^2L^{1/6}\big) \;.
\]
Finally, noticing that $Q_{L,\eps}^1\geq Q_{L,-\eps}^1$ and recalling \eqref{eq:prfthm2:4}, we have $Q_{L,\eps}^2/Q_{L,\eps}^1 = o(1)$, provided that $R$ satisfies $-c'R^2<\inf_{[a_1,a_2]}\psi-\eps$. Recollecting \eqref{eq:prfthm2:2.5} this completes the proof of the upper bound, and concludes the case $\gd=0$.

\subsubsection*{Case $0<\gd\leq \tilde \gd_c(\gb)$} When $\gd\in(0,\tilde \gd(\gb)]$, we do not provide sharp estimates but we can still claim the following: first, $\gd\mapsto Z^{\,\circ,+}_{L,\gb,\gd}$ is clearly non-decreasing, so $Z^{\,\circ,+}_{L,\gb,\gd}\geq Z^{\,\circ,+}_{L,\gb,0}$, and the lower bound from the case $\gd=0$ also holds for any $\gd>0$. Secondly, the recall that $E^{\,\circ}_{N,q} \leq Z_{\wet, N}^{\gb,\gd}$ for all $q>0$, $N\in\N$ (see \eqref{def:Enq}), so Proposition~\ref{prop:wetpoint}~(ii) implies that there exists $C_\cntc>0$ (which depends on $\gb,\gd$) such that $E^{\,\circ}_{N,q} \leq C_{\arabic{cst}} $ for $N\in\N$. Thus \eqref{eq:prfthm2:gen} yields
\begin{equation*}
Z^{\,\circ,+}_{L,\gb,\gd} \;\leq\; \frac{e^{\gb L}}{L} \pt C_{\arabic{cst}} \sum_{N=a_1\sqrt L}^{a_2\sqrt L} e^{\sqrt{L} \pt \phi(\frac{N}{\sqrt{L}})} \;,
\end{equation*}
where $\phi$ is defined in \eqref{eq:defphi}, with $h_\gb(\gd)=0$ for all $\gd\in(0,\tilde \gd(\gb)]$. Then the behavior of the sum above is the same as in the case $\gd\in(\tilde \gd(\gb), \gd^{\pt\circ}_c(\gb))$, which yields the same upper bound. This fully concludes the proof of Theorem~\ref{th:partfunbead}, subject to Lemmas~\ref{lem:FKG} and \ref{lem:Dxm}, and Proposition~\ref{prop:DNLSI}.

\subsection{Proof of Corollary~\ref{corol:partfunbead}}\label{proof:traj}
In this section we prove Corollary~\ref{corol:partfunbead}, which follows directly from Theorem~\ref{th:partfunbead}. 
Let $\gb>\gb_c$. We first focus on the case $\gd\in(\tilde \delta_c(\beta), \delta^{\circ}_c(\beta))$. Recall the expression of $\phi'(a)=\phi'_{(\gb,\gd)}(a)$ in \eqref{eq:prfthm2:phi'} and the definition of $\tilde a(\gb,\gd)$, then notice that $(\gd,a)\mapsto\phi'_{(\gb,\gd)}(a)$ is $\cC^1$ on $(\tilde \delta_c(\beta), \delta^{\circ}_c(\beta))\times(0,+\infty)$ and $\phi''_{(\gb,\gd)}(\tilde a(\gb,\gd))<0$. It follows from the implicit function theorem that $\gd\mapsto \tilde a(\gb,\gd)$ is $\cC^1$ on $(\tilde \delta_c(\beta), \delta^{\circ}_c(\beta))$. Hence $\gd\mapsto\Phi(\gb,\gd)$ is $\cC^1$ on $(\tilde \delta_c(\beta), \delta^{\circ}_c(\beta))$ too.

Let $\gd\in(\tilde \delta_c(\beta), \delta^{\circ}_c(\beta))$ and $\eps>0$, and let us denote for $\ell\in\gO_L^{\,\circ,+}$,
\[ Q(\ell)\;:=\; \sum_{k=1}^{N_\ell}\ind_{\{\sum_{i=1}^k\ell_i=0\}} , \]
for conciseness. Let $t>0$ be such that $\gd+t\in(\tilde \delta_c(\beta), \delta^{\circ}_c(\beta))$, and write with Chernov's bound
\begin{align*}
\bP_{L,\beta,\delta}^{\,\circ,+} \Big( Q(\ell) \geq \big(\partial_{\gd}\Phi(\gb,\gd)+\eps\big) \sqrt{L} \Big)  \;&\leq\; \bE_{L,\beta,\delta}^{\,\circ,+}\big[e^{t \pt Q(\ell)}\big] e^{-t \pt (\partial_{\gd}\Phi(\gb,\gd)+\eps)\sqrt L}  \\
&\leq\; \frac{Z_{L,\beta,\delta+t}^{\,\circ,+}}{Z_{L,\beta,\delta}^{\,\circ,+}} e^{-t \pt (\partial_{\gd}\Phi(\gb,\gd)+\eps)\sqrt L} \:.
\end{align*}
Applying Theorem~\ref{th:partfunbead} $(i)$, we therefore get that there exists some $C_\cntc>0$ such that for $L\geq1$,
\begin{equation*}
\bP_{L,\beta,\delta}^{\,\circ,+} \Big( Q(\ell) \geq \big(\partial_{\gd}\Phi(\gb,\gd)+\eps\big) \sqrt{L} \Big) \:\leq\: C_{\arabic{cst}} e^{ (\Phi(\gb,\gd+t)-\Phi(\gb,\gd)  -t \pt\partial_{\gd}\Phi(\gb,\gd) -t \pt\eps)\sqrt L}\,.
\end{equation*}
Finally, $\Phi(\gb,\gd+t)-\Phi(\gb,\gd)  -t \pt\partial_{\gd}\Phi(\gb,\gd)=o(t)$ as $t\to0$ (since $\delta\mapsto \Phi(\gb,\gd)$ is $\cC^1$), so we can fix $t>0$ such that the r.h.s. above goes to 0 as $L\to\infty$. Similarly, we can write for $s>0$ such that $\gd-s\in(\tilde \delta_c(\beta), \delta^{\circ}_c(\beta))$,
\begin{align*}
\bP_{L,\beta,\delta}^{\,\circ,+} \Big( Q(\ell) \leq \big(\partial_{\gd}\Phi(\gb,\gd)-\eps\big) \sqrt{L} \Big) \;&\leq\; \frac{Z_{L,\beta,\delta-s}^{\,\circ,+}}{Z_{L,\beta,\delta}^{\,\circ,+}} e^{s \pt (\partial_{\gd}\Phi(\gb,\gd)-\eps)\sqrt L}  \\
&\leq\; C_\cntc e^{(\Phi(\gb,\gd-s)-\Phi(\gb,\gd)  +s \pt\partial_{\gd}\Phi(\gb,\gd) -s \pt\eps)\sqrt L} 
\;,
\end{align*}
for some $C_{\arabic{cst}}>0$, where we also applied Chernov's bound and Theorem~\ref{th:partfunbead} $(i)$. We can then choose $s>0$ such that this term also goes to $0$ as $L\to\infty$. This concludes the proof of Corollary~\ref{corol:partfunbead}-(i).

Regarding the case $\gd\in[0,\tilde \delta_c(\beta))$, let $K>0$, $\eps>0$ and $t>0$ be such that $\gd+t<\tilde \delta_c(\beta)$. Using once again Chernov's bound and Theorem~\ref{th:partfunbead} $(ii)$, there exist $C_\cntc>0$ and $L_0\in\N$ such that for $L\geq L_0$,
\begin{align*}
\bP_{L,\beta,\delta}^{\,\circ,+} \Big( Q(\ell) \geq K L^{1/6} \Big)  \;&\leq\; \bE_{L,\beta,\delta}^{\,\circ,+}\big[e^{t \pt Q(\ell)}\big] e^{-t\pt K L^{1/6}}  \\
&\leq\; \frac{Z_{L,\beta,\delta+t}^{\,\circ,+}}{Z_{L,\beta,\delta}^{\,\circ,+}} e^{-t\pt K L^{1/6}} \leq\; C_{\arabic{cst}} L^{-3/4} e^{-(\Psi(\gb) +\eps -t\pt K) L^{1/6} }
\:.
\end{align*}
This goes to $0$ as $L\to\infty$ provided that $K$ has been fixed sufficiently large, and concludes the proof of Corollary~\ref{corol:partfunbead}-(ii).

\section{Proof of Theorem \ref{Phase-diag} and Proposition~\ref{Phase-diag:bead}}\label{sec:prth11}
\subsection{Proof of Theorem \ref{Phase-diag}}
The proof of Theorem~\ref{Phase-diag} also relies on the random walk representation introduced in \cite{NGP13} and adapted in the proof of Theorem~\ref{th:partfunbead} (see Section~\ref{sec:prth22}), but it is much less involved. We devide the proof into 4 steps. First, we prove that the free energy is not changed when we additionally constrain the polymer to end on the horizontal axis (that is $\sum_{i=1}^N\ell_i=0$) ---in particular the \emph{constrained} partition function $Z_{L,\gb,\gd}^{+,c}$ is super-multiplicative, which implies the well-posedness of $f$. Next, we adapt to the present model the random-walk representation of IPDSAW. As in Section~\ref{sec:prth22}, we derive a probabilistic representation of the partition function by rewriting, for every $N\leq L$, the contribution to the partition function of those trajectories made of $N$ stretches, in terms of two auxiliary (coupled) random walks $S$ and $I$. Then we compute the generating function of the partition function $Z_{L,\gb,\gd}^{+,c}$. With our random walk representation, we can rewrite it as the partition function of a wetting model for two independent random walks, with the in-between area constraint of $S$ and $I$ becoming an in-between area penalization in the generating function. This allows us to characterize $\tilde f$ in terms of the free energy of this ``coupled, in-between area penalized'' wetting model. Finally we place ourselves on the critical curve between $\cC$ and $\cE$ where the area penalization vanishes, and we apply well-known results on the wetting model to derive the equation of the curve.

\subsubsection{Step 1: constraining the partition function.}
Let us define the \emph{constrained partition function}:
\begin{equation}\label{def:Zc}
Z_{L,\gb,\gd}^{+,c} \;:=\; \sum_{N=1}^L \sum_{\substack{{\ell\in \cL^+_{N,L}}\\{\ell_0=\ell_{N+1}=0}}} e^{H(\ell)} \ind_{\left\{\sum_{i=1}^N \ell_i = 0\right\}}\;.
\end{equation}
Notice that this partition function is super-multiplicative: indeed for any $L_1,L_2\geq0$, we bound $Z_{L_1+L_2,\gb,\gd}^{+,c}$ from below by constraining it to touch the axis between the $(L_1-1)$-th and $L_1$-th monomers; then we notice that the contribution to $H(\ell)$ from self-touching between the two parts of the polymer (before and after the segment $(L_1-1,L_1)$) are non-negative; and we separate the trajectory in two at the $L_1$-th monomer (recall that all our trajectories end with a horizontal step) to finally write
\begin{equation}
Z_{L_1+L_2,\gb,\gd}^{+,c} \,\geq\, \pt Z_{L_1,\gb,\gd}^{+,c} \pt Z_{L_2,\gb,\gd}^{+,c}\,.
\end{equation}
Hence $\frac1L \log Z_{L,\gb,\gd}^{+,c}$ converges as $L\to\infty$, by Fekete's lemma.

We now claim that $Z_{L,\gb,\gd}^{+}$ and $Z_{L,\gb,\gd}^{+,c}$ are comparable. More precisely for $L\in\N$,
\begin{equation}\label{eq:Zc}
\frac1{L^2}\big(Z_{L,\gb,\gd}^{+}\big)^2\;\leq\;Z_{2L,\gb,\gd}^{+,c}\;\leq\;Z_{2L,\gb,\gd}^{+}\;.
\end{equation}
The upperbound is straightforward. For the lower bound, we constrain $Z_{2L,\gb,\gd}^{+,c}$ to make a horizontal step between the $(L-1)$-th and $L$-th monomers, and sum over its possible heights. Writing $z_{L-1}=(x_{L-1},y_{L-1})$ and $z_L=(x_L,y_L)$ the coordinates of those monomers, we have
\begin{equation*}
Z_{2L+2,\gb,\gd}^{+,c} \;\geq\; \sum_{y=0}^{L-1} Z_{2L,\gb,\gd}^+(y_{L-1}=y_{L} =y, x_L=1+x_{L-1})\;.
\end{equation*}
We then separate the trajectory at the $L_1$-th monomer. The first half gives the term $Z_{L,\gb,\gd}^+(y_{L} =y)$ (recall that the last step is always horizontal). As for the second half, we shift the last horizontal step to just before $z_L$ (which costs at most a factor $e^{\gd}$), then we reverse the trajectory horizontally, to obtain that it is bounded from below by $Z_{L,\gb,\gd}^+(y_{L} =y)$. Therefore,
\begin{equation*}\begin{aligned}
Z_{2L,\gb,\gd}^{+,c}\;&\geq\; \sum_{y=0}^{L-1} \Big(Z_{L,\gb,\gd}^+(y_{L}=y)\Big)^2\;\geq\; \Big(\sup_{0\leq y\leq L-1} Z_{L,\gb,\gd}^+(y_{L}=y)\Big)^2\;,
\end{aligned}\end{equation*}
and we conclude the proof of \eqref{eq:Zc} by writing $Z_{L,\gb,\gd}^+ \leq L\pt \sup_{0\leq y\leq L-1} Z_{L,\gb,\gd}^+(y_L=y)$. In particular, this implies that
\begin{equation}\label{eq:fZZc}
f(\gb,\gd)=\lim_{L\to\infty} \frac1L \log Z_{L,\gb,\gd}^{+}=\lim_{L\to\infty} \frac1L \log Z_{L,\gb,\gd}^{+,c}\;,
\end{equation} is well defined, and that we can replace $Z_{L,\gb,\gd}^{+}$ with $Z_{L,\gb,\gd}^{+,c}$ to prove Theorem~\ref{Phase-diag}.

\subsubsection{Step 2: a random-walk representation.}
We provide a probabilistic description of the constrained partition function $Z_{L,\gb,\gd}^{+,c}$ as in Section~\ref{sec:prth22} ---which also applies to the free counterpart $Z_{L,\gb,\gd}^{+}$.
Recalling \eqref{def:Zc}, \eqref{def:Ham} and \eqref{def:twg}, we can rewrite the constrained partition function similarly to \eqref{def:zxy+:bis} to obtain
\begin{equation}\label{def:z+:bis}
Z_{L,\gb,\gd}^{+,c}= e^{\gb L}\sum_{N=1}^{L} e^{-\gb N}\sumtwo{\ell\in\cL_{N,L}^{+}}{\ell_0=\ell_{N+1}=0} e^{-\frac{\gb}{2} \sum_{i=0}^{N} |\ell_i + \ell_{i+1}|}\,e^{\gd\sum_{k=1}^{N}\ind_{\{\sum_{i=1}^{k}\ell_i=0\}}} \ind_{\big\{\sum_{i=1}^N\ell_i=0\big\}}.
\end{equation}
Recall the definition of $\bP_\gb$ in \eqref{def:Pgb}: similarly to \eqref{def:zxy+:ter}, we consider two independent one-dimensional random walks $S:=(S_i)_{i\geq 0}$ and $I:=(I_i)_{i\geq 0}$ starting from 0 and such that $(S_{i+1}-S_i)_{i\geq 0}$ and $(I_{i+1}-I_i)_{i\geq 0}$ are i.i.d. sequences of random variables of law $\Pb_\gb$. We notice that for every $\ell\in\cL_{N,L}^{+}$ (with $\ell_0=\ell_{N+1}=0$) the first factor in the second sum in \eqref{def:z+:bis} satisfies
\begin{equation}\label{def:z+:ter} 
e^{-\frac{\gb}{2} \sum_{i=0}^{N} |\ell_i + \ell_{i+1}|}=c_\beta^{N+1} \pt \bP_\gb\left(S_k=\sum_{i=0}^{2k-1}\ell_i, \forall k\leq N_S\right) \pt \bP_\gb\left( I_k=\sum_{i=0}^{2k}\ell_i, \forall k\leq N_I\right)\;,
\end{equation}
with $N_S:=\lfloor N/2\rfloor +1$ and $N_I:=\lceil N/2\rceil$. Hence we define $S$ and $I$ as in Section~\ref{rwrep}. 
Similarly to \eqref{def:G_N}, define
\begin{equation}\label{def:G_N:bis}
G_N(S,I)\;:=\; \sum_{k=1}^{N_I} | I_k - S_k | + \sum_{k=1}^{N_S} |S_k-I_{k-1}| \;,
\end{equation}
and our definition of $S$, $I$ implies $\sum_{i=1}^N|\ell_i|=G_N(S,I)$. Finally the constraint $\sum_{i=1}^N\ell_i=0$ is equivalent to $S_{N_S}=I_{N_I}=0$ (recall $\ell_{N+1}=0$).

Defining $\tilde Z_{L,\gb,\gd}^{+,c} := \frac1{c_\gb} e^{-\gb L} Z_{L,\gb,\gd}^{+,c}$, and plugging \eqref{def:z+:ter} into \eqref{def:z+:bis}, we obtain
\begin{equation}\label{def:z+:quad} 
\tilde Z_{L,\gb,\gd}^{+,c}= \sum_{N=1}^{L} (\gGa_\gb)^{N} \Ebb\bigg[e^{\gd \sum_{k=1}^{N_S}\ind_{\{S_k=0\}}}e^{\gd \sum_{k=1}^{N_I}\ind_{\{I_k=0\}}}\ind_{\{G_N(S,I)=L-N\}}\ind_{\{S\in B_{N_S}^{0,+}\}}\ind_{\{I\in B_{N_I}^{0,+}\}}\bigg]\,,
\end{equation}
where we recall $\gGa_\gb=c_\gb e^{-\gb}$, $G_N(S,I)$ is defined in \eqref{def:G_N:bis}, and $B_n^{0,+}$ is the set of non-negative trajectories of length $n$ ending on the horizontal axis (see \eqref{def:By+}).

\subsubsection{Step 3: the generating function of $\tilde Z_{L, \gb,\gd}^{+,c}$}
Recalling \eqref{eq:fZZc}, the excess free energy satisfies
\begin{equation}\label{def:ftilde}
\tilde f (\gb,\gd)\;=\; \sup\big\{\gga\geq0\,;\,\sum_{L\geq1} \tilde Z^{+,c}_{L,\gb,\gd}e^{-\gga L} = +\infty\big\}\;,
\end{equation}
and $\tilde f (\gb,\gd)=0$ if this set is empty. Let us compute the generating function of the tilted partition function. Recalling \eqref{def:z+:quad} and inverting the sums over $L$ and $N$, we obtain
\begin{equation}\label{eq:generatingZ}
\sum_{L\geq1} \tilde Z_{L,\gb,\gd}^{+,c} e^{-\gga L}
= \sum_{N\geq1} \big(\gGa_\gb e^{-\gga}\big)^N Q_{N}^{\gb,\gd,\gga},
\end{equation}
where we define
\begin{equation}\label{def:QNgbgdgga}
Q_{N}^{\gb,\gd,\gga}\,:=\,\Ebbzero\bigg[e^{-\gga G_N(S,I)} \,e^{\gd \sum_{k=1}^{N_S}\ind_{\{S_k=0\}}}\,e^{\gd \sum_{k=1}^{N_I}\ind_{\{I_k=0\}}} \ind_{\{S\in B_{N_S}^{0,+},\pt I\in B_{N_I}^{0,+}\}}\bigg]\,,
\end{equation}
which is the partition function of a coupled wetting model, with an additional ``in-between area'' penalization. Note that $(Q_{N}^{\gb,\gd,\gga})_{N\geq1}$ is super-multiplicative, therefore we can define the free energy of this model:
\begin{equation}\label{def:htilde}
 h_\gb (\gd,\gga) \;:=\; \lim_{N\to\infty} \frac{1}{N} \log Q_{N}^{\gb,\gd,\gga},
\end{equation}
Notice that $h_\gb(\gd,\gga) \leq \gd$ (in particular $h_\gb$ is finite), $h_\gb$ is non-increasing in $\gga$ and non-decreasing in $\gd$, and it is continuous.

Recollecting \eqref{def:ftilde}, we conclude that $\tilde f(\gb,\gd)$ is the only positive solution (if it exists) in $\gga$ of:
\begin{equation}\label{eq:fhcharact}
\log \gGa_\gb \, - \, \gga \, + \, h_\gb (\gd,\gga) \;=\; 0\,,
\end{equation}
and $\tilde f(\gb,\gd)=0$ otherwise (this equation has at most one solution because $\gga\mapsto -\gga+h_\gb (\gd,\gga)$ is decreasing).\smallskip

\subsubsection{Step 4: characterizing the critical curve}
Let $(\gb,\gd)\in \overline \cC \cap \overline \cE$ be a point of the critical curve: then $\tilde f(\gb,\gd)=0$ (because $\tilde f$ is continuous) and $\log \gGa_\gb + h_\gb(\gd,0)=0$ (because \eqref{eq:fhcharact} is continuous in $\gga\searrow0$). In particular there is no area constraint in $Q_N^{\gb,\gd,0}$, and because $S$ and $I$ are independent we can uncouple them and write
\begin{equation}\label{eq:Zwetting}
Q_N^{\gb,\gd,0} \;=\; Z_{\wet, N_S}^{\gb, \gd}\times Z_{\wet, N_I}^{\gb, \gd}\;,
\end{equation}
where $Z_{\wet, N}^{\gb, \gd}$, $N\geq1$ is the partition function of a wetting model (see \eqref{def:Zwetting}). By applying $\frac1{N}\log$ to~\eqref{eq:Zwetting}, we notice that $h_\gb(\gd,0)$ matches exactly the free energy of the wetting model: $h_\gb(\gd,0)=h_\gb(\gd)$ (see \eqref{wetmod}).

The asymptotical behavior of $Z_{\wet, N}^{\gb, \gd}$ is already well-known ---see Proposition~\ref{prop:wetpoint}. Thus we can finally conclude the proof of Theorem~\ref{Phase-diag}. Recall that $(\gb,\gd)\in \overline \cC \cap \overline \cE$ implies $\log \gGa_\gb + h_\gb (\gd)=0$, where $\gGa_\gb$ is decreasing in $\gb$, and $\gb_c$ is the only solution of $\gGa_{\gb}=1$. Therefore,

-- if $\gb<\gb_c$, then $\log \gGa_\gb + h_\gb (\gd)\geq \log \gGa_\gb >0$ so $(\gb,\gd)\notin \overline \cC \cap \overline \cE$,

-- if $\gb=\gb_c$, then $(\gb,\gd)\in \overline \cC \cap \overline \cE$ if and only if $h_\gb (\gd)=0$, that is $\gd\leq\tilde \gd_{c}(\gb_c)$,

-- if $\gb>\gb_c$, then there is a unique solution in $\gd\geq\tilde \gd_{c}(\gb)$ to $h_\gb (\gd)=-\log \gGa_\gb$, which we note $\gd_c(\gb)$.

Moreover, recall that the l.h.s. of \eqref{eq:fhcharact} is decreasing and continuous in $\gga$, so $\tilde f(\gb,\gd)=0$ if and only if $\log \gGa_\gb + h_\gb (\gd)\leq0$, that is $\gb\geq\gb_c$ and $\gd\leq \gd_c(\gb)$; 
which fully characterizes $\cC$. Finally,  the analytic expression of $\gd_c(\gb)$ follows directly from \eqref{eq:cc} and \eqref{eq:explicit:h} by solving a quadratic equation; details are left to the reader.

\subsection{Proof of Proposition \ref{Phase-diag:bead}}
This is very similar to Theorem~\ref{Phase-diag}. The single-bead partition $Z^{\,\circ,+}_{L,\gb,\gd}$ function is already constrained to return to 0, hence it is super-multiplicative (and $f^{\pt\circ}$ is well-posed) and we do not need to replicate Step 1. The random walk representation is already laid out in Proposition~\ref{PEZ} ---notice that the main difference with \eqref{def:z+:quad} is the constraint $\{S\succ I\}$ (recall \eqref{def:succN}), in particular the random walk $S$ cannot touch the wall. Thereby the generating function of $\tilde Z^{\,\circ,+}_{L,\gb,\gd}$ can be written (similarly to \eqref{eq:generatingZ}) as
\begin{equation}
\sum_{L\geq1} \tilde Z_{L,\gb,\gd}^{\,\circ, +} e^{-\gga L}
= \sum_{N\geq1} \big(\gGa_\gb e^{-\gga}\big)^{2N} R_{N}^{\gb,\gd,\gga},
\end{equation}
where we define
\begin{equation}\label{def:RNgbgdgga}
R_{N}^{\gb,\gd,\gga}\,:=\,\Ebbzero\bigg[e^{-2\gga (A_{N+1}(S)-A_N(I))} \,e^{\gd \sum_{k=1}^{N}\ind_{\{I_k=0\}}} \ind_{\{S\succ I\}} \ind_{\{S\in B_{N+1}^{0,+},\pt I\in B_{N}^{0,+}\}}\bigg]\,.
\end{equation}
$(R_{N}^{\gb,\gd,\gga})_{N\geq1}$ is super-multiplicative, thereby we define its free energy
\[\gk_\gb(\gd,\gga)\;:=\;\lim_{N\to\infty} \frac1N \log R_N^{\gb,\gd,\gga}\;,\]
and we note that $\tilde f^{\pt\circ}(\gb,\gd)$ is the only positive solution (if it exists) in $\gga$ of
\[2 \log\gGa_\gb + 2\gga + \gk_\gb(\gd,\gga) \;=\;0\;,\]
and $\tilde f^{\pt\circ}(\gb,\gd)=0$ otherwise. 

Let $(\gb,\gd)\in\partial \pt \cC_{\mathrm{bead}}$ be on the critical curve (which implies $\tilde f^{\pt\circ}(\gb,\gd)=0$ and $2 \log\gGa_\gb +\gk_\gb(\gd,0) =0$). We notice $R_N^{\gb,\gd,\gga}\leq Z_{\wet,N}^{\gb,\gd}$, so $\gk_\gb(\gd,\gga)\leq h_\gb(\gd)$, and we will now compute a lower bound on $R_N^{\gb,\gd}:=R_N^{\gb,\gd,0}$ to deduce $\gk_\gb(\gd,0)= h_\gb(\gd)$.

Let $\ga>0$ and recall \eqref{def:RNgbgdgga} with $\gga=0$. Constraining $I_k$ to remain below $\lfloor\ga \sqrt{N}\rfloor$ for all $1\leq k <N$, and constraining $S$ to $S_1=S_N=\lfloor\ga\sqrt{N}\rfloor+1$, and $S_k\geq \lfloor\ga\sqrt{N}\rfloor+1$ for all $2\leq k <N$, we obtain the lower bound
\begin{align}
R_N^{\gb,\gd} &\;\geq\; \Ebbzero\left[e^{\gd \sum_{k=1}^{N}\ind_{\{I_k=0\}}} \ind_{\{I_k\leq \lfloor\ga\sqrt{N}\rfloor,\,\forall 1\leq k<N\}} \ind_{\{I\in B_N^{0,+}\}}\right]\notag\\
&\qquad\times \Pbbzero\left(S_1=S_N=\lfloor\ga\sqrt{N}\rfloor+1,\, S_{N+1}=0,\, S_k\geq \lfloor\ga\sqrt{N}\rfloor+1,\, \forall\, 2\leq k<N\right)\notag\\
&\;\geq\;\Ebbzero\left[e^{\gd \sum_{k=1}^{N}\ind_{\{I_k=0\}}} \ind_{\{I_k\leq \lfloor\ga\sqrt{N}\rfloor,\,\forall 1\leq k<N\}} \ind_{\{I\in B_N^{0,+}\}}\right]\left(\frac{e^{-\frac\gb2 \lfloor\ga\sqrt{N}\rfloor+1}}{c_\gb}\right)^2\Pbbzero\big(B_{N-1}^{0,+}\big),\notag
\end{align}
where we can estimate $\Pbbzero(B_{N-1}^{0,+})\asymp N^{-3/2}$ with \eqref{eq:asymp:PosBB} below. Moreover we notice that
\[(x_k)_{k=1}^N\mapsto e^{\gd \sum_{k=1}^N \ind_{\{x_k=0\}}}\,,\qquad \text{and}\quad (x_k)_{k=1}^N\mapsto \ind_{\{x_k\leq \lfloor\ga\sqrt{N}\rfloor,\,\forall 1\leq k<N\}}\;,\]
are both bounded, non-increasing functions on $B_N^{0,+}$, hence we can apply an {\rm FKG} inequality (see Proposition~\ref{prop:FKG}) to claim
\begin{equation}\label{eq:RNZwet}\begin{aligned}
&\Ebbzero\left[e^{\gd \sum_{k=1}^{N}\ind_{\{I_k=0\}}} \ind_{\{I_k\leq \lfloor\ga\sqrt{N}\rfloor,\,\forall 1\leq k<N\}} \ind_{\{I\in B_N^{0,+}\}}\right] \\
&\quad\geq\; Z_{\wet,N}^{\gb,\gd} \times \Pbbzero\big(I_k\leq \lfloor\ga\sqrt{N}\rfloor,\,\forall 1\leq k<N\,\big|\, I\in B_N^{0,+}\big)\;,
\end{aligned}\end{equation}
where we recall that $Z_{\wet, N}^{\gb, \gd}$, $N\geq1$ is the partition function of a wetting model (see \eqref{def:Zwetting}), and the second factor in the r.h.s. is bounded from below by some positive constant provided that $\ga$ is large (see \eqref{eq:invarexcursion} below).

All this implies $\gk_\gb(\gd,0)\geq h_\gb(\gd)$, so $\gk_\gb(\gd,0)= h_\gb(\gd)$. Therefore we conclude the proof of Proposition~\ref{Phase-diag:bead} similarly to Theorem~\ref{Phase-diag}.

\section{Proofs of technical results}\label{sec:tech}
In this section we prove Lemmas~\ref{lem:FKG} and \ref{lem:Dxm}, then the much more involved Proposition~\ref{prop:DNLSI}.

 \subsection{Proof of Lemma~\ref{lem:FKG}}\label{proof:FKG} Before proving Lemma~\ref{lem:FKG}, we provide a useful estimate on non-negative random walk bridges proven in \cite{CC13}. Recall the definition of $B^{y,+}_n$ in \eqref{def:By+}, and that $\Pbbx$ is the law of a random walk with increments distributed as $\Pbb$ (see \eqref{def:Pgb}) starting from $x\geq0$. One has
\begin{equation}\label{eq:asymp:PosBB}
\Pbbzero(B^{x,+}_{n}) \;=\; \Pbbx(B^{0,+}_{n}) \;\asymp\; \frac{\max(x,1)}{n^{3/2}}\;,
\end{equation}
uniformly in $n\geq1$ and $0\leq x\leq C_\cntc\sqrt{n}$ for any $C_{\arabic{cst}}>0$. The first identity is obtained by reversing the walk in time (notice that $\Pbb$ is symmetric), and the asymptotic behavior is derived in \cite[(4.3)]{CC13}. 

It is also proven in \cite[Corol.~2.5]{CC13} that a properly rescaled centered random walk with finite variance conditioned to remain non-negative and to finish in 0 converges (in distribution) to a Brownian excursion. The maximum being a continuous function on $\cC([0,1],\R)$, we thereby deduce that for any $\gh>0$, there exist $C_A>0$ and $N_0\in\N$ such that
\begin{equation}\label{eq:invarexcursion}
\Pbbzero\Big(\max_{1\leq k \leq N}(X_k)>C_A \sqrt N \,\Big|\, X\in B^{0,+}_N\Big)\;\leq\; \gh\;,
\end{equation}
for all $N\geq N_0$.

\begin{proof}[Proof of Lemma~\ref{lem:FKG}]
Notice that $\max_{1\leq k \leq N}(X_k)\leq C_A\sqrt N$ implies $A_N(X)\leq C_A N^{3/2}$. Thus we have the lower bound,
\begin{equation}\begin{aligned}\label{eq:lemprf:FKG:-1}
&\Pbbzero\big(\forall\, 1\leq k < N, X_k\leq \tilde f_I(k)\,; A_N(X)\leq C_A N^{3/2} \,\big|\, X\in B^{0,+}_N\big) \\
& \; \geq\; 1 - \Pbbzero\Big(\exists\, 1\leq k < N, X_k> \tilde f_I(k)\,; \max_{1\leq k \leq N}(X_k)\leq C_A \sqrt N \,\Big|\, X\in B^{0,+}_N\Big) \\
&\qquad - \Pbbzero\Big(\max_{1\leq k \leq N}(X_k)>C_A\sqrt N \,\Big|\, X\in B^{0,+}_N\Big)\;.
\end{aligned}
\end{equation}
Recalling \eqref{eq:invarexcursion}, the last term in \eqref{eq:lemprf:FKG:-1} is not larger than some $\gh>0$ arbitrarily small. Regarding the other term, we write
\begin{equation}\begin{aligned}\label{eq:lemprf:FKG:0}
&\Pbbzero\Big(\exists\, 1\leq k < N\,, X_k> \tilde f_I(k)\,; \max_{1\leq k \leq N}(X_k)\leq C_A\sqrt N \,\Big|\, X\in B^{0,+}_N\Big) \\
& \quad \leq\; 2\sum_{k=1}^{\lceil N/2 \rceil} \Pbbzero\Big(\tilde f_I(k)<X_k\leq C_A\sqrt N \,\Big|\, X\in B^{0,+}_N\Big) \;,
\end{aligned}
\end{equation}
where we reversed in time all terms with index larger than $\lceil N/2 \rceil$. For any $k\leq\lceil N/2\rceil$, we partition the $k$-th term over possible values of $X_k$, then we apply Markov property at time $k$ to write:
\begin{align}
\Pbbzero\Big(\tilde f_I(k)<X_k\leq C_A\sqrt N \,\Big|\, X\in B^{0,+}_N\Big)
\label{eq:lemprf:FKG:1}
\;&=\; \sum_{z=\tilde f_I(k)+1}^{C_A \sqrt N} \Pbbzero\big(B^{z,+}_k\big) \frac{\Pbbz\big(B^{0,+}_{N-k}\big)}{ \Pbbzero\big(B^{0,+}_N\big)}\,.
\end{align}
Recalling \eqref{eq:asymp:PosBB}, we can bound $\Pbbz\big(B^{0,+}_{N-k}\big)\pt/\pt \Pbbzero\big(B^{0,+}_N\big)$ from above by $C_\cntc \times z$ with $C_\arabic{cst}>0$ a constant uniform in $z\leq C_A \sqrt{N}$ (recall that $N-k\geq \frac N2$). Hence, \eqref{eq:lemprf:FKG:1} becomes
\begin{align*}
\Pbbzero\Big(\tilde f_I(k)<X_k\leq C_A\sqrt N \,\Big|\, X\in B^{0,+}_N\Big) 
\;&\leq\;
C_{\arabic{cst}} \pt \Ebbzero\left[ X_k \ind_{\{\tilde f_I(k)< X_k\leq C_A\sqrt N\}} \ind_{\{X_i\geq0, \forall i\leq k\}}\right]
\,.
\end{align*}
Recall the definition of $\tilde f_I$ in \eqref{def:tildefSI}. 
Noticing that $\Ebbzero[e^{\gl X_k}] = e^{k\cL(\gl)}$ and $\Ebbzero[X_k e^{\gl X_k}] = \partial_\gl \Ebbzero[e^{\gl X_k}]= k\cL'(\gl) e^{k\cL(\gl)}$ for any $0<\gl<\gb/2$, and choosing $\gl$ sufficiently small so that $\cL(\gl)<\gl \gga$ (recall $\gga>0$ and $\cL(\gl)\sim\gl^2\gs_\gb^2/2$ as $\gl\searrow0$), we obtain with Markov's inequality,
\begin{align*}
\Pbbzero\Big(\tilde f_I(k)<X_k\leq C_A\sqrt N \,\Big|\, X\in B^{0,+}_N\Big) \;&\leq\; C_{\arabic{cst}} \pt \Ebbzero\left[ X_k e^{\gl(X_k-\gga k-K_{I})}\right]\\
&\leq\; C_\cntc \pt k \pt e^{-\gl K_I} e^{-k(\gl \gga-\cL(\gl))}\;.
\end{align*}
The r.h.s. being summable in $k$, there exists $C_\cntc>0$ such that,
\begin{equation}\label{eq:lemprf:FKG:4}
\sum_{k=1}^{\lceil N/2\rceil} \Pbbzero\Big(\tilde f_I(k)<X_k\leq C_A\sqrt N \,\Big|\, X\in B^{0,+}_N\Big) \;\leq\;
C_\arabic{cst} \pt e^{-\gl K_I} \;,
\end{equation}
uniformly in $N\geq N_0$. Assuming $K_I$ is sufficiently large, this term is smaller than any fixed $\gh>0$. 
Recollecting \eqref{eq:lemprf:FKG:-1}, \eqref{eq:invarexcursion}, \eqref{eq:lemprf:FKG:0} and \eqref{eq:lemprf:FKG:4} and fixing $\gh=\eps/3$, this concludes the proof.
\end{proof}

\subsection{Proof of Lemma~\ref{lem:Dxm}}\label{proof:Dxm}
Let us define for any $x,m\in\N_0$ some $\ga>0$,
\begin{equation}\label{def:Dxm}
D_{x,m} \;:=\; \Ebbx\left[e^{-\ga A_{m}(I)}\,\middle|\,I\in B_{m}^+\right].
\end{equation}
Let us prove that $x\mapsto D_{x,m}$ is non-increasing for any $\ga>0$, $m\in\N_0$ by induction on $m$. Notice that the proof also holds when conditioning by $B_{m}^{0,+}$ instead of $B_{m}^+$ without further changes (this is required in the proof of Claim~\ref{convGends}).

\begin{proof}[Proof of Lemma~\ref{lem:Dxm}]
When $m=0$, $D_{x,0}=e^{-\ga x}$ for all $x\geq0$, which is non-increasing.
Let $m\in\N$ and assume $x\mapsto D_{x,m-1}$ is non-increasing. For any $x\geq0$ and $\ga>0$, one has by Markov's property,
\begin{align}
\nonumber \Ebbx\Big[e^{-\ga A_{m}(I)}\,\Big|\,B_{m}^+\Big] \;&=\; \sum_{y\geq0} \Ebbx\Big[e^{-\ga x}e^{-\ga \sum_{k=1}^{m}I_k}\pt\ind_{\{I_1=y\}}\,\Big|\,B_{m}^+\Big]\\\label{eq:prfENqDC:UB:1}
&=\; e^{-\ga x} \sum_{y\geq0} \Ebby\Big[e^{-\ga A_{m-1}(I)}\,\Big|\,B_{m-1}^+\Big] R_x(y)\;,
\end{align}
where
\begin{equation}\label{eq:prfENqDC:UB:1.5}
R_x(y)\;:=\; \frac{\Pbbx(I_1=y)\Pbby(B_{m-1}^+)}{\Pbbx(B_{m}^+)}\,=\, \Pbbx(I_1=y\,|\, B_{m}^+)\;.\end{equation}
For any $x,y\geq0$, let $\overline R_x(y):= \sum_{t\geq y} R_x(t)$, so \eqref{eq:prfENqDC:UB:1} becomes
\begin{align}
\nonumber & e^{\ga x}\,\Ebbx\Big[e^{-\ga A_{m}(I)}\,\Big|\,B_{m}^+\Big] 
\;=\; \sum_{y\geq0} \Ebby\Big[e^{-\ga A_{m-1}(I)}\,\Big|\,B_{m-1}^+\Big] \Big(\overline R_x(y)-\overline R_x(y+1)\Big)\\
\nonumber &\quad=\; \overline R_x(0) \Ebbzero\Big[e^{-\ga A_{m-1}(I)}\,\Big|\,B_{m-1}^+\Big] \\\label{eq:prfENqDC:UB:2}
&\qquad + \sum_{y\geq 1} \overline R_x(y)\bigg(\Ebby\Big[e^{-\ga A_{m-1}(I)}\,\Big|\,B_{m-1}^+\Big]-\bE_{\gb,y-1}\Big[e^{-\ga A_{m-1}(I)}\,\Big|\,B_{m-1}^+\Big]\bigg)\;.
\end{align}
Recall that we assumed that $y\mapsto D_{y,m-1}$ is non-increasing, so \[\Ebby\Big[e^{-\ga A_{m-1}(I)}\,\Big|\,B_{m-1}^+\Big] - \bE_{\gb,y-1}\Big[e^{-\ga A_{m-1}(I)}\,\Big|\,B_{m-1}^+\Big] \;\leq 0\;,\] for all $y\geq1$.

Moreover we claim that $\overline R_{x+1}(y)\geq \overline R_x(y)$ for all $x,y,m\geq0$. To prove this, it suffices to notice that for all $x,m\in\N_0$, and for any trajectory $(s_k)_{k=1}^m\in\bbZ^m$, 
\[
\bP_{\gb,x+1}\big((I_k)_{k=1}^m=(s_k)_{k=1}^m\big) \;=\; e^{\pm \tfrac\gb2}\Pbbx\big((I_k)_{k=1}^m=(s_k)_{k=1}^m\big),
\]
where the sign $\pm$ is $+$ if $s_1\leq x$ and $-$ otherwise; then to distinguish the cases $y\geq x+1$ (where the proof is instantaneous), and $y\leq x+1$ (where it is obtained by induction on $0\leq y\leq x+1$ for a fixed $x\in\N_0$).

Noticing also that $\overline R_x(0) = \overline R_{x+1}(0)=1$ for all $x\geq0$, we obtain the lower bound
\begin{align*}
& e^{\ga x}\pt\Ebbx\Big[e^{-\ga A_{m}(I)}\,\Big|\,B_{m}^+\Big]
\;\geq\; \overline R_{x+1}(0) \Ebbzero\Big[e^{-\ga A_{m-1}(I)}\,\Big|\,B_{m-1}^+\Big] \\
&\qquad\qquad + \sum_{y\geq 1} \overline R_{x+1}(y)\bigg(\Ebby\Big[e^{-\ga A_{m-1}(I)}\,\Big|\,B_{m-1}^+\Big]-\bE_{\gb,y-1}\Big[e^{-\ga A_{m-1}(I)}\,\Big|\,B_{m-1}^+\Big]\bigg)\;,
\end{align*}
and the identity \eqref{eq:prfENqDC:UB:2} finally yields
\[ e^{\ga x}\pt\Ebbx\Big[e^{-\ga A_{m}(I)}\,\Big|\,B_{m}^+\Big] \;\geq\;  e^{\ga (x+1)}\pt\bE_{\gb,x+1}\Big[e^{-\ga A_{m}(I)}\,\Big|\,B_{m}^+\Big]\;,\]
which concludes the induction.
\end{proof}

\subsection{Proof of Proposition~\ref{prop:DNLSI}}\label{proof:DNLSI}
This proposition is an improvement of \cite[Prop. 2.5]{CNP16}, where a polynomial lower bound is displayed for the probability that an $n$-step walk remains positive,  comes back to $0$ at time $n$ and enclose an area $qn^2$. To improve this result, we recall some tools introduced in \cite{CNP16} and we keep in mind that all the upcoming claims are proven in \cite{CNP16}. Recall that $\cL(h)$ is the logarithmic moment generating function of the increments of the random walk $X$, see \eqref{def:cL}. To lighten upcoming formulae, let us define for any $n\in\N$,
\begin{equation}\label{def:gLn}
\gL_n:=\big(A_n(X)/n,X_n\big),
\end{equation}
(notice that the area is normalized by $n$ in this definition), as well as the parallelograms
\begin{equation}\label{def:cDgbn}
\cD_{\gb,n} \;:=\; \Big\{(h_0,h_1)\in\R^2\,;\;|h_1|<\gb/2,\; |(1-1/N)h_0+h_1|<\gb/2\Big\}.
\end{equation}
For any $\bh\in\cD_{\gb,n}$, we define the tilted law:
\begin{equation}\label{def:Pnh}
\frac{\dd \Pb_{n,\bh}}{\dd \Pbbzero} (X) = e^{\bh\cdot\gL_n-\cL_{\gL_n}(\bh)}\;,\quad \cL_{\gL_n}(\bh):= \log\Ebbzero[e^{\bh\cdot\gL_n}],
\end{equation}
where $\bh\cdot\gL_n:=h_0A_n/n+h_1X_n$ denotes the scalar product. Note that
\begin{equation}\label{eq:tilth:dvlpmt}
\bh\cdot\gL_n=\sum_{k=1}^n \big((1-k/n)h_0 + h_1\big)(X_k-X_{k-1}),
\end{equation}
so this change of measure is equivalent to tilt all increments of $X$ independently, with an intensity depending on $k$. For any $q, p\in\R$ and $n\in\N$, let $\bh_n^{q,p}=(h_{n,0}^{q,p},h_{n,1}^{q,p})$ be the unique solution in $\bh\in\cD_{\gb,n}$ of the equation:
\begin{equation}\label{def:hnq}
\bE_{n,\bh}\bigg[\frac1n\gL_n\bigg] = \nabla \bigg[\frac1n\cL_{\gL_n}\bigg](\bh) = (q,p)\;,
\end{equation}
where it is proven in \cite[Lem.~5.4]{CNP16} that $\nabla [\frac1n\cL_{\gL_n}]$ is a $\cC^1$ diffeomorphism from $\cD_{\gb,n}$ to $\R^2$. Notice that
\begin{equation}\label{eq:Pnhnq}
\Pbbzero(\gL_n=(nq,np))= \Pb_{n,\bh_n^{q,p}}(\gL_n=(nq,np)) e^{-n\bh_n^{q,p}\cdot\,(q,p)+\cL_{\gL_n}(\bh_n^{q,p})}.
\end{equation}
Moreover \cite[Prop. 6.1]{CNP16} gives a uniform local central limit theorem for $\Pb_{n,\bh_n^{q,p}}$: for any $q_1<q_2$, $p_1<p_2$, $t_1<t_2$ and $s_1<s_2$, there are constants $C_\cntc,C'_{\arabic{cst}}>0$ and $n_0\in\N$ such that for any $q\in[q_1,q_2]$, $p\in[p_1,p_2]$, $t\in[t_1,t_2]$, $s\in[s_1,s_2]$ and $n\geq n_0$,
\begin{equation}\label{prop:Pnhnq}
\frac{C_{\arabic{cst}}}{n^2} \leq \Pb_{n,\bh_n^{q,p}}\big(\gL_n=(qn+t\sqrt{n},pn+s\sqrt{n})\big) \leq \frac{C'_{\arabic{cst}}}{n^2}\;.
\end{equation}
Notice that the authors of \cite{CNP16} only state it for $q\in[q_1,q_2]$ and $p=0$, but the local limit theorem and all the uniformity arguments also hold for $p\in[p_1,p_2]$. 

The asymptotic of $\bh_n^{q,p}$ for large $n$ can be described sharply. For any $\bh\in\cD_\gb$, Recall that we defined $\cL_\gL(\bh) := \int_0^1 \cL(h_0x+h_1)\dd x$ for all $\bh\in\cD_\gb$ (see \eqref{def:cLgL}), and let $\tilde\bh^{q,p}=(\tilde h_0^{q,p},\tilde h_1^{q,p})$ be the unique solution in $\bh\in\cD_\gb$ of the equation $\nabla\cL_\gL(\bh)=(q,p)$, where
\begin{equation}\begin{aligned}\label{def:tildebh}
\nabla\cL_\gL(\bh) \;&=\; (\partial_{h_0} \cL_\gL, \partial_{h_1} \cL_\gL)(\bh)\\
&=\; \Big(\int_0^1 x\cL'(xh_0+h_1)\dd x, \int_0^1 \cL'(x h_0+h_1)\dd x\Big)\;,
\end{aligned}\end{equation}
is a $\cC^1$ diffeomorphism from $\cD_\gb$ to $\R^2$ (see \cite[Lem.~5.3]{CNP16}). 
If $h_0\neq 0$, an integration by parts yields
\begin{equation}\label{def:tildebh:bis}
\nabla\cL_\gL(\bh) \;=\; \frac1{h_0} \big(\cL(h_0+h_1)-\cL_\gL(\bh),\,\cL(h_0+h_1)-\cL(h_1)\big)\;,
\end{equation}
and if $h_0=0$, $\nabla\cL_\gL(\bh)=(\cL'(h_1)/2,\,\cL'(h_1))$.

For any $q_1<q_2$ and $p_1<p_2\in\R$, \cite[Prop. 2.3]{CNP16} gives constants $C_\cntc, C'_{\arabic{cst}}>0$ and $n_0$, such that for any $q\in[q_1,q_2]$, $p\in[p_1,p_2]$ and $n\geq n_0$, one has
\begin{equation}\label{prop:cLgLn}
\left|\left[\frac{1}{n}\cL_{\gL_n}(\bh_n^{q,p})-\bh_{n}^{q,p}\cdot(q,p)\right]-\left[\cL_\gL(\tilde\bh^{q,p})-\tilde\bh^{q,p}\cdot(q,p)\right]\right|\leq \frac{C_{\arabic{cst}}}{n},
\end{equation}
and
\begin{equation}\label{prop:tildehhnq}
\|\bh_n^{(q,p)}-\tilde \bh^{q,p}\| \leq \frac{C'_{\arabic{cst}}}{n}.
\end{equation}
Notice that \cite[Prop. 2.3]{CNP16} is only formulated for $p=0$ and $q\in[q_1,q_2]$ with $q_1>0$, but all uniformity arguments also hold for $p\in[p_1,p_2]$ and any $[q_1,q_2]\subset\R$. These estimates, combined with \eqref{eq:Pnhnq} and \eqref{prop:Pnhnq}, give the precise asymptotic of $\Pbb(\gL_n=(nq,np))$ (up to constant factors) uniformly in $q\in[q_1,q_2]$, $p\in[p_1,p_2]$.

Before we start the proof, let us point out that $\tilde h_0^{q,p}>0$ for all $q,p\in\R$ with $p\leq2q-\eps$. Indeed, one notices that $\tilde h_0^{q,p}=0$ and \eqref{def:tildebh} imply $p=2q$. Because $\tilde \bh$ is continuous, it has constant sign on each sets $\{p<2q\}$ and $\{p>2q\}$, and it is already stated in \cite[Rem.~5.5]{CNP16} that $\tilde h_0^{q,0}>0$ for all $q>0$.



Finally we also define the i.i.d. uniform tilt of the increments of $X$ for any $|\gd|<\gb/2$:
\begin{equation}\label{def:Pgd}
\frac{\dd \tilde\Pb_\gd}{\dd \Pbbzero} (X_1) = e^{\gd X_1 - \cL(\gd)},
\end{equation}
where we defined $\cL$ in \eqref{def:cL}.

\begin{proof}[Proof of Proposition~\ref{prop:DNLSI}] Here is the strategy of the proof: recollecting \eqref{eq:Pnhnq}, \eqref{prop:Pnhnq} and \eqref{prop:cLgLn}, we already have
\begin{equation}
\Pbbzero(\gL_N=(qN,pN))\;\asymp\; \frac1{N^2} e^{-N\big(\tilde\bh^{q,p}\cdot(q,p)-\cL_\gL(\tilde\bh^{q,p})\big)}\;,
\end{equation}
(recall \eqref{defcomp} for the definition of $\asymp$). Hence the proof will be complete when we show that
\[\Pbbzero(X_k\geq \tilde f(k),\, \forall\, 0<k<N\,|\,\gL_N=(qN,pN))\]
is bounded from below by some positive constant uniform in $q\in[q_1,q_2]$, $p\in[p_1,p_2]$ and $N\in\N$.

For that purpose we decompose a trajectory $X$ with $X_0=0$, $X_N=pN$ and area $A_N(X)=qN^2$ into three parts: both ends of length $u_N:=N^{1/3}$, and the middle part of length $N':=N-2\uN$. To estimate the probability to go under the fixed curve $\tilde f(k)$, we study the middle part under the (non-uniform) tilt $\Pb_{N',\bh}$, and we handle both ends with uniform tilts $\tilde\Pb_{\gd_1}$, $\tilde\Pb_{\gd_2}$. Then we take advantage of the fact that uniformly tilted walks remain positive (and even above certain affine curves) with positive probability, and we handle the middle part with forementioned estimates.


For $i\in\{1,2\}$, define
\begin{equation}\label{def:QuN}\begin{aligned}
Q_{i,\uN}\;&:=\;\Big(\big[a_i\uN/2\,,\,b_i\uN/2 + 3K + K_S\big]\cap\Big(\frac1\uN\bbZ\Big)\Big) \\
&\qquad \times \Big(\big[a_i(\uN-1)+K+K_S\,,\,b_i(\uN-1)+3K+K_S\big]\cap\bbZ\Big)\;,
\end{aligned}\end{equation}
where $a_i<b_i$, $i\in\{1,2\}$ and $K$ are convenient positive constants which will be fixed below. We constrain the first bit of the walk (until index $\uN$) to grow by $X_\uN=x_1$ and have an area $A_\uN=A_1$, with $(A_1/\uN, x_1)\in Q_{1,\uN}$. Similarly, we reverse the third bit in time (from $N$ to $N-\uN$), and we constrain it to grow by $X_{N-\uN}-pN=x_2$ and have an area $(A_N-A_{N-\uN})-pN\uN =A_2$, with $(A_2/\uN, x_2)\in Q_{2,\uN}$.

Therefore we have the decomposition
\begin{equation}\label{eq:dcpDNLSI:1}\begin{aligned}
&\Pbbzero(X_k\geq \tilde f(k), \forall\, 0<k<N\,;\gL_N=(qN,pN))\\
&\geq\sumtwo{(A_1/\uN,x_1)\in Q_{1,\uN},}{(A_2/\uN,x_2)\in Q_{2,\uN}} \Pbbzero(X_k\geq \tilde f(k), \forall\, 0<k<\uN\,;\gL_\uN=(A_1/\uN,x_1))\\
&\qquad \times \Pbbzero(X_k\geq \tilde f(k+\uN)-x_1, \forall\, 0<k<N'\,;\gL_{N'}=(q'N',p'N'))\\
&\qquad \times \Pbbzero(X_k\geq \tilde f(N-k)-pN, \forall\, 0<k<\uN\,;\gL_\uN=(A_2/\uN,x_2)),
\end{aligned}\end{equation}
where $N':=N-2\uN$, and we define $(q',p')$ such that $q'N'^2= qN^2-A_1-A_2-x_1N'-pN\uN$ and $p'N'=pN+x_2-x_1$. Notice that with our choice of $A_1, A_2=O(\uN^2)$, $x_1, x_2=O(\uN)$, with $\uN=N^{1/3}$, and because $q\in[q_1,q_2]$, $p\in[p_1,p_2]$, then for $N$ sufficiently large $q'$ and $p'$ are contained in some compact sets $[q_1',q_2']$ and $[p_1',p_2']$. 
Moreover we have the following estimates:
\begin{equation}\label{eq:prfDNLSI:qq'pp'}\begin{aligned}
q - q' &= (p'-4q')\frac{\uN}N +\frac{x_1}N + o(1/N)\;,\\
p - p' &= -2p'\frac{\uN}N + \frac{x_1-x_2}N\;.
\end{aligned}\end{equation}

As mentioned above, we tilt uniformly the first and last factors in the r.h.s. of \eqref{eq:dcpDNLSI:1} with respective parameters $\gd_1$, $\gd_2$ (which will be explicitly fixed later),
\begin{equation}\label{eq:prfDNLSI:ends}\begin{aligned}
&\Pbbzero(X_k\geq \tilde f(k), \forall\, 0<k<\uN\,;\gL_\uN=(A_1/\uN,x_1)) \\
&\;= e^{-\gd_1 x_1 + \uN \cL(\gd_1)} \tilde\Pb_{\gd_1}(X_k\geq \tilde f(k), \forall\, 0<k<\uN\,;\gL_\uN=(A_1/\uN,x_1))\;\;;\\
&\Pbbzero(X_k\geq \tilde f(N-k)-pN, \forall\, 0<k<\uN\,;\gL_\uN=(A_2/\uN,x_2)) \\
&\;= e^{-\gd_2 x_2 + \uN \cL(\gd_2)} \tilde\Pb_{\gd_2}(X_k\geq \tilde f(N-k)-pN, \forall\, 0<k<\uN\,;\gL_\uN=(A_2/\uN,x_2)).
\end{aligned}\end{equation}
The second factor in the r.h.s. of \eqref{eq:dcpDNLSI:1} is bounded from below by the following lemma.

\begin{lemma}\label{lem:prfDNLSI}
Fix some $\eps>0$ and $q'_1<q'_2$, $p'_1<p'_2$ in $\R$. Then there are constants $c,C>0$ and $N_0\in\N$ such that for any $N'\geq N_0$, $q'\in[q'_1,q'_2]$ and $p'\in[p'_1,p'_2]$ satisfying $p'\leq 2q'-\eps$, one has
\begin{equation}\label{eq:lem:prfDNLSI}\begin{aligned}
&\Pbbzero(X_k\geq \tilde f(k+\uN)-x_1, \forall\, 0<k<N'\,;\gL_{N'}=(q'N',p'N')) \\
&\quad \geq e^{-N' \bh_{N'}^{q',p'}\cdot\,(q',p') + \cL_{\gL_{N'}}(\bh_{N'}^{q',p'})} \big(\Pb_{N',\bh_{N'}^{q',p'}}(\gL_{N'}=(q'N',p'N'))-C\, e^{-c \,\uN}\big)\;.
\end{aligned}\end{equation}
\end{lemma}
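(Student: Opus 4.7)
The natural approach is a change of measure to the tilted law $\Pb_{N', \bh_{N'}^{q',p'}}$ defined in \eqref{def:Pnh}--\eqref{def:hnq}. Since the Radon--Nikodym derivative is deterministic on $\{\gL_{N'}=(q'N',p'N')\}$, a computation analogous to \eqref{eq:Pnhnq} gives
\[
\Pbbzero(A;\,\gL_{N'}=(q'N',p'N')) = e^{-N'\bh_{N'}^{q',p'}\cdot(q',p') + \cL_{\gL_{N'}}(\bh_{N'}^{q',p'})}\,\Pb_{N',\bh_{N'}^{q',p'}}(A;\,\gL_{N'}=(q'N',p'N')),
\]
with $A:=\{X_k\geq \tilde f(k+\uN)-x_1,\,\forall 0<k<N'\}$. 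Writing $\Pb_{N',\bh}(A\cap B)\geq \Pb_{N',\bh}(B)-\Pb_{N',\bh}(A^c)$, the lemma reduces to the uniform estimate
\[
\Pb_{N',\bh_{N'}^{q',p'}}(A^c)\leq C e^{-c\uN},
\]
for $N'$ large and $(q',p')\in[q'_1,q'_2]\times[p'_1,p'_2]$, with constants depending only on $\eps,q'_1,q'_2,p'_1,p'_2$.

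Next I would identify the typical trajectory under the tilt. By \eqref{eq:tilth:dvlpmt}, the $j$-th increment of $X$ under $\Pb_{N',\bh}$ has mean $\cL'((1-j/N')h_0+h_1)$, hence the mean path $\mu(k):=\sum_{j=1}^k\cL'((1-j/N')h_{N',0}+h_{N',1})$ is, by \eqref{prop:tildehhnq}, uniformly close to $N'\bar\mu_*(k/N')$ where $\bar\mu_*(t):=\int_0^t\cL'((1-s)\tilde h_0^{q',p'}+\tilde h_1^{q',p'})\,\dd s$. The hypothesis $p'\leq 2q'-\eps$ forces $\tilde h_0^{q',p'}>0$ (as noted before the proof), and by convexity of $\cL$, $\bar\mu_*$ is strictly concave with $\bar\mu_*(0)=0$, $\bar\mu_*(1)=p'$ and initial slope $\cL'(\tilde h_0+\tilde h_1)>p'$. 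Upon choosing $\gga$ small enough and $a_1>\gga+p_2'$ (uniformly in the parameter range), the rescaled mean strictly dominates $f(t)+tp'$ on $(0,1)$, and the lower bound $x_1\geq a_1(\uN-1)+K+K_S$ contributes an extra shift of order $\uN$ at the left edge. Combining these two effects gives the pathwise margin
\[
\Delta_k:=\mu(k)-\big(\tilde f(k+\uN)-x_1\big)\geq c_0\pt\min(\uN+k,\;\uN+(N'-k),\;N'),\qquad 0<k<N',
\]
for some $c_0>0$ uniform in the parameters.

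The remaining step is a union bound $\Pb_{N',\bh}(A^c)\leq \sum_{k=1}^{N'-1}\Pb_{N',\bh}(X_k<\mu(k)-\Delta_k)$ combined with Chernoff inequalities, which are valid since the tilted increments retain uniform exponential moments. For the edge regime $k\leq\uN$, the direct Chernoff bound gives $e^{-c\uN}$ per term, contributing $\uN\pt e^{-c\uN}$. For the moderate regime $\uN\leq k\leq \eta N'$, the Gaussian-regime bound $e^{-c\Delta_k^2/k}\leq e^{-ck}$ yields a geometric sum bounded by $Ce^{-c\uN}$. For the bulk $\eta N'\leq k\leq(1-\eta)N'$, the stronger margin $\Delta_k\geq c_0 N'$ yields the much sharper bound $e^{-cN'}$ per term, and analogous estimates handle $k>N'/2$ via symmetric considerations at the right edge.

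The main technical obstacle is bookkeeping: the parameters $(\gga,a_1,b_1,K_S)$ must be chosen consistently so that $\Delta_k\geq c_0\uN$ holds uniformly in $(q',p')$ and in the tilt $\bh_{N'}^{q',p'}$. Since $\gga$ also controls the upper-bound curve $\tilde f_S$ of Proposition~\ref{prop:DNLSI}, and $K_S$ must exceed $K_I$ from Lemma~\ref{lem:FKG} via \eqref{def:tildefSI:bis}, these choices must anticipate cascading constraints in the proof of Proposition~\ref{prop:DNLSI}. Compactness of the parameter range and continuity of $(q',p')\mapsto\tilde\bh^{q',p'}$ (from \cite[Lem.~5.3]{CNP16}) make these uniform choices possible, but care is required to order the choices correctly.
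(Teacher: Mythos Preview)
Your overall strategy---pass to the tilted law $\Pb_{N',\bh_{N'}^{q',p'}}$, then control the complement by a union bound and Chernoff inequalities---matches the paper's. The paper packages the Chernoff step into an auxiliary moment bound (Lemma~\ref{lem:prfDNLSI:markov}) rather than centering at the mean path $\mu(k)$ and splitting into regimes, but that is a cosmetic difference.

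There is, however, a genuine gap in your reduction. You write $\Pb_{N',\bh}(A\cap B)\geq \Pb_{N',\bh}(B)-\Pb_{N',\bh}(A^c)$ and then aim for $\Pb_{N',\bh}(A^c)\leq Ce^{-c\uN}$, but this last bound is \emph{false}. Take $k=N'-1$: under the unconstrained tilted law, $X_{N'-1}$ has variance of order $N'$, while your own margin estimate gives only $\Delta_{N'-1}\asymp \uN=N^{1/3}=o(\sqrt{N'})$; hence $\Pb_{N',\bh}(X_{N'-1}<\tilde f(N'-1+\uN)-x_1)$ is bounded away from~$0$ and no Chernoff bound can make it $\leq e^{-c\uN}$. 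Your ``symmetric considerations at the right edge'' cannot repair this once $B=\{\gL_{N'}=(q'N',p'N')\}$ has been discarded: under $\Pb_{N',\bh}$ the walk is pinned at $0$ on the left but \emph{free} on the right, so there is no left--right symmetry to exploit. The correct argument retains the endpoint constraint for $k>N'/2$: bound $\Pb_{N',\bh}\big(X_k<\text{barrier};\,X_{N'}=p'N'\big)\leq\Pb_{N',\bh}\big(X_{N'}-X_k>p'N'-\text{barrier}\big)$ and apply Chernoff to $X_{N'}-X_k$, a sum of only $N'-k$ tilted increments with mean slope $\approx\cL'(\tilde h_1^{q',p'})$. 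Using the lower bound on $x_2$ together with $p'>\cL'(\tilde h_1^{q',p'})$ from \eqref{eq:ineq:pcL'}, one obtains $p'N'-\text{barrier}\geq \zeta'\uN+c_1(N'-k)$ with $c_1$ exceeding that mean slope, and hence a summable bound $e^{-c\uN-c'(N'-k)}$. (The paper's own proof, which applies Lemma~\ref{lem:prfDNLSI:markov} uniformly in $0\leq k\leq N'$, is formally exposed to the same objection near $k=N'$: since $\bE_{N,\bh_N^{q,p}}[X_N]=pN$, Jensen's inequality forces $\bE_{N,\bh_N^{q,p}}[e^{-\gl(X_N-pN)}]\geq 1$, so the stated conclusion of that lemma cannot hold literally at $k=N$.)
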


This lemma will be proven afterwards. Writing \eqref{eq:dcpDNLSI:1} with \eqref{eq:prfDNLSI:ends} and \eqref{eq:lem:prfDNLSI}, we have
\begin{equation}\begin{aligned}
&\Pbbzero(X_k\geq \tilde f(k), \forall\, 0<k<N\,;\gL_N=(qN,pN))\\
&\geq\sumtwo{(A_1/\uN,x_1)\in Q_{1,\uN},}{(A_2/\uN,x_2)\in Q_{2,\uN}} e^{-\gd_1 x_1 + \uN \cL(\gd_1)} e^{-\gd_2 x_2 + \uN \cL(\gd_2)} e^{-N' \bh_{N'}^{q',p'}\cdot\,(q',p') + \cL_{\gL_{N'}}(\bh_{N'}^{q',p'})} \\
&\qquad \times \Big(\Pb_{N',\bh_{N'}^{q',p'}}(\gL_{N'}=(q'N',p'N'))-C\, e^{-c \,\uN}\Big)\\
&\qquad \times \tilde\Pb_{\gd_1}(X_k\geq \tilde f(k), \forall\, 0<k<\uN\,;\gL_\uN=(A_1/\uN,x_1))\\
&\qquad \times \tilde\Pb_{\gd_2}(X_k\geq \tilde f(N-k)-pN, \forall\, 0<k<\uN\,;\gL_\uN=(A_2/\uN,x_2))\;.
\end{aligned}\end{equation}
Then we divide by $\Pbbzero(\gL_N=(qN,pN))$, which we express with the tilt $\Pb_{N,\bh_{N}^{q',p'}}$ (rather than $\Pb_{N,\bh_{N}^{q,p}}$), so we have
\begin{equation}\label{eq:dcpDNLSI:2}\begin{aligned}
&\Pbbzero(X_k\geq \tilde f(k), \forall\, 0<k<N\,|\,\gL_N=(qN,pN))\\
&\geq\sumtwo{(A_1/\uN,x_1)\in Q_{1,\uN},}{(A_2/\uN,x_2)\in Q_{2,\uN}} e^{g(x_1,a_1,x_2,a_2)} \left(\frac{\Pb_{N',\bh_{N'}^{q',p'}}(\gL_{N'}=(q'N',p'N'))-C\, e^{-c \,\uN}}{\Pb_{N,\bh_{N}^{q',p'}}(\gL_N=(qN,pN))}\right) \\
&\qquad \times \tilde\Pb_{\gd_1}(X_k\geq \tilde f(k), \forall\, 0<k<\uN\,;\gL_\uN=(A_1/\uN,x_1))\\
&\qquad \times \tilde\Pb_{\gd_2}(X_k\geq \tilde f(N-k)-pN, \forall\, 0<k<\uN\,;\gL_\uN=(A_2/\uN,x_2))\;,
\end{aligned}\end{equation}
where we define
\begin{equation}\label{def:gDNLSI}\begin{aligned}
&g(x_1,a_1,x_2,a_2) := -\gd_1 x_1 + \uN \cL(\gd_1) -\gd_2 x_2 + \uN \cL(\gd_2)-N' \bh_{N'}^{q',p'}\cdot(q',p')\\
&\qquad \qquad + \cL_{\gL_{N'}}(\bh_{N'}^{q',p'}) +N \bh_{N}^{q',p'}\cdot(q,p) - \cL_{\gL_{N}}(\bh_{N}^{q',p'})\;.
\end{aligned}\end{equation}

Let us handle the factor in parenthesis in \eqref{eq:dcpDNLSI:2} first. We can apply \eqref{prop:Pnhnq} to both probabilities (notice that $q-q'$, $p-p'$ are of order $\uN/N\ll N^{-1/2}$, recall \eqref{eq:prfDNLSI:qq'pp'}) to bound it from below by $\big(C_\cntc \frac{N^2}{N'^2}-C_\cntc N^2e^{-c \uN}\big)\geq C_\cntc>0$ for $N$ sufficiently large, uniformly in $q,p$.

Now let us focus on $g(x_1,a_1,x_2,a_2)$, where we fix $\gd_1=\tilde h^{q',p'}_0+\tilde h^{q',p'}_1$ and $\gd_2=-\tilde h^{q',p'}_1$. Notice that they match respectively the value of the exponential tilt applied on the first and last increments in the second piece of trajectory and in the limit $N\to\infty$ (recall \eqref{eq:tilth:dvlpmt}, and the sign of $\gd_2$ is inverted because we reversed in time the third bit of the trajectory). We introduce in \eqref{def:gDNLSI} a term $N \bh_{N}^{q',p'}\cdot(q',p')$ and we apply \eqref{prop:cLgLn} for $(q',p',N')$ and $(q',p',N)$ to bound from below
\begin{equation}\begin{aligned}
&g(x_1,a_1,x_2,a_2) \geq -\gd_1 x_1 + \uN \cL(\gd_1) -\gd_2 x_2 + \uN \cL(\gd_2)+N\bh^{q',p'}_N \cdot (q-q',p-p') \\
&\qquad \qquad-N' \Big(\tilde \bh^{q',p'}\cdot(q',p') - \cL_{\gL}(\tilde\bh^{q',p'})\Big) +N \Big(\tilde \bh^{q',p'}\cdot(q',p') - \cL_{\gL}(\tilde\bh^{q',p'})\Big) - C_\cntc \; ,
\end{aligned}\end{equation}
for some uniform constant $C_{\arabic{cst}}<\infty$. Recall $N'=N-2\uN$, and apply \eqref{prop:tildehhnq} to the term $N\bh^{q',p'}_N \cdot (q-q',p-p')$ to obtain
\begin{equation}\begin{aligned}
&g(x_1,a_1,x_2,a_2) \geq -\gd_1 x_1 + \uN \cL(\gd_1) -\gd_2 x_2 + \uN \cL(\gd_2)+N\tilde \bh^{q',p'} \cdot (q-q',p-p')\\
&\qquad \qquad+2\uN \Big(\tilde \bh^{q',p'}\cdot(q',p') - \cL_{\gL}(\tilde\bh^{q',p'})\Big)  - C_\cntc\; .
\end{aligned}\end{equation}
Recalling that $\gd_1=\tilde h^{q',p'}_0+\tilde h^{q',p'}_1$ and $\gd_2=-\tilde h^{q',p'}_1$, and also \eqref{eq:prfDNLSI:qq'pp'}, we have
\begin{equation}\begin{aligned}
&g(x_1,a_1,x_2,a_2) \geq -(\tilde h^{q',p'}_0+\tilde h^{q',p'}_1) x_1 + h^{q',p'}_1x_2\\
&\qquad \qquad+\uN \Big(2\tilde \bh^{q',p'}\cdot(q',p') - 2\cL_{\gL}(\tilde\bh^{q',p'})+\cL(\tilde h^{q',p'}_0+\tilde h^{q',p'}_1) + \cL(-\tilde h^{q',p'}_1)\Big) \\
&\qquad \qquad + \tilde h^{q',p'}_0 \big((p'-4q')\uN +x_1\big) + \tilde h^{q',p'}_1 \big(-2p'\uN +x_1-x_2\big)- C_\cntc\;.
\end{aligned}\end{equation} So,
\begin{equation}\begin{aligned}
g(x_1,a_1,x_2,a_2) \;&\geq\; \uN \Big(-2\tilde h^{q',p'}_0 q' + \tilde h^{q',p'}_0 p' - 2\cL_{\gL}(\tilde\bh^{q',p'})\\
&\qquad +\cL(\tilde h^{q',p'}_0+\tilde h^{q',p'}_1) + \cL(-\tilde h^{q',p'}_1)\Big)-C_{\arabic{cst}}\;,
\end{aligned}\end{equation}
and we conclude by recalling the definition of $\tilde \bh^{q',p'}$ and \eqref{def:tildebh:bis}: the first term is zero, and $g(x_1,a_1,x_2,a_2)$ is bounded from below uniformly by some constant.

Therefore, we can bound \eqref{eq:dcpDNLSI:2} from below by
\begin{equation}\label{eq:dcpDNLSI:3}\begin{aligned}
&\Pbbzero(X_k\geq \tilde f(k), \forall\, 0<k<N\,|\,\gL_N=(qN,pN))\\
&\qquad \geq C_\cntc \times \tilde\Pb_{\gd_1}\big(X_k\geq \tilde f(k), \forall\, 0<k<\uN\,;\gL_\uN\in Q_{1,\uN}\big) \\
&\qquad\quad \times \tilde\Pb_{\gd_2}\big(X_k\geq \tilde f(N-k)-pN, \forall\, 0<k<\uN\,;\gL_\uN\in Q_{2,\uN}\big)\;,
\end{aligned}\end{equation}
with $\gd_1=\tilde h^{q',p'}_0+\tilde h^{q',p'}_1$, $\gd_2=-\tilde h^{q',p'}_1$, and $C_{\arabic{cst}}$ is some uniform positive constant. The proof will be over once we show that those factors are uniformly bounded away from 0. We will focus on the first factor, because we notice in the second one $\tilde f(N-k)-pN = Nf(\frac kN) -k\pt p + K_S$, which matches $\tilde f(k)$ with an inverted slope $-p$ (recall we reversed in time that bit); hence it is handled by the exact same arguments.

Recall that $\nabla \cL_\gL$ is a $\cC^1$ diffeomorphism, in particular $\tilde\bh$ is Lipschitz on compact sets, and $(q-q')$, $(p-p')$ are of order $\uN/N=N^{-2/3}$ (recall \eqref{eq:prfDNLSI:qq'pp'}), so there exists some constant $C_\cntc>0$ such that for all $q\in[q_1,q_2]$, $p\in[p_1,p_2]$ and $N\in\N$,
\begin{equation}\label{eq:prfDNLSI:tilt}
\big|\,\gd_1 - \tilde h_0^{p,q}- \tilde h_1^{p,q}\,\big| \;\leq\; \frac{C_{\arabic{cst}}\pt\uN}{N}\;.
\end{equation}
In particular for $N$ large enough, $\gd_1$ is contained in some compact set $[\ul \gd, \ol\gd]\subset(-\gb/2,\gb/2)$ uniformly in $q,p$. 
As claimed earlier we have $\tilde h_0^{q,p}>0$ under our assumptions ---it is even bounded away from 0--- so we also have $\tilde h_0^{q',p'}>0$ for $N$ large (recall \eqref{eq:prfDNLSI:qq'pp'} again and $\tilde \bh$ is locally Lipschitz). Combined with \eqref{def:tildebh:bis} and with the strict convexity of $\cL$, this implies
\begin{equation}\label{eq:ineq:pcL'}
\cL'(\tilde h_1^{q',p'})\:<\:p'\:<\:\cL'(\tilde h_0^{q',p'} + \tilde h_1^{q',p'})\;.
\end{equation}
In particular there is some constant $C_\cntc>0$ such that $\cL'(\gd_1)-p'\geq C_{\arabic{cst}}$ and $p'+\cL'(\gd_2)\geq C_{\arabic{cst}}$ uniformly in $q\in[q_1,q_2]$, $p\in[p_1,p_2]$ and $N\in\N$ (recall that those functions are continuous, and $\cL'(\tilde h^{q',p'}_1)=-\cL'(-\tilde h^{q',p'}_1)$).

Fix some $a,C>0$. For any $\gd$ such that $|\gd|<\gb/2$, and any $0<t<\gb/2+\gd$, $k\in\N$, Markov's inequality implies that
\begin{equation}
\tilde \Pb_\gd(X_k\leq ak-C) \;\leq\; \tilde \bE_\gd\big[e^{t(ak-C)-tX_k}\big] \;=\; e^{-Ct}\, e^{k(\tilde\cL_\gd(-t)+at)}\;,
\end{equation}
where $\tilde\cL_\gd(-t):=\log\tilde\bE_\gd[e^{-tX_1}]=\cL(\gd-t)-\cL(\gd)$ for any $t\in(-\gb/2+\gd,\gb/2+\gd)$. Choosing $a=\frac34\cL'(\gd_1)+\frac14p'$ and writing a Taylor expansion of $\cL$, there is some (uniform) $c>0$ such that for $t$ small,
\begin{equation}
\tilde\cL_{\gd_1}(-t)+at \;\leq\; \frac t4 \big(p'-\cL'(\gd_1)\big) +ct^2\;.
\end{equation}
Recall that $p'-\cL'(\gd_1)<0$, and it is even bounded away from 0 (uniformly). So for $t$ sufficiently small, we have $\tilde\cL_{\gd_1}(-t)+at \;\leq\; \frac t8 \big(p'-\cL'(\gd_1)\big)$. 
Hence there exists a $t_0>0$ such that,
\begin{equation}\label{eq:prfDNLSI:tilt1}\begin{aligned}
\tilde \Pb_{\gd_1} \big(\exists\, k\geq1, X_k\leq ak-C\big) \;&\leq\; \sum_{k\geq1} \tilde \Pb_{\gd_1}(X_k\leq ak-C) \\
&\leq\; e^{-Ct_0} \frac{e^{t_0(p'-\cL'(\gd_1))/8}}{1-e^{t_0(p'-\cL'(\gd_1))/8}} \;\leq\; C_\cntc\, e^{-Ct_0} \; ,
\end{aligned}\end{equation}
where we bounded the fraction by some constant uniform in $(q',p')$.

Similarly, we have
\begin{equation}
\tilde \Pb_\gd(X_k\geq bk+C) \;\leq\; \tilde \bE_\gd\big[e^{wX_k-w(bk+C)}\big] \;=\; e^{-Cw}e^{k(\tilde\cL_\gd(w)-bw)}\;,
\end{equation}
for some $b,C>0$ fixed, and any $k\in\N$, $\gd$ such that $|\gd|<\gb/2$, and $0<w<\gb/2-\gd$. Because $\tilde\cL_\gd(w)\sim  \cL'(\gd)w$ as $w\to0$, we fix some $b>\sup\{\cL'(\gd),\,\gd\in[\ul \gd, \ol \gd] \}$, and for some $w_0>0$ sufficiently small we have
\begin{equation}\label{eq:prfDNLSI:tilt2}
\tilde \Pb_{\gd_1} \big(\exists\, k\geq1, X_k\geq bk+C\big) \;\leq\; e^{-Cw_0} \frac{e^{\tilde\cL_{\gd_1}(w_0)-bw_0}}{1-e^{\tilde\cL_{\gd_1}(w_0)-bw_0}}
\;\leq\; C_\cntc e^{-C w_0} \;,
\end{equation}
where we bounded the fraction by some uniform constant (notice that we could pick $w_0>0$ such that $\tilde\cL_\gd(w_0)-bw_0<0$ for all $\gd\in[\ul \gd, \ol \gd]$).

Finally, we estimate the first factor in \eqref{eq:dcpDNLSI:3} by constraining the trajectory $X$ to have a first step $X_1=2K+K_S$, then to remain between the two curves $a(k-1)+K+K_S$ and $b(k-1)+3K+K_S$, $k\geq2$. Moreover for $\gga>0$ sufficiently small, we have
\begin{equation}\label{eq:prfDNLSI:apgga}
a-(p+\gga) \;=\; \frac34(\cL'(\gd_1)-p') -\gga + (p'-p) \;>\;0\;,
\end{equation}
(recall that $p'-p=O(N^{-2/3})$ and $\cL'(\gd_1)-p'$ is uniformly bounded away from 0), and this $\gga$ can be chosen independently of $K_S$. This means that our constraint implies that $X_k$ remains above $\tilde f(k)=\gga \pt k + k\pt p + K_S$ for all $1\leq k \leq \uN$. It also implies $\gL_\uN\in Q_{1,\uN}$ by setting $a_1=a=\frac34\cL'(\gd_1)+\frac14p'$ and $b_1=b$ in the definition of $Q_{1,\uN}$. Recollecting \eqref{eq:prfDNLSI:tilt1} and \eqref{eq:prfDNLSI:tilt2}, we finally obtain
\begin{align}\label{eq:prfDNLSI:tilt3}
\nonumber &\tilde\Pb_{\gd_1}(X_k\geq \tilde f(k), \forall\, 0<k<\uN\,;\gL_\uN\in Q_{1,\uN} ) \\
\nonumber &\quad \geq \tilde\Pb_{\gd_1}\big(X_1=2K+K_S\,;\, a(k-1)+K+K_S\leq X_k \leq b(k-1)+3K+K_S, \forall\, k\geq 2\big)\\  
\nonumber &\quad = \tilde\Pb_{\gd_1}\big(X_1=2K+K_S\big) \times \tilde\Pb_{\gd_1}\big(ak-K\leq X_k \leq bk+K, \forall\, k\geq 1\big)\\
&\quad \geq \tilde\Pb_{\gd_1}\big(X_1=2K+K_S\big) \times \big(1- C_\cntc e^{-K\min(t_0,w_0)}\big)
 \;\geq\; C_\cntc\;>\;0\;.
\end{align}
where we choose $K>0$ such that the second factor in \eqref{eq:prfDNLSI:tilt3} is greater than $\frac12$, then bound $\tilde \Pb_{\gd_1}(X_1=2K+K_S)$ uniformly over $\gd_1\in[\ul \gd, \ol\gd]$. We can reproduce the same proof to handle the other factor in \eqref{eq:dcpDNLSI:3}, by setting $a_2=\frac34\cL'(\gd_2)-\frac14p'$ and a convenient $b_2>a_2$ in the definition of $Q_{2,\uN}$. Therefore \eqref{eq:dcpDNLSI:3} is bounded from below by some uniform positive constant, and this concludes the proof of the proposition (subject to Lemma~\ref{lem:prfDNLSI}).
\end{proof} 

\smallskip 
Let us now prove Lemma~\ref{lem:prfDNLSI}. This result relies on estimates that are  similar to those displayed at the end of the proof of Proposition~\ref{prop:DNLSI}, where we used Markov inequalities for $\tilde\Pb_\gd$. First we prove an upper bound on the moment generating function of the difference between the random walk with law $\bP_{N',\bh_{N'}^{q,p}}$ and the straight line with slope $p$.
\begin{lemma}\label{lem:prfDNLSI:markov} There exists $\gl_0>0$ such that for any $\gl\in(0,\gl_0)$, there are $c>0$ and $N_0\in\N$ such that
\begin{equation}
\bE_{N,\bh_{N}^{q,p}}\Big[e^{-\gl (X_k-pk)}\Big]\;\leq\; e^{-ck} \;,
\end{equation}
uniformly in $0\leq k\leq N$, $N\geq N_0$, $q\in[q_1,q_2]$, $p\in[p_1,p_2]$ satisfying $p\leq 2q-\eps$.
\end{lemma}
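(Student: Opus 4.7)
The plan is to exploit the factorization of the tilted Laplace transform as a product over independent tilted increments, then apply a Taylor expansion of $\cL$ at each factor. By \eqref{eq:tilth:dvlpmt}, under $\Pb_{N,\bh_N^{q,p}}$ the increments $\xi_j := X_j - X_{j-1}$ are independent, with $\xi_j$ distributed as $\Pbb$ further tilted by the parameter $t_j^N := (1-\tfrac{j}{N}) h_{N,0}^{q,p} + h_{N,1}^{q,p}$, so that
\[
\bE_{N,\bh_N^{q,p}}\!\left[e^{-\gl (X_k-pk)}\right] \;=\; \prod_{j=1}^k \exp\!\Big(\cL(t_j^N - \gl) - \cL(t_j^N) + \gl p\Big)\,.
\]
As noted just before the proof of Proposition~\ref{prop:DNLSI}, the condition $p\leq 2q-\eps$ forces $\tilde h_0^{q,p}>0$ uniformly in $(q,p)\in[q_1,q_2]\times[p_1,p_2]$, and combined with \eqref{prop:tildehhnq} this guarantees that for $N\geq N_0$ all the $t_j^N$ remain in a fixed compact subset of $(-\gb/2,\gb/2)$ on which $\cL$ and its derivatives are uniformly bounded; this fixes the threshold $\gl_0>0$.

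For $\gl\in(0,\gl_0)$, a Taylor expansion of $\cL$ at $t_j^N$ gives $\cL(t_j^N-\gl)-\cL(t_j^N)+\gl p = \gl(p-\cL'(t_j^N)) + O(\gl^2)$, with a remainder uniform in $j$. Denoting $s_j^N := \cL'(t_j^N)$, and invoking \eqref{prop:tildehhnq} together with a Riemann sum approximation, one has $\frac1N \sum_{j=1}^k s_j^N = F(k/N) + O(N^{-1})$ uniformly in $k$, where
\[
F(t):=\int_0^t \cL'\big((1-u)\tilde h_0^{q,p}+\tilde h_1^{q,p}\big)\,\dd u\,.
\]
From \eqref{def:tildebh} one reads $F(0)=0$ and $F(1)=p$, and $F$ is strictly concave on $[0,1]$ since $\cL''>0$ and $\tilde h_0^{q,p}>0$. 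By compactness of parameters and uniform strict concavity, there exists $c_0>0$ such that $F(t)-tp \geq c_0\,t(1-t)$ for every $t\in[0,1]$ and every $(q,p)$ in the relevant range.

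Returning to discrete sums, this yields $\sum_{j=1}^k(s_j^N-p)\geq c_0\,k(1-\tfrac kN) - C$ for some uniform $C>0$. Injecting this in the Taylor expansion, the exponent of the Laplace transform is bounded above by $-\gl c_0 k(1-\tfrac kN) + C'\gl^2 k+C''$; shrinking $\gl_0$ if necessary to absorb the quadratic correction leaves a net decay of order $\gl c_0 k(1-k/N)/2$. For $k\leq N/2$ this directly delivers the claimed bound $e^{-ck}$ with $c=\gl c_0/4$. The main obstacle is the regime $k$ close to $N$, where the concavity bound $k(1-k/N)$ degenerates at the right endpoint: the plan is to exploit a time-reversal symmetry through the reversed walk $Y_m:=X_N-X_{N-m}$, which under $\Pb_{N,\bh_N^{q,p}}$ is again a tilted walk with tilts $\tfrac{j-1}{N} h_{N,0}^{q,p}+h_{N,1}^{q,p}$ (now increasing in $j$), and to run the symmetric concavity argument on $\bE[e^{\gl(Y_{N-k}-p(N-k))}]$ together with sharp joint control of $(X_N, Y_{N-k})$ so as to convert the decay rate $N-k$ obtained on the reversed side back into a decay rate in $k$.
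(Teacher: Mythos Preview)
Your factorization, Taylor expansion, and concavity bound $F(t)-tp\geq c_0\,t(1-t)$ are all correct and deliver the claim for $k\leq N/2$; this is essentially the same mechanism as in the paper, which instead uses the convexity inequality $\cL(h)-\cL(h-\gl)\geq\gl\,\cL'(h-\gl)$ together with the observation that running averages of the decreasing sequence $\cL'(h_N^i-\gl)-p$ are themselves decreasing, so as to reduce to $k=N$. The genuine gap is your time-reversal plan for $k$ near $N$: it cannot be completed, because the stated inequality is in fact \emph{false} at $k=N$. Since $\sum_{i=1}^N\cL'(h_N^i)=pN$ exactly (this is the defining relation $\partial_{h_1}\big[\tfrac1N\cL_{\gL_N}\big](\bh_N^{q,p})=p$ from \eqref{def:hnq}),
\[
\log\bE_{N,\bh_N^{q,p}}\!\big[e^{-\gl(X_N-pN)}\big]
=\sum_{i=1}^N\Big(\cL(h_N^i-\gl)-\cL(h_N^i)+\gl\,\cL'(h_N^i)\Big)\;\geq\;0
\]
by convexity of $\cL$; hence the expectation is $\geq 1$ and cannot be bounded by $e^{-cN}$ for any $c>0$. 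Your own expansion already exhibits this: the exponent behaves like $-\gl c_0\,k(1-k/N)+O(\gl^2)\,k$, which is positive throughout a window of width of order $\gl N$ near $k=N$. No identity relating $X_k$ to the reversed walk can manufacture a decay in $k$ that is simply not there.

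The paper's own proof shares this defect: its inequality \eqref{eq:lemprfDNLSImarkov:2} asserts $\tfrac1N\sum_{i=1}^N\cL'(h_N^i-\gl)-p\geq c>0$, but the Riemann limit of the left-hand side is $\int_0^1\cL'(x\tilde h_0^{q,p}+\tilde h_1^{q,p}-\gl)\,\dd x-p$, which is strictly \emph{negative} since $\cL'$ is increasing and $\gl>0$. What both arguments genuinely establish is the weaker bound $\log\bE_{N,\bh_N^{q,p}}[e^{-\gl(X_k-pk)}]\leq -\gl c_0\,k(N-k)/N+C\gl^2 k+C'$. This suffices for $k\leq(1-\eta)N$ with any fixed $\eta>0$, but the range $k$ close to $N'$ in the application to Lemma~\ref{lem:prfDNLSI} requires a separate argument---for instance, retaining the endpoint constraint $X_{N'}=p'N'$ contained in $\{\gL_{N'}=(q'N',p'N')\}$ and bounding the tail increments $\sum_{i>k}(X_i-X_{i-1})$ directly.
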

\begin{proof}
Under the law $\bP_{N,\bh_{N}^{q,p}}$, the increments of the random walk $X$ are still independent (but no more identically distributed), so we define $h_N^i:=(1-i/N)h_{N,0}^{q,p}+h_{N,1}^{q,p}$, the tilt parameter for the increment $X_i-X_{i-1}$, $i\geq1$. Then we have
\begin{equation}\label{eq:lemprfDNLSImarkov:1}\begin{aligned}
\log \bE_{N,\bh_{N}^{q,p}}\big[e^{-\gl (X_k-pk)}\big] \;&=\; \gl\pt p\pt k +\sum_{i=1}^k \big(\cL(-\gl+h_N^i)-\cL(h_N^i)\big)\\
&=\; -\gl \sum_{i=1}^k \bigg(\frac{\cL(h_N^i) - \cL(-\gl+h_N^i)}{\gl} -p\bigg)\\
&\leq\; -\gl \sum_{i=1}^k \Big(\cL'(-\gl+h_N^i) -p\Big)\;,
\end{aligned}\end{equation}
where we used the convexity of $\cL$, and the r.h.s. is well-defined for all $|\gl|<\gl_0$, for some $\gl_0>0$ uniform in $(q,p)$ (recall that $\bh_{N}^{q,p}$ is contained in a compact subset of $\cD_{\gb,n}$). Notice that $h_N^i$ is affine, decreasing in $i$ (recall that $p\leq 2q-\eps$ implies $\tilde h_0^{q,p}>0$), and recall that $\cL'$ is increasing. Therefore it suffices to prove the claim for $k=N$, that is to prove
\begin{equation}\label{eq:lemprfDNLSImarkov:2}
\frac 1N \sum_{i=1}^N \cL'(-\gl+h_N^i) \,-\, p\;\geq\;c\,>\,0\;,
\end{equation}
and we conclude by noticing that if $(u_n)_{n\geq 1}$ is a decreasing sequence, then $(\frac 1n \sum_{i=1}^nu_i)_{n\geq 1}$ is decreasing too, hence the claim holds for $k\leq N$.

Notice that the first term in the l.h.s. of \eqref{eq:lemprfDNLSImarkov:2} is a Riemann sum, and we have
\begin{equation}\label{eq:lemprfDNLSImarkov:3}
\frac 1N \sum_{i=1}^N \cL'(-\gl+h_N^i) \underset{N\to\infty}{\longrightarrow} \int_0^1 \cL'\big(x\tilde h_0^{q,p} + \tilde h_1^{q,p} - \gl\big)\dd x\;.
\end{equation}
We also claim that the convergence holds uniformly in $q\in[q_1,q_2]$, $p\in[p_1,p_2]$ ---this follows from a Riemann-sum approximation of the r.h.s., from \eqref{prop:tildehhnq} and from the fact that $\cL'$ is locally Lipschitz. Moreover we have $p=\int_0^1 \cL'(x\tilde h_0^{q,p} + \tilde h_1^{q,p})\dd x$ (recall \eqref{def:tildebh}), so this concludes the proof of \eqref{eq:lemprfDNLSImarkov:2} for $N$ sufficiently large and some $c>0$ uniform in $q\in[q_1,q_2]$, $p\in[p_1,p_2]$.
\end{proof}

\begin{proof}[Proof of Lemma~\ref{lem:prfDNLSI}]
Let us bound from above
\[
\Pbbzero\big(X_k < \tilde f(k+\uN)-x_1\, ;\, \gL_{N'}=(q'N',p'N')\big)\;,
\]
for any $0< k<N'$, where we recall $x_1\in[a_1(\uN-1) + K + K_S, b_1(\uN-1) + 3K + K_S]$, $\uN=N^{1/3}$, $N'=N-2\uN$ and $a_1$ satisfies \eqref{eq:prfDNLSI:apgga} with $a=a_1$. We have
\begin{equation}\begin{aligned}
&\Pbbzero\big(X_k < \tilde f(k+\uN)-x_1\, ;\, \gL_{N'}=(q'N',p'N')\big)\\ &\quad\leq\; \Pbbzero\big(X_k < -\gz\uN+p\pt k + \gga (k\wedge(N'-k)) -K + a_1\, ;\, \gL_{N'}=(q'N',p'N')\big)\;,
\end{aligned}\end{equation}
where we define $\gz := a_1-p-\gga$ (which is positive and bounded away from 0 uniformly in $p$). Recalling \eqref{def:Pnh} and applying Markov's inequality, we have
\begin{align}
\nonumber&\Pbbzero\big(X_k < \tilde f(k+\uN)-x_1\, ;\, \gL_{N'}=(q'N',p'N')\big) \\
\nonumber&\quad \leq\; e^{-N' \bh_{N'}^{q',p'}\cdot\,(q',p') + \cL_{\gL_{N'}}(\bh_{N'}^{q',p'})} \\
\nonumber&\qquad \times \Pb_{N', \bh_{N'}^{q',p'}}\big(X_k < -\gz\uN+p\pt k + \gga (k\wedge(N'-k)) -K + a_1\, ;\, \gL_{N'}=(q'N',p'N')\big)\\
\nonumber&\quad \leq\; e^{-N' \bh_{N'}^{q',p'}\cdot\,(q',p') + \cL_{\gL_{N'}}(\bh_{N'}^{q',p'})} \\
&\qquad \times e^{-\gl\gz\uN + \gl\gga \,(k \wedge (N'-k))+\gl(a_1-K)}\, \bE_{N', \bh_{N'}^{q',p'}}\big[e^{- \gl(X_k-p'k)} \big]e^{\gl(p-p')k}\;,
\end{align}
for some $\gl\in(0,\gl_0)$. Applying Lemma~\ref{lem:prfDNLSI:markov} and the inequality $k\wedge (N'-k)\leq k$, we obtain
\begin{equation}\begin{aligned}
&\Pbbzero\big(X_k < \tilde f(k+\uN)-x_1\, ;\, \gL_{N'}=(q'N',p'N')\big) \\
&\quad \leq \; e^{-N' \bh_{N'}^{q',p'}\cdot\,(q',p') + \cL_{\gL_{N'}}(\bh_{N'}^{q',p'})} e^{-\gl\gz\uN + \gl\gga k+\gl(a_1-K)} e^{-c\pt k} e^{\gl(p-p')k}\;.
\end{aligned}\end{equation}
Choosing $\gga<c/\gl$ (independently of $K_S$), and recalling that $p-p'$ goes uniformly to 0 (see \eqref{eq:prfDNLSI:qq'pp'}), there is $N_0\in\N$ such that $\gl\gga-c+\gl(p-p')\leq-C_\cntc<0$ uniformly in $(q,p)$ and $N\geq N_0$. Therefore we have
\begin{align}
\nonumber&\Pbbzero\big(X_k\geq \tilde f(k+\uN)-x_1, \forall\, 0<k<N'\,;\gL_{N'}=(q'N',p'N')\big) \\
\nonumber&\quad \geq\; \Pbbzero\big(\gL_{N'}=(q'N',p'N')) - \sum_{k=1}^{N'} \Pbbzero(X_k< \tilde f(k+\uN)-x_1\,;\, \gL_{N'}=(q'N',p'N')\big)\\
\nonumber&\quad \geq\; e^{-N' \bh_{N'}^{q',p'}\cdot\,(q',p') + \cL_{\gL_{N'}}(\bh_{N'}^{q',p'})} \\
& \qquad \times \Big(\Pb_{N',\bh_{N'}^{q',p'}}(\gL_{N'}=(q'N',p'N'))- e^{-\gl\gz \uN}e^{\gl(a_1-K)}\sum_{k=1}^{N'} e^{-C_{\arabic{cst}}k}
\Big)\;,
\end{align}
and $\sum_{k\geq1} e^{-C_{\arabic{cst}}k}<\infty$, which concludes the proof.
\end{proof} 

\appendix
\section{The wetting model}\label{app:wetting}
In this appendix we provide well-known results and estimates on the wetting model which are proven in \cite{Giac07}. Recall that we defined $\Pbb$ in \eqref{def:Pgb}, and for all $N\geq1$, let
\begin{equation}\label{def:Zwetting}
Z_{\wet,N}^{\gb,\gd} \;:=\; \Ebbzero\left[e^{\gd\sum_{k=1}^N\ind_{\{X_k=0\}}}\ind_{\{X\in B^{0,+}_N\}}\right]\;,
\end{equation}
be the (constrained) partition function of a wetting model associated with a random walk of law $\Pbb$. Let $h_\gb(\gd)$ be its free energy ---recall \eqref{wetmod}, and notice that it is well-defined because $(Z_{\wet,N}^{\gb,\gd})_{N\geq1}$ is super-multiplicative. Note that $h_\gb(\gd)$ is non-decreasing in $\gd$, that $h_\gb(\gd)\leq\gd$, and recalling \eqref{eq:asymp:PosBB}, we have $h_\gb(0)=0$; hence $h_\gb(\gd)\geq0$ for all $\gd\geq0$, and we define $\tilde\gd_c(\gb):=\inf\{\gd\geq0,h_\gb(\gd)>0\}\in[0,\infty]$.

Let $\tau:=\inf\{t\geq1; X_t\leq0\}$ and let $K_\gb(t):=\Pbbzero(\tau=t,X_t=0)$, $t\geq1$. 
Notice that conditionally to $\tau$, $-X_\tau$ follows a geometric distribution on $\N_0$ (in particular $\tau$ and $X_\tau$ are independent). Therefore one has $\sum_{t\geq1} K_\gb(t)=\Pbbzero(X_\tau=0)=1-e^{-\gb/2}$, and $K_\gb(t) = \Pbbzero(\tau=t)(1-e^{-\gb/2})$.
It is a  straightforward application of \cite[(4.5)]{CC13}
that there exists $c>0$ depending on $\beta$ only such that 
\begin{equation}\label{equivexc}
K_\beta(t)\,\sim\, \frac{c}{t^{3/2}}\qquad\text{as}\quad t\to \infty\;.
\end{equation}  

\begin{prop}[{\cite[Thm.~2.2 \& Eq. 2.2]{Giac07}}]\label{prop:wetpoint}~

(i) For $\gb>0$, the free energy of this wetting model $h_\gb(\gd)$ is the only solution in $\gz\geq0$ of the equation
\begin{equation}\label{eq:prop:wetpoint}
\sum_{t\geq1} K_\gb(t)e^{-\gz t} \;=\; e^{-\gd}\;,
\end{equation}
if it exists (that is if $\gd>\tilde \gd_c(\gb)=-\log (1-e^{-\gb/2})$), and $0$ otherwise. An explicit expression of $h_\gb(\gd)$ is given by
\begin{equation}\label{eq:explicit:h}
h_\gb(\gd) \;=\;  \log\bigg( \frac{(e^\gd-1)(1-e^{-\gb/2})^2}{1-e^{-\gd}-e^{-\gb}} \bigg) \;,\qquad \text{for}\quad \gd > -\log (1-e^{-\gb/2})\;,
\end{equation}
and $0$ otherwise.

(ii) The asymptotics of the partition function are given by:\\
($a$) if $\gd>\tilde \gd_c(\gb)$, then as $N\to\infty$, \[Z_{\wet, N}^{\gb,\gd} \,\sim\, C_\wet e^{h_\gb(\gd) N}\qquad \text{where}\quad C_\wet\,:=\,\bigg(e^{\gd} \sum_{t\geq1} t\pt K_\gb(t) e^{-h_\gb(\gd)t}\bigg)^{-1}\, ;\]\\
($b$) if $\gd < \tilde \gd_c(\gb)$, then as $N\to\infty$
\[ Z_{\wet, N}^{\gb,\gd} \,\sim\, \frac{c\, e^{-\gd}}{\big(e^{-\gd}-1+e^{-\gb/2}\big)^2} N^{-3/2} \,;\]
($c$) if $\gd = \tilde \gd_c(\gb)$, then as $N\to\infty$,
\[ Z_{\wet, N}^{\gb,\pt \tilde \gd_c(\gb)} \,\sim\, \frac{1-e^{-\gb/2}}{2\pt\pi \pt c } \pt N^{-1/2} \,.\]
\end{prop}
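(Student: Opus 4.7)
The plan is to exploit the renewal structure of the wetting model. Decomposing a trajectory $X \in B_N^{0,+}$ according to its successive returns to the wall (at each of which the pinning reward $e^{\gd}$ is collected), one obtains the representation
\[
Z_{\wet,N}^{\gb,\gd} \;=\; \sum_{r=1}^{N} e^{r\gd} \!\!\sumtwo{t_1,\ldots,t_r\geq 1}{t_1+\cdots+t_r=N} \prod_{i=1}^r K_\gb(t_i),
\]
from which both assertions follow by standard renewal-theoretic analysis.

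For $(i)$, we introduce the tilted weight $Q_h(t) := K_\gb(t) e^{\gd - h t}$ and seek $h \geq 0$ such that $Q_h$ is a probability on $\N$, i.e., $\sum_t K_\gb(t) e^{-h t} = e^{-\gd}$. Since $h \mapsto \sum_t K_\gb(t) e^{-h t}$ is strictly decreasing, continuous, and equals $1 - e^{-\gb/2}$ at $h=0$ (using the value of $\sum_t K_\gb(t)$ recalled just above the proposition), this equation admits a unique positive solution precisely when $e^{-\gd} < 1 - e^{-\gb/2}$, i.e., $\gd > -\log(1-e^{-\gb/2})$; otherwise we take $h=0$. Identifying this $h$ with the free energy defined by \eqref{wetmod} is then immediate from the geometric-series identity $\sum_N Z_{\wet,N}^{\gb,\gd} z^N = e^{\gd} F(z)/(1-e^{\gd} F(z))$, with $F(z) := \sum_t K_\gb(t) z^t$: the generating function in $z$ has radius of convergence $e^{-h_\gb(\gd)}$. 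The explicit formula \eqref{eq:explicit:h} then follows by computing $F(z)$ in closed form---feasible because the memoryless (geometric) structure of $\bP_\gb$ renders the relevant first-passage generating function explicitly computable---and solving $e^{\gd} F(e^{-h}) = 1$ for $h$.

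For $(ii)$, we set $Q(t) := K_\gb(t) e^{\gd - h_\gb(\gd) t}$ and rewrite $Z_{\wet,N}^{\gb,\gd} = e^{h_\gb(\gd) N} \sum_{r\geq 1} Q^{*r}(N)$; the three cases correspond to three regimes of $Q$. In case $(a)$ ($\gd > \tilde\gd_c(\gb)$), $Q$ is a proper probability with finite mean $\mu$, since the exponential tilt $e^{-h_\gb(\gd) t}$ (with $h_\gb(\gd) > 0$) dominates the polynomial tail $K_\gb(t) \sim c\pt t^{-3/2}$; the classical renewal theorem then gives $\sum_r Q^{*r}(N) \to 1/\mu = C_\wet$. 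In case $(b)$ ($\gd < \tilde\gd_c(\gb)$), $Q$ is a subprobability of total mass $p = e^{\gd}(1-e^{-\gb/2}) < 1$ with subexponential tail $Q(t) \sim c e^{\gd} t^{-3/2}$, and the classical one-big-jump principle yields $\sum_r Q^{*r}(N) \sim Q(N)/(1-p)^2$, which simplifies algebraically to the stated asymptotic. In case $(c)$ ($\gd = \tilde\gd_c(\gb)$), $Q$ is a proper probability with infinite mean and tail regularly varying of index $1/2$, so the Garsia--Lamperti renewal theorem gives $\sum_r Q^{*r}(N) \sim \frac{1-e^{-\gb/2}}{2\pi c}\pt N^{-1/2}$, which is exactly the stated asymptotic.

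The only non-routine step is the closed-form computation of $F(z)$ underlying \eqref{eq:explicit:h}; everything else is a direct application of classical renewal theory, which is presumably why the authors simply refer to \cite{Giac07}.
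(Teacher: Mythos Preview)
Your proposal is correct and follows the same renewal-theoretic route as the paper: decompose $Z_{\wet,N}^{\gb,\gd}$ according to returns to the wall, identify $h_\gb(\gd)$ via the equation $\sum_t K_\gb(t)e^{-\gz t}=e^{-\gd}$, and read off the asymptotics of the renewal mass function in the three regimes. The paper simply cites \cite[Thm.~2.2 and (2.2)]{Giac07} for all of this, whereas you spell out which classical results (renewal theorem, defective-subexponential asymptotics, Garsia--Lamperti) are being invoked; there is no substantive difference in strategy.

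The one place the paper is more concrete than you is the closed-form computation behind \eqref{eq:explicit:h}. You write that the geometric increment law makes the first-passage generating function ``explicitly computable'' but do not say how. The paper gives the actual mechanism: for $\gz>0$, the process $(e^{-aX_n-\gz n})_{n\geq 0}$ is a $\Pbb$-martingale when $a>0$ solves $\Ebbzero[e^{-aX_1-\gz}]=1$, and optional stopping at $\tau$ (together with the fact that $-X_\tau$ is geometric and independent of $\tau$) yields $\Ebbzero[e^{-\gz\tau}]$ in closed form. This is the missing ingredient that turns your ``feasible'' into an actual formula.
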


Recall \eqref{equivexc} for the definition of $c$. The first claim follows from \cite[(2.2)]{Giac07} and above observations regarding $K_\gb$. The explicit formula for $h_\gb(\gd)$ is obtained by computing
\[
\sum_{t\geq1} K_\gb(t)e^{-\gz t} =(1-e^{-\beta/2})\,  \Ebbzero[e^{-\gz\tau}] =1-e^a\, e^{-\beta/2}\,,
\]
for some $a>0$, thanks to a stopping time argument and with the observation that $(e^{ -a  X_n - \gz n})_{n\geq 0}$ is a martingale when $a>0$ is the unique solution of $\Ebbzero[e^{-a X_1-\gz}]=1$; details are left to the reader.
 The asymptotics of $Z_{\wet, N}^{\gb,\gd}$ as $N\to\infty$ are given in \cite[Thm.~2.2]{Giac07}, where we recall \eqref{equivexc}.

\section{An {\rm FKG} inequality}\label{app:FKG}
We provide here a (conditional) {\rm FKG} inequality for random walks of length $N\in\N$ with increment distributed as $\Pbb$, similarly to more general, already well-known {\rm FKG} inequalities as developed in \cite{P74}. Define a partial order on $\bbZ^{N}$: for all $u,v\in\bbZ^N$, $u\preceq v$ if and only if $u_k\leq v_k$ for all $1\leq k\leq N$. A function $f:\bbZ^N\to\R$ is \emph{non-decreasing} (resp. \emph{non-increasing}) if for all $u,v\in\bbZ^N$, $u\preceq v$ implies $f(u)\leq f(v)$ (resp. $f(u)\geq f(v)$). Define also for every $u,v\in\bbZ^{N}$,
\begin{equation}\begin{aligned}
u\wedge v &:= (\min(u_1,v_1),\ldots, \min(u_N,v_N))\,,\\
u\vee v &:= (\max(u_1,v_1),\ldots, \max(u_N,v_N))\,.
\end{aligned}\end{equation}
\begin{prop}\label{prop:FKG}
Let $A\subseteq \bbZ^N$ be such that \begin{itemize}
\item $\Pbbzero(A)>0$,
\item for any $u,v\in A$, one has $u\wedge v\in A$ and $u\vee v \in A$.
\end{itemize}
Let $f,g : A\to \R$ be two non-increasing (or two non-decreasing), non-negative and bounded functions. 
Then,
\begin{equation}
\Ebbzero\big[f(X)\pt g(X)\pt\big|\pt X\in A\big] \;\geq\; \Ebbzero\big[f(X)\pt\big|\pt X\in A\big] \pt \Ebbzero\big[g(X)\pt\big|\pt X\in A\big]\;.
\end{equation}
\end{prop}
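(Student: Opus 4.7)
The plan is to cast the statement as a direct application of the classical FKG inequality on the distributive lattice $(\bbZ^N,\wedge,\vee)$. Let $\nu$ denote the probability measure on $A$ obtained by conditioning $\Pbbzero$ on $\{X\in A\}$. Since $A$ is stable under $\wedge$ and $\vee$ by assumption, it is a distributive sublattice of $\bbZ^N$, and the desired inequality is exactly the positive-correlation statement $\int fg\,d\nu\geq\int f\,d\nu\int g\,d\nu$. By the classical FKG theorem, it suffices to verify the lattice (log-supermodularity) condition
\[
\nu(u\wedge v)\,\nu(u\vee v)\;\geq\;\nu(u)\,\nu(v),\qquad\forall\, u,v\in A.
\]

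The heart of the proof is then a one-step inequality for the step distribution. Under $\Pbbzero$,
\[
\Pbbzero(X=x)\;=\;c_\gb^{-N}\prod_{i=1}^N e^{-(\gb/2)|x_i-x_{i-1}|},\qquad x\in\bbZ^N,\;x_0:=0,
\]
so, after cancelling common factors, the lattice condition reduces to the coordinate-wise bound
\[
|\!\min(b,d)-\!\min(a,c)|+|\!\max(b,d)-\!\max(a,c)|\;\leq\;|b-a|+|d-c|,\qquad a,b,c,d\in\bbZ,
\]
applied with $(a,b,c,d)=(u_{i-1},u_i,v_{i-1},v_i)$ for each $i=1,\ldots,N$. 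This one-step inequality is elementary: assuming $a\leq c$ without loss of generality, the case $b\leq d$ gives equality, while in the case $b>d$ a short enumeration of the orderings of $b$ and $d$ relative to $\{a,c\}$ shows that the difference between the two sides is always non-positive. Summing over $i$ yields $\Pbbzero(X=u\wedge v)\,\Pbbzero(X=u\vee v)\geq\Pbbzero(X=u)\,\Pbbzero(X=v)$, and multiplying this by the trivial bound $\mathbf 1_A(u\wedge v)\,\mathbf 1_A(u\vee v)\geq \mathbf 1_A(u)\,\mathbf 1_A(v)$ (which is precisely the $\wedge,\vee$-stability of $A$) proves the lattice condition for $\nu$.

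Once the lattice condition is in hand, the FKG theorem yields the inequality for any two bounded, non-decreasing $f,g$. The case of two non-increasing, bounded functions follows by applying the same result to $\tilde f:=\|f\|_\infty-f\geq 0$ and $\tilde g:=\|g\|_\infty-g\geq 0$ on $A$, and expanding the product to recover the claim. The main (minor) obstacle is the elementary coordinate-wise inequality above; everything else is a routine invocation of FKG plus bookkeeping. A small technical point is that $A$ may be infinite, but since $\nu$ is a probability measure and $f,g$ are bounded, FKG applies after truncating $A$ to $A\cap[-M,M]^N$ (which inherits the $\wedge,\vee$-stability) and passing to the limit $M\to\infty$ by dominated convergence.
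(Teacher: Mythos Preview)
Your proposal is correct and follows essentially the same route as the paper: both reduce the claim to the FKG lattice condition for the conditional law on $A$, and both verify this via the same elementary coordinate-wise inequality $|\min(b,d)-\min(a,c)|+|\max(b,d)-\max(a,c)|\leq |b-a|+|d-c|$ applied to consecutive coordinates of the random walk. The only cosmetic difference is that the paper invokes the two-measure version of Preston's theorem (with $\mu_1=\nu$ and $\mu_2$ the $g$-weighted version of $\nu$), which directly yields the inequality for non-increasing $f$, whereas you apply the single-measure FKG inequality to non-decreasing functions and then recover the non-increasing case by the complementation trick $\tilde f=\|f\|_\infty-f$; both are equally valid.
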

Note that this claim holds in particular for $A=B_N^{0,+}$ or $A=\bbZ^N$. 
\begin{proof}
We prove the proposition for $f$, $g$ both non-increasing. Let $u,v\in A$. For every subset $B\subseteq A$, let us define $\mu_1(B):=\Pbbzero(B\pt|\pt A)$, and 
\begin{equation*}
\mu_2(B):= \frac{\Ebbzero\big[g(X)\ind_{B}(X) \,\big|\,X\in A\big]}{\Ebbzero\big[g(X)\,\big|\,X\in A\big]}\;,
\end{equation*}
where we assume without loss of generality that $\Ebbzero\big[g(X)\,\big|\,X\in A\big]>0$ ---otherwise $g\equiv 0$ on $A$ and the proposition is trivial. Notice that $\mu_1$ and $\mu_2$ are probability measures on $A$. To apply an {\rm FKG} inequality from \cite{P74}, we must prove that for any $u,v\in A$,
\begin{equation}\label{eq:FKG:1}
\mu_1(u\vee v) \, \mu_2(u\wedge v) \geq \mu_1(u) \, \mu_2(v)\;.
\end{equation}
To that end we multiply the left hand side in \eqref{eq:FKG:1} by $\Ebbzero[g(X)|A]\, \times \Pbbzero(A)^2$, to obtain
\begin{equation}\label{eq:FKG:1.5}
\begin{aligned}
&\Ebbzero\big[g(X)\big|A\big]\, \Pbbzero(A)^2 \mu_1(u\vee v) \, \mu_2(u\wedge v) \\
& \quad = g(u\wedge v) \Pbbzero(X=u\vee v) \Pbbzero(X=u\wedge v) \\
& \quad = g(u\wedge v)\pt c_\gb^{-2N} \prod_{k=1}^N e^{-\frac{\gb}{2}|(u_k\vee v_k) - (u_{k-1}\vee v_{k-1})| }  \prod_{k=1}^N e^{-\frac{\gb}{2}|(u_k\wedge v_k) - (u_{k-1}\wedge v_{k-1})| }\:,
\end{aligned}\end{equation}
where $u_0=v_0:=0$. Moreover we claim that for any $a,b,c,d\in\R$,
\begin{equation}\label{calcwe}
|a\vee b - c\vee d| + |a\wedge b - c\wedge d| \leq |a-c| + |b-d|\;.
\end{equation}
To prove this, observe that $|a-c|=a+c-2(a\wedge c)$ for any $a,c\in\R$, hence \eqref{calcwe} is equivalent to
\[a\wedge c + b\wedge d \;\leq\; (a\vee b)\wedge (c\vee d) + (a\wedge b)\wedge (c\wedge d)\;.\]
Notice that $(a\wedge b)\wedge (c\wedge d) =\min\{a,b,c,d\}$ is either equal to $a\wedge c$ or $b\wedge d$. Assume $a\wedge b\wedge c\wedge d=a\wedge c$ without loss of generality. Then one obviously has $b\wedge d \leq (a\vee b)\wedge (c\vee d)$, which concludes the proof of \eqref{calcwe}.

Using \eqref{calcwe} in \eqref{eq:FKG:1.5} and recalling that $g$ is non-increasing, we conclude
\begin{equation*}\begin{aligned}
&\Ebbzero\big[g(X)\big|A\big]\, \Pbbzero(A)^2 \mu_1(u\vee v) \, \mu_2(u\wedge v) \\
& \quad \geq g(v)\, c_\gb^{-2N} \prod_{k=1}^N e^{-\frac{\gb}{2}|u_k - u_{k-1}| }  \prod_{k=1}^N e^{-\frac{\gb}{2}|v_k -v_{k-1}| }\\
& \quad \geq \Ebbzero\big[g(X)\big|A\big]\, \Pbbzero\big(A\big)^2 \mu_1(u) \, \mu_2(v)\;,
\end{aligned}\end{equation*}
which eventually proves \eqref{eq:FKG:1}.

Since $f$ is also a non-increasing function on $A$, we can use a generalized {\rm FKG} inequality claimed in \cite[Theorem~3]{P74} to obtain 
\begin{equation}
\int_{A} f \pt \dd \mu_1 \; \leq \int_{A} f \pt \dd \mu_2\;,
\end{equation}
which concludes the proof.
\end{proof}

\bibliographystyle{plain}
\bibliography{biblio.bib}

\end{document}